\newcommand{\eeq}{\end{equation}}
\patchcmd{\thebibliography}{\chapter*}{\section*}{}{}
\numberwithin{equation}{section} 
\newcommand{\lra}{\longrightarrow}
\newcommand{\bP}{\mathbb{P}}
\newcommand{\bM}{\mathbb{M}}
\newcommand{\bZ} {\mathbb{Z}}
\newcommand{\bC}{\mathbb{C}}
\newcommand{\bQ}{\mathbb{Q}}
\newcommand{\bQb}{\overline \bQ}
\newcommand{\cS}{\mathcal{S}}
\newcommand{\CL}{\mathcal{L}}
\newcommand{\cP}{\mathcal{P}}
\newcommand{\cO}{\mathcal{O}}
\newcommand{\cJ}{\mathcal{J}}
\newcommand{\cU}{\mathcal{U}}
\DeclareMathOperator{\GL}{GL}
\DeclareMathOperator{\tors}{tors}
\DeclareMathOperator{\Aff}{Aff}
\DeclareMathOperator{\Bir}{Bir}
\DeclareMathOperator{\Char}{char}
\DeclareMathOperator{\Prep}{Prep}
\DeclareMathOperator{\End}{End}
\newcommand{\Kbar}{ \overline{K}}
\newcommand{\R}{\mathbb R}
\newcommand{\bR}{\mathbb R}
\newcommand{\Q}{\mathbb{Q}}
\newcommand{\Qbar}{\overline{\Q}}
\newcommand{\F}{\mathbb{F}}
\newcommand{\fh}{\mathfrak{h}}
\newtheorem{thm}{Theorem}[section]
\newtheorem{prop}[thm]{Proposition}
\newtheorem{lem}[thm]{Lemma}
\newtheorem{cor}[thm]{Corollary}
\newtheorem{quest}[thm]{Question}
\theoremstyle{definition}
\newtheorem{remark}[thm]{Remark}
\newtheorem{example}[thm]{Example}
\title{A Tits alternative for rational functions}
\subjclass[2010]{Primary: 20M05 ~Secondary: 14H37, 20D15.}
\keywords{Tits alternative, semigroups, preperiodic points, height
  functions, rational functions, free semigroups}
\author{Jason P. Bell} \thanks{The first-named author thanks NSERC for
  its generous support.}  \address{ Department of Pure Mathematics,
  University of Waterloo, 200 University Ave. W., Waterloo, ON, N2L
  3G1, CANADA}\email{jpbell@uwaterloo.ca}
\author{Keping Huang}
\address{Department of Mathematics, Michigan State University, East
  Lansing, MI 48824, USA} \email{keping.huang@rochester.edu}
\author{Wayne Peng} \address{Department of Mathematics, University of
  Rochester, Rochester, NY 14627 USA}
\email{junwen.wayne.peng@gmail.com}
\author{Thomas J. Tucker}\thanks{The fourth-named author thanks the
  Mathematical Sciences Research Institute for
  its generous support.} 
\address{Department of Mathematics, University of Rochester,
  Rochester, NY 14627 USA} \email{thomas.tucker@rochester.edu}
\date{October 14, 2020}
\begin{document}

\begin{abstract} We prove an analog of the Tits alternative for
  rational functions.  In particular, we show that if $S$ is a
  finitely generated semigroup of rational functions over $\bC$, then
  either $S$ has polynomially bounded growth or $S$ contains a
  nonabelian free semigroup.  We also show if $f$ and $g$ are
  polarizable maps over any field of any characteristic and
  $\Prep(f) \not= \Prep(g)$, then there is a positive integer $j$ such that
  $\langle f^j, g^j \rangle$ is a free semigroup on two generators.  In the special case of
  polynomials, we are able to prove slightly stronger results.
\end{abstract}

\maketitle

%
%
%
%
\section{Introduction}
The Tits alternative \cite{Tits} is a celebrated result in the theory
of linear groups.  It says that a finitely generated linear group
contains either a solvable subgroup of finite index or a 
nonabelian free group.  In general, a group $G$ is said to satisfy the
\emph{Tits alternative} if each of its finitely generated subgroups
contains either a solvable subgroup of finite index or a nonabelian
free group.  Many classes of groups have now been shown to satisfy
the Tits alternative \cite{BFH, KOZ, Iva, Lam, McC}.

When one instead considers the structure of linear groups as
semigroups, an even stronger dichotomy is obtained.  A result of
Longobardi, Maj, and Rhemtulla \cite{LMR} (see also Milnor \cite{Mil}
and Wolf \cite{Wol}) combined with the Tits alternative implies that a
finitely generated linear group is either virtually nilpotent or
contains a nonabelian free semigroup.  Okni\'nski and Salwa \cite{Ok1}
later showed if $S$ is a finitely generated cancellative linear
semigroup, then either $S$ contains a nonabelian free semigroup or the
group generated by $S$ is virtually nilpotent.  Results from the
theory of growth of groups then give that the growth of a finitely
generated cancellative linear semigroup is either exponential or is
polynomially bounded.  The cancellativity condition here is crucial as
Okn\'inski \cite{Ok2} has also produced finitely generated
non-cancellative linear semigroups of intermediate growth (see also
\cite{Ok3}).

We note that a semigroup $S$ contains a nonabelian free semigroup if
and only if it contains a free semigroup on two generators.  As with
groups, it is not difficult to see that a free semigroup on two
generators must contain a free semigroup on $n$ generators for any
positive integer $n$.



We prove the following variant of the Tits alternative for semigroups
of rational functions over the complex numbers.  

\begin{thm}\label{rational}
  Let $\cS$ be a finitely generated semigroup
  of rational functions in $\bC(x)$.  Then either $\cS$ has polynomially
  bounded growth or $\cS$ contains a nonabelian free semigroup. 
\end{thm}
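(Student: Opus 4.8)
The plan is to argue by cases on the degrees of the generators of $\cS$, refining the analysis by the geometry of preperiodic points when a generator of degree $\ge 2$ is present. As a preliminary reduction I would note that since $\cS$ is finitely generated, all of its elements lie in $K(x)$ for some finitely generated $K\subseteq\bC$, so the canonical heights and equilibrium measures used below are available, either by specializing $K$ into $\Qbar$ or through Moriwaki heights over finitely generated fields. I would also use freely that it suffices to produce a free subsemigroup on two generators, and that the growth of $\cS$ in its own generators is dominated by its growth inside any larger generating set containing those generators.

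\emph{Degree-one case.} If every generator has degree $1$, then $\cS$ is a subsemigroup of $\mathrm{PGL}_2(\bC)$, and since every element is a bijection, composition is cancellative on both sides, so $\cS$ is a finitely generated cancellative linear semigroup. I would then invoke the Okni\'nski--Salwa theorem \cite{Ok1}: either $\cS$ contains a nonabelian free subsemigroup, and we are done, or the group generated by $\cS$ is virtually nilpotent, hence of polynomial growth by Bass--Guivarc'h (or Gromov), so $\cS$ has polynomially bounded growth.

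\emph{Higher-degree case.} Suppose some generator has degree $\ge 2$, and distinguish two subcases by looking at preperiodic loci. If there are two elements $h_1,h_2\in\cS$ of degree $\ge 2$ with $\Prep(h_1)\ne\Prep(h_2)$, then, since a rational map of degree $\ge 2$ on $\bP^1$ is polarizable, the polarizable-maps theorem stated in the abstract yields a $j$ with $\langle h_1^j,h_2^j\rangle$ free on two generators, a free subsemigroup of $\cS$. Otherwise all degree-$\ge 2$ elements of $\cS$ share a common preperiodic set $P$; by the rigidity theory of preperiodic points (Baker--DeMarco, Yuan--Zhang) they share a common equilibrium measure $\mu_\infty$, equivalently a canonical height $\hat h$ with $\hat h\circ\phi=(\deg\phi)\hat h$ for every degree-$\ge 2$ element $\phi\in\cS$. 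Composing a degree-one $\mu\in\cS$ with a degree-$\ge 2$ $h\in\cS$ remains in $\cS$ and has degree $\ge 2$, which forces $\mu^{\ast}\mu_\infty=\mu_\infty$; hence every element of $\cS$ normalizes the pair $(\mu_\infty,\hat h)$ up to the degree it applies. I would then split on the stabilizer $\Gamma\le\mathrm{PGL}_2(\bC)$ of $\mu_\infty$: if $\Gamma$ is finite, $\cS$ has only finitely many degree-one elements, and the degree-$\ge 2$ elements, all sharing $\mu_\infty$, commute up to $\Gamma$ after passing to iterates, so $\cS$ is governed by a finitely generated commutative monoid together with finite data and has polynomial growth; if $\Gamma$ is infinite, the common maps are conjugate to power maps, Chebyshev maps, or Lattès maps, and in each exceptional case $\cS$ embeds in an explicit semigroup of endomorphisms of $\mathbb{G}_m$ or of an elliptic curve, twisted by torsion and by finite automorphisms, each of which has polynomially bounded growth.

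I expect the main obstacle to be this last subcase: turning "common preperiodic set" into "polynomial growth." It requires making the preperiodic-point rigidity usable over finitely generated fields of arbitrary characteristic, controlling the degree-$\ge 2$ elements that share $\mu_\infty$ (an input from the Ritt--Eremenko--Pakovich theory of commuting and dynamically related maps), and the bookkeeping that upgrades "lies in $\Gamma$ and commutes up to iterates" to an honest polynomial bound on the growth function, including the descent to the toric or elliptic picture in the power/Chebyshev/Lattès cases. By contrast the degree-one case and the "distinct preperiodic loci" case are comparatively soft, using the cited semigroup-theoretic and dynamical results essentially as black boxes.
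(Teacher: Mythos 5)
Your overall architecture is the same as the paper's: handle the degree-one case with Okni\'nski--Salwa, use the polarizable-maps theorem (Theorem \ref{common}) when two elements of degree at least $2$ have different preperiodic sets, and then show that a common preperiodic set forces polynomially bounded growth, splitting into the non-special and special (power/Chebychev/Latt\`es) cases. But that last step is exactly where your argument has a genuine gap, and you have only gestured at it. In the non-special case, ``the degree-$\ge 2$ elements commute up to $\Gamma$ after passing to iterates, so $\cS$ is governed by a finitely generated commutative monoid together with finite data'' is not a proof and does not by itself bound $|S^{\le n}|$: what is actually needed is (a) a uniform bound on the number of elements of $\cS$ of each \emph{fixed} degree, and (b) the fact that all occurring degrees are powers of a single integer. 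The paper gets (a) by a delicate argument (Lemmas \ref{linfin}, \ref{derivative}, \ref{deg-bound}): one takes a common finite orbit inside the Julia set, compares multipliers at a periodic point, invokes Ye's rigidity (two maps with the same measure, the same degree, a common fixed point in the Julia set and equal multiplier there are equal), and uses that the field of definition of $\cS$ is finitely generated so contains only finitely many roots of unity; and (b) via Levin's relation $f^mg^n=f^{2m}$ (Lemma \ref{degree}). Note that distinct maps of the same degree can share a measure (e.g.\ $x^2$ and $\omega x^2$ with $|\omega|=1$), so some arithmetic input over a finitely generated field is unavoidable; also, relations of Levin type cannot simply be cancelled on the left, since these semigroups are not left cancellative.

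The special cases are also not as soft as you claim. In the Latt\`es case the semiconjugacies $\pi_g$ vary over a finite set of maps of degrees $2,3,4,6$, so $g\mapsto\theta_g$ is \emph{not} a semigroup homomorphism, and your asserted embedding of $\cS$ into a twisted endomorphism semigroup of an elliptic curve is unproved; worse, the ``finitely many elements per degree'' phenomenon genuinely fails for CM Latt\`es maps (the paper's Remark \ref{cx}), which is why the paper replaces the embedding idea by a counting argument: degrees occurring in $S^{\le n}$ number $O(n^s)$ and are supported on finitely many primes, and the number of elements of an imaginary quadratic order of a given norm, times finitely many units, torsion translations, and conjugating M\"obius maps, is polynomially bounded in $n$. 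Similar (easier) counting, again using finiteness of roots of unity in the field of definition together with the Kawaguchi--Silverman classification of maps with identical canonical heights, handles the monomial and Chebychev cases. Finally, you never address constant elements of $\cS$ (degree $0$), which the paper disposes of with Lemma \ref{constant}; this is minor but should be said. So while your skeleton is the right one, the core implication ``common preperiodic set $\Rightarrow$ polynomial growth'' is left unestablished, and the placeholder arguments you offer for it would not survive the Latt\`es and same-degree subtleties.
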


We say that a rational function of degree greater than 1 is {\em
  non-special} if it is not conjugate to a monomial, a Chebychev
polynomial, or a Latt\`es map.  When $\cS$ contains a non-special
rational function of degree greater than one, we obtain a stronger
dichotomy.

\begin{thm}\label{linear}
Let $\cS$ be a finitely generated semigroup of rational functions in
$\bC(x)$ such that some element of $\cS$ is a non-special rational
function of degree greater than 1.  Then either $\cS$ has linear
growth or $\cS$ contains a nonabelian free semigroup.  
\end{thm}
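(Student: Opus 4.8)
The plan is to assume that $\cS$ contains no nonabelian free semigroup and to deduce that its growth is linear; by Theorem~\ref{rational} we already know \emph{a priori} that $\cS$ has polynomially bounded growth, so the real task is to improve ``polynomial'' to ``linear'' using the non-special element. First I would fix $f_0\in\cS$ that is non-special of degree $\ge 2$ and set $P=\Prep(f_0)$. Every $g\in\cS$ of degree $\ge 2$ is polarizable, so if $\Prep(g)\ne P$ the quoted dichotomy for polarizable maps gives a $j$ with $\langle f_0^{\,j},g^{\,j}\rangle$ free, contradicting our assumption; hence $\Prep(g)=P$ for all such $g$. Since $\hat h_g$ and $\hat h_{f_0}$ then have the same infinite zero set and both differ from a fixed Weil height by a bounded function, rigidity of canonical heights forces $\hat h_g=\hat h_{f_0}$. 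Applying this to $\psi\circ f_0$ for a Möbius element $\psi\in\cS$ and using $\hat h_{f_0}(\psi(f_0(x)))=\deg(f_0)\,\hat h_{f_0}(x)=\hat h_{f_0}(f_0(x))$ together with the surjectivity of $f_0$, I would conclude $\hat h_{f_0}\circ\psi=\hat h_{f_0}$. (Since $\cS$ need not be defined over $\overline{\bQ}$, one either first specializes to $\overline{\bQ}$, as in the proof of Theorem~\ref{rational}, or runs the whole argument with the measure of maximal entropy $\mu_{f_0}$ in place of $\hat h_{f_0}$; I would not dwell on this.)

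Next I would introduce the ``symmetry monoid'' $\Sigma=\{\sigma\in\mathrm{PGL}_2:\hat h_{f_0}\circ\sigma=\hat h_{f_0}\}$ and
\[\Lambda=\Sigma\cup\{\,g:\deg g\ge 2,\ \hat h_g=\hat h_{f_0}\,\}.\]
A short computation with the functional equation $\hat h_{f_0}(g(x))=\deg(g)\,\hat h_{f_0}(x)$ and $|\hat h_{f_0}-h|=O(1)$ shows that $\Lambda$ is closed under composition, and the previous paragraph gives $\cS\subseteq\Lambda$ apart from the constant elements of $\cS$, which I would set aside for the moment. The crucial structural input — and the step I expect to be the main obstacle — is that, because $f_0$ is non-special, $\Lambda$ is virtually cyclic: the group $\Sigma$ is finite, and there is a $\phi\in\Lambda$ of minimal degree $\ge 2$ such that every degree-$\ge 2$ element of $\Lambda$ is $\sigma\circ\phi^{\,n}$ for some $\sigma\in\Sigma$ and $n\ge 1$. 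This is exactly where ``non-special'' enters: the finiteness of $\Sigma$ and the common-iterate statement are instances of the rigidity theory for non-special rational maps with a common measure of maximal entropy (in the style of Ritt--Eremenko, Levin--Przytycki and Pakovich, equivalently the arithmetic rigidity of Yuan--Zhang and Baker--DeMarco), and assembling precisely the normal form needed here is the technical heart of the proof.

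Granting this, I would finish by bookkeeping. Applying the structure statement to $\phi^{a}\circ\sigma\in\Lambda$ produces identities $\phi^{a}\circ\sigma=\tau_{a,\sigma}\circ\phi^{a}$ with $\tau_{a,\sigma}\in\Sigma$; iterating these lets one rewrite any word $\sigma^{(0)}\circ\phi^{a_1}\circ\sigma^{(1)}\circ\cdots\circ\phi^{a_m}\circ\sigma^{(m)}$ in the generators as $\rho\circ\phi^{A}$ with $\rho\in\Sigma$ and $A=a_1+\cdots+a_m$. Hence every degree-$\ge 2$ element of $\cS$ has the form $\rho\circ\phi^{A}$, and if it is represented by a word of length $\le n$ then $A\le Mn$, where $M$ is the largest exponent contributed by a degree-$\ge 2$ generator; so there are at most $|\Sigma|(Mn+1)$ such elements. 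Degree-$1$ elements of $\cS$ lie in the finite group generated by the Möbius generators; and a word representing a constant map factors through one of the finitely many constant generators, so its value lies in the $\cS$-orbit of that generator's value, which meets a length-$n$ ball in $O(n)$ points by the same normal form. Summing these contributions, the number of elements of $\cS$ of word-length $\le n$ is $O(n)$, which is the desired linear growth.
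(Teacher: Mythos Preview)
Your overall architecture matches the paper's exactly: assume $\cS$ has no nonabelian free subsemigroup, invoke Theorem~\ref{common} to force $\Prep(g)=\Prep(f_0)$ for every $g\in\cS^+$, observe this passes to the degree-one elements via $\psi f_0\in\cS^+$, and then deduce linear growth. The paper packages the last step as Proposition~\ref{g1}. (Your opening appeal to Theorem~\ref{rational} is unnecessary and plays no role in what follows.)

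The divergence is in how linear growth is actually established, and here your key assertion is a genuine gap. You claim that the monoid $\Lambda$ of all rational maps with $\mu=\mu_{f_0}$ is ``virtually cyclic'': $\Sigma$ finite, and every degree-$\ge 2$ element of $\Lambda$ equal to $\sigma\circ\phi^{n}$ for a single $\phi$. The finiteness of $\Sigma$ is indeed Levin's theorem (this is the paper's Lemma~\ref{linfin}), but the normal form $\sigma\circ\phi^{n}$ is not a direct consequence of Levin--Przytycki, Ye, or the arithmetic rigidity results you cite. What Ye actually provides (extracted in the paper as Lemma~\ref{HexiLem}) is that two non-special maps of the \emph{same} degree with the same maximal-entropy measure and a shared repelling fixed point with equal multipliers must coincide; this is much weaker than ``same degree implies they differ by a left $\Sigma$-factor'', and your rewriting identities $\phi^{a}\circ\sigma=\tau_{a,\sigma}\circ\phi^{a}$ rest entirely on that unproved normal form. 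You flag this as ``the technical heart'', but you do not supply it, and it is not a black-box citation.

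The paper sidesteps this by proving two more modest facts that already yield linear growth. First, Lemma~\ref{degree} (via Levin's common-iterate theorem) shows every degree in $\cS^+$ is a power of a single $a\ge 2$, so a length-$n$ word has $O(n)$ possible degrees. Second, Lemma~\ref{deg-bound} gives a uniform bound $N$ on the number of elements of $\cS$ of any fixed degree: one chooses a finite $\cS^+$-orbit $\cO$ in the Julia set over a finitely generated field $K$, and shows (Lemma~\ref{derivative}, using Ye) that two same-degree elements agreeing on $\cO$ have multipliers at a chosen periodic point differing by a root of unity in $K$, whence at most $(\#\text{roots of unity in }K)\cdot|\cO|^{|\cO|}$ elements per degree. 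Multiplying gives $|S^{\le n}|=O(n)$. This route never needs the normal form $\sigma\circ\phi^n$; it only needs ``finitely many per degree'' and ``$O(n)$ degrees'', both of which are strictly weaker than your virtually-cyclic claim and are proved in full.
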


We derive Theorem \ref{rational} from a result relating common
preperiodic points of rational functions with free subsemigroups.  The
techniques used for this result work in the setting of
morphisms of projective varieties that are polarized by the same ample
line bundle.  For $V$ a projective variety, a morphism $f: V \lra V$
is said to be {\em polarized} by the ample line bundle $\CL$ if there
is a $d > 1$ such that $f^*\CL \cong \CL^{\bigotimes d}$.  The notion
of polarization is due to Zhang \cite{ZhangSmall}.  Any morphism of
degree greater than 1 on projective space $\bP^n$ is polarized by
$\cO_{\bP^n}(1)$; it is also true that any polarized morphism on a
variety $V$ comes from restricting a morphism of projective space to
$V$ for some embedding of $V$ into projective space (see \cite{Fak,
  Lucien}).  Polarized morphisms give rise to canonical height
functions with good properties (see \cite{CS93} and Section
\ref{height-section}).  In the theorem below and throughout this
paper, we let $\langle f, g \rangle$ denote the semigroup generated
under composition by $f$ and $g$ whenever $f$ and $g$ are two
functions from a set to itself.  We also let $\Prep(f)$ and $\Prep(g)$
denote the set of preperiodic points of $f$ and $g$, respectively.
 
\begin{thm}\label{common}
  Let $V$ be a projective variety, and let $f, g: V \lra V$ be
  polarized by the same line bundle $\CL$. If
  $\Prep(f) \not= \Prep(g)$, then there is a positive integer $j$ such that $\langle
  f^j, g^j \rangle$ is a free semigroup on two generators.  
\end{thm}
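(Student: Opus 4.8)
The plan is to use canonical height functions. Since $f$ and $g$ are polarized by the same ample line bundle $\CL$, let $\hf$ and $\hg$ denote the associated canonical heights on $V(\bar{\QQ})$ (after spreading out to a number field, or working over the appropriate function field), each of which is nonnegative, vanishes exactly on $\Prep(f)$ resp. $\Prep(g)$, and satisfies $\hf \circ f = d_f \hf$ and $\hg \circ g = d_g \hg$ for the respective degrees $d_f, d_g > 1$. Because $\Prep(f) \neq \Prep(g)$, there is a point where one height vanishes and the other does not; by symmetry we may assume there is a point (and hence a Zariski-dense set of points of bounded ordinary height) where $\hg > 0$ while $\hf$ can be controlled. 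The key quantitative input is a comparison: both $\hf$ and $\hg$ differ from a fixed Weil height $h_\CL$ attached to $\CL$ by a bounded amount, so $\hf$ and $\hg$ are comparable up to $O(1)$ — but crucially they are \emph{not} proportional, since their zero loci differ.

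First I would fix $j$ large enough that $d_f^j$ and $d_g^j$ both exceed some constant $C$ to be determined, and set $F = f^j$, $G = g^j$; the preperiodic sets are unchanged, so $\Prep(F) = \Prep(f) \neq \Prep(g) = \Prep(G)$. To show $\langle F, G\rangle$ is free on two generators, I must show that two distinct words $W_1, W_2$ in $F, G$ give distinct maps on $V$. Suppose $W_1 = W_2$ as self-maps of $V$. Evaluate both sides at a cleverly chosen point $P$: pick $P \in \Prep(G) \setminus \Prep(f)$ when the words "end" (reading right-to-left, i.e., the first map applied) differently in a way that lets one word land in $\Prep(G)$ while the other does not — more precisely, the standard trick is to compare the growth of $\hf$ and $\hg$ along the orbit. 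Apply $\hf$ to both sides of $W_1(P) = W_2(P)$: each application of $F$ multiplies $\hf$ by exactly $d_f^j$, while each application of $G$ changes $\hf$ by a factor that is bounded above and below (since $\hf \asymp h_\CL \asymp \hg$ and $\hg$ scales by $d_g^j$ under $G$) — choosing $j$ large forces these "$G$-distortions" of the $\hf$-value to be swamped. A symmetric estimate with $\hg$ in place of $\hf$ gives a competing inequality. Comparing the number of $F$'s versus $G$'s and their positions in $W_1$ and $W_2$ via these two incompatible growth rates yields a contradiction unless $W_1 = W_2$ as words.

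The heart of the argument — and the main obstacle — is making the height comparison genuinely \emph{two-sided and positional}: it is not enough to count the total number of $F$'s and $G$'s in each word, one must distinguish words with the same letter-counts but different orderings (e.g.\ $FG$ vs.\ $GF$). To handle this I would argue inductively on word length: if the leftmost letters (outermost maps) of $W_1$ and $W_2$ agree, cancel them (both $F$ and $G$ are nonconstant morphisms of a polarized variety, hence dominant, so left-cancellation of equal self-maps... — actually one cancels on the right, peeling off innermost letters, since $W_1 \circ P$ determines less than $W_1$ itself). The cleaner route: if the innermost letters differ, say $W_1 = W_1' \circ F$ and $W_2 = W_2' \circ G$, choose $P$ with $\hf(P) = 0$ but $\hg(P) > 0$ (possible since $\Prep(f) \not\subseteq \Prep(g)$, after possibly swapping $f,g$); then $\hf(F(P)) = 0$ so $\hf(W_1(P)) = 0$, while I would show $\hf(W_2(P)) = \hf(W_2'(G(P))) > 0$ by tracking that $G(P) \notin \Prep(f)$ — here I use that $\hf$ never returns to $0$ once positive along $F,G$-orbits, because $F$ scales it by $d_f^j > 1$ and $G$ distorts it by a bounded factor that, for $j$ large, cannot bring a positive value back to $0$ (indeed it can bring it near $0$ but a uniform lower bound $\hf \geq c > 0$ on the relevant bounded-height locus, combined with finiteness of small points, rescues this). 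If instead the innermost letters agree, peel them off and induct on the combined length of $W_1, W_2$. The subtle point requiring care is the existence of that uniform lower bound $c$: this follows from Northcott — the set $\{Q : h_\CL(Q) \leq B,\ 0 < \hf(Q) < c\}$ is finite for each $B$, and along an orbit from a fixed $P$ the ordinary heights $h_\CL$ stay bounded in terms of the word length, so one argues on a per-word-length basis, choosing $j$ (equivalently the threshold) after fixing the two competing words. Packaging this uniformly over all pairs of words is exactly where the polarization hypothesis and the functorial/canonical properties of $\hf, \hg$ from Section \ref{height-section} do the work.
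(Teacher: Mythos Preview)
Your inductive step has a genuine gap. You choose $P$ with $\hf(P)=0$ and $\hg(P)>0$ and, when $W_1=W_1'\circ F$, claim ``$\hf(F(P))=0$ so $\hf(W_1(P))=0$.'' But $\hf(F(P))=0$ only says $F(P)\in\Prep(f)$; to get $\hf(W_1'(F(P)))=0$ you need $W_1'$ to carry $\Prep(f)$ into itself, hence $G(\Prep(f))\subseteq\Prep(f)$. That is precisely what fails here: if $g$ preserved $\Prep(f)$ then every $P\in\Prep(f)$ would have finite $g$-orbit (its orbit lies in $\Prep(f)\cap V(L)$ for a fixed finitely generated $L$, which is finite by Northcott), forcing $\Prep(f)\subseteq\Prep(g)$. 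Your proposed rescue via Northcott lower bounds does not repair this, because the problem on the $W_1$-side is not that $\hf$ drifts from a positive value toward $0$ but that it \emph{starts} at $0$ and must stay there, which it will not once a $G$ appears in $W_1'$. The parallel claim $G(P)\notin\Prep(f)$ on the $W_2$-side is equally unsupported.

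The paper avoids tracking membership in $\Prep(f)$ altogether. It fixes any $\beta$ with $\tau_f(\beta)\neq\tau_g(\beta)$ (whose existence follows from $\Prep(f)\neq\Prep(g)$ via Northcott or Moriwaki) and looks at the \emph{normalized} height $\tau(W(\beta))/\deg W$ for an arbitrary word $W$ in $f,g$. The same telescoping sum that defines canonical heights shows this quantity lies within $C'/d^j$ of $\tau_f(\beta)$ or $\tau_g(\beta)$ according as the innermost $j$ letters of $W$ are $f^j$ or $g^j$; the later letters contribute geometrically decaying corrections \emph{regardless of which letters they are}. Choosing $j$ with $C'/d^j<\tfrac14|\tau_f(\beta)-\tau_g(\beta)|$ then separates any two words in $f^j,g^j$ with different innermost letters by their value at $\beta$, and surjectivity (right-cancellation) handles the induction when the innermost letters agree. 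The key difference from your approach is that the normalization by $\deg W$ makes the estimate uniform over all outer words, so one never has to control where the orbit lands.
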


In the case of polynomials, we can derive stronger results in some
cases.

\begin{thm}\label{poly1}
  Let $\mathcal{S}$ be a finitely generated semigroup of polynomials
  in $K[x]$ for $K$ a field such that $\Char K$ does not divide the
  degrees of any element of $\cS$.   Then either $\cS$ has polynomially
  bounded growth or $\cS$ contains a nonabelian free semigroup. 
\end{thm}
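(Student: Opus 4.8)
The plan is to separate the characteristic-zero case, where Theorem~\ref{rational} applies directly, from positive characteristic, and in the latter to run the same kind of case analysis that underlies Theorem~\ref{rational}, with Theorem~\ref{common} doing the decisive work. First I would reduce to the case that $K$ is finitely generated over its prime field, the coefficients of a finite generating set of $\cS$ lying in such a subfield. If $\Char K = 0$ then $K$ embeds in $\bC$, and since polynomials lie in $\bC(x)$, Theorem~\ref{rational} already yields that $\cS$ has polynomially bounded growth or contains a nonabelian free semigroup. So assume henceforth $\Char K = p > 0$, with $p$ dividing no degree occurring in $\cS$.

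Next I would dispose of the case in which every element of $\cS$ has degree $1$: then $\cS$ lies in the affine group, which embeds in $\GL_2(K)$, and the semigroup of affine maps under composition is cancellative, so $\cS$ is a finitely generated cancellative linear semigroup, and the theorem of Okni\'nski and Salwa \cite{Ok1} gives that either $\cS$ contains a nonabelian free semigroup or the group it generates is nilpotent-by-finite, whence $\cS$ has polynomially bounded growth. (If $K$ is finite this semigroup of affine maps is itself finite, so there is nothing to prove.) So we may assume some $h\in\cS$ has $\deg h\ge 2$; as $p\nmid\deg h$, this $h$ and every element of $\cS$ of degree greater than $1$ extend to endomorphisms of $\bP^1$ polarized by $\cO_{\bP^1}(1)$. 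If two elements $f,g\in\cS$ of degree greater than $1$ satisfy $\Prep(f)\ne\Prep(g)$, then Theorem~\ref{common} produces a positive integer $j$ with $\langle f^j,g^j\rangle$ free, so $\cS$ contains a nonabelian free semigroup.

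The one remaining case, which is the heart of the matter, is that all elements of $\cS$ of degree greater than $1$ share a common preperiodic set $P_0$; here I would prove that $\cS$ has polynomially bounded growth. The bookkeeping step is that, comparing the canonical heights of $h$ and of $\phi\circ h$ for $\phi\in\cS$ of degree $1$, the functional equation $\hat h(h(x))=(\deg h)\hat h(x)$ for the common canonical height $\hat h$ forces every degree-$1$ element of $\cS$ to preserve $\hat h$, hence to be an affine symmetry of $P_0$; so the degree-$1$ elements form a subgroup of the symmetry group of $P_0$. The structural step is a classification, in characteristic $p$ with $p\nmid\deg$, of the polynomials of degree greater than $1$ with a fixed preperiodic set: after a single affine change of coordinate, either $P_0$ is ``monomial'' and these polynomials are all of the form $\zeta x^d$ with $\zeta$ a root of unity of bounded order, or $P_0$ is of Chebyshev or of non-special type and, modulo the finite symmetry group of $P_0$, these polynomials lie among the iterates of finitely many fixed mutually commuting ``primitive'' polynomials. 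In each case one finds that the degree map sends $\cS$ onto a finitely generated submonoid of $(\bZ_{\ge 1},\times)$ with uniformly finite fibres; since that submonoid has polynomial growth, so does $\cS$ (linear growth in the non-special subcase).

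The main obstacle I expect to be the structural step in positive characteristic, and within it the subcase in which $\cS$ is, up to conjugation over $\Kbar$, defined over $\overline{\F}_p$: there every point of $\bP^1$ is preperiodic for every element of degree greater than $1$, so $P_0=\bP^1(\overline{\F}_p)$ for all of them and the shared-preperiodic-set hypothesis carries no information. This subcase must be argued on its own, exploiting the tameness furnished by $p\nmid\deg$ --- so that B\"ottcher coordinates exist at $\infty$ --- together with a Ritt-type analysis of compositional identities, in order either to force $\cS$ into the same ``finite-extension-of-$(\bZ_{\ge 1},\times)$'' shape or to exhibit a nonabelian free subsemigroup.
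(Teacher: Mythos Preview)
Your route is quite different from the paper's, and the gap you yourself flag is fatal to the plan as it stands. The paper does \emph{not} pass through Theorem~\ref{common} or through a classification of polynomials with a fixed preperiodic set. Instead it exploits the one feature that polynomials have and general rational maps lack: a common totally ramified fixed point at $\infty$. Working in local coordinates there (equivalently, in $K[[X]]$ at $0$), Lemma~\ref{JZ1} gives a B\"ottcher coordinate $L$ with $L^{-1} f L = X^m$ for some $f\in\cS$ with $e_f(\infty)=\deg f=m>1$; then Lemmas~\ref{JZ2}, \ref{JZ3}, and \ref{4} show that unless every element of $L^{-1}\cS L$ is of the form $\xi X^n$ with $\xi$ a root of unity, $\cS$ already contains a free semigroup on two generators. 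In the residual case $\langle \xi_1 X^{e_1},\dots,\xi_r X^{e_r}\rangle$ the degree map has finite fibres and $\cS$ has polynomial growth. This argument is uniform in the characteristic; the hypothesis $p\nmid\deg$ is used only to guarantee existence of the B\"ottcher coordinate.

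Your plan, by contrast, tries to deduce polynomial growth from the hypothesis ``all elements of degree $>1$ share $\Prep$'', and this implication is simply false in characteristic $p$: Example~\ref{p-counter} gives $f=x^d$ and any non-monomial $g\in\F_p[x]$ of degree $e$ with $p\nmid de$ for which $\Prep(f)=\Prep(g)=\overline{\F_p}$ yet $\langle f,g\rangle$ is free. So the ``structural step'' you propose cannot hold as stated, and the isotrivial subcase you isolate is not a corner case but the generic one over finite fields---the shared-preperiodic hypothesis is vacuous there. Your final paragraph in effect concedes this and points to B\"ottcher coordinates and a Ritt-type analysis; that is exactly the paper's proof, and once you carry it out you will see that the whole detour through Theorem~\ref{common} and the case split on characteristic is unnecessary. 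The proof of Theorem~\ref{poly1-gen} is what you should write.
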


\begin{thm}\label{poly2}
  Let $K$ be a field and let $f, g \in K[x]$ have degree greater than
  1.  Suppose that $\Char K$ does not divide $\deg f$ or $\deg g$.  If $\Prep(f) \not= \Prep(g)$, then $\langle f, g \rangle$ is a free semigroup on two generators.
\end{thm}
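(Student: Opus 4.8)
The plan is to argue by contradiction: assuming $\langle f, g \rangle$ is not free, I reduce a hypothetical relation to a normal form, dispose of the degenerate normal forms directly, and handle the remaining case using the rigidity of polynomial decompositions. (Note first that if $\Char K = p > 0$ and $K \subseteq \overline{\F_p}$, then every point of $\mathbb{A}^1(\Kbar)$ lies in a finite field stable under $f$ and $g$, so $\Prep(f) = \Prep(g) = \overline{\F_p}$ and the hypothesis is vacuous; so we may assume this is not the case, and we work throughout over $\Kbar$.)

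So suppose $\langle f, g \rangle$ is not free, so that there are distinct words $w_1 \neq w_2$ in the letters $f, g$ representing the same polynomial. Since a polynomial of degree $\geq 2$ over the algebraically closed field $\Kbar$ is surjective on $\mathbb{A}^1(\Kbar)$, the semigroup acts by surjective self-maps, so $\beta \circ a = \gamma \circ a$ implies $\beta = \gamma$; hence we may cancel a common innermost block from the relation. A composite of one or more polynomials of degree $\geq 2$ has degree $\geq 2$ and so is never the identity, which rules out the reduction terminating with one word an initial segment of the other; therefore the reduced relation reads $U \circ f = V \circ g$ for words $U, V$ (each possibly empty) in $f$ and $g$. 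If $U$ and $V$ are both empty, then $f = g$, so $\Prep(f) = \Prep(g)$. If exactly one, say $U$, is empty, then $f = V \circ g$ with $V$ nonempty; comparing degrees, $\deg f = (\deg V)(\deg g)$ with $\deg V \geq 2$, and $V$ cannot contain the letter $f$ --- otherwise $\deg V \geq \deg f$ would force $\deg g \leq 1$ --- so $V = g^{\circ m}$ and $f = g^{\circ (m+1)}$ for some $m \geq 1$, giving $\Prep(f) = \Prep(g^{\circ(m+1)}) = \Prep(g)$. The case $V$ empty is symmetric. In every degenerate case we contradict $\Prep(f) \neq \Prep(g)$.

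It remains to treat the main case, in which $U$ and $V$ are both nonempty, so the polynomial $P := U \circ f = V \circ g$ carries two composition factorizations arising from genuinely different words. Here I would pass to factorizations of $f$, $g$, $U$, $V$ into indecomposables --- this is where the hypotheses $\Char K \nmid \deg f$ and $\Char K \nmid \deg g$ enter, since then every indecomposable factor in sight has degree prime to $p$, keeping Ritt's theory available --- so that $P$ acquires two factorizations into indecomposables, one read off from $w_1$, the other from $w_2$. If these two sequences of indecomposables coincide, then the sequence of indecomposable factors of $f$ and that of $g$ fail to generate a free submonoid of the monoid of all such sequences, so by the defect theorem for two words they are both powers of a common sequence; this means $f = h^{\circ k}$ and $g = h^{\circ \ell}$ for a single polynomial $h$ and integers $k, \ell \geq 1$, whence $\Prep(f) = \Prep(h) = \Prep(g)$. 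If the two sequences differ, then by Ritt's first theorem they are linked by a chain of elementary moves, at least one of which must take place across an interface between an indecomposable factor of $f$ and one of $g$; Ritt's second theorem then forces those interface factors to be, up to linear conjugacy, of power type or of Chebyshev type, which one propagates along $f$ and along $g$ to conclude that $f$ and $g$ are simultaneously conjugate, by a single affine map, to power maps or to Chebyshev polynomials. In either subcase $\Prep(f)$ and $\Prep(g)$ are the image under that one affine map of the corresponding universal preperiodic set (the roots of unity together with $0$, or $\{2\cos\pi q : q \in \Q\}$), so $\Prep(f) = \Prep(g)$, a final contradiction.

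The main obstacle is the last step: promoting a single cross-interface Ritt move to the statement that $f$ and $g$ are \emph{globally} of the same special form conjugated by the \emph{same} affine map, and hence have equal preperiodic sets. This rigidity, internal to the combinatorics of polynomial decomposition, is also precisely what lets one avoid passing to iterates $f^j, g^j$ as in Theorem~\ref{common}: for general polarized morphisms one has only the canonical height, whose comparison between $\hat h_f$ and $\hat h_g$ carries an additive error that becomes negligible only after iterating, whereas for polynomials the decomposition combinatorics detects the relation exactly.
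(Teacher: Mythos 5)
Your reduction to a normal-form relation $U\circ f=V\circ g$ (via right cancellativity) and your treatment of the degenerate cases are fine, but the main case rests on a step that is genuinely missing, not merely unfinished bookkeeping, and you flag it yourself as ``the main obstacle.'' Ritt's second theorem only classifies a single elementary move, and the swap $x^n\circ x^rh(x^n)=x^rh(x)^n\circ x^n$ changes the non-monomial factor as it passes an interface; propagating one cross-interface move along arbitrary words in $f$ and $g$ to a \emph{global} simultaneous normal form is precisely the hard combinatorics of semigroup relations among polynomials, which the paper deliberately avoids (the introduction notes that the Ritt route involves ``additional subtleties''). Worse, even granting the propagation, the conclusion you draw from it does not follow: Ritt II produces power/Chebyshev shapes only up to uncontrolled linear factors, and shape alone does not give $\Prep(f)=\Prep(g)$. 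For instance $x^2$ and $2x^3$ are both monomials (so ``simultaneously of power type'' by the identity conjugation), yet their preperiodic sets differ and $\langle x^2,2x^3\rangle$ is free; the dividing line is whether the multiplicative constant is a root of unity (this is exactly Lemma \ref{JZ3}), so your argument would still owe a proof that a relation forces the constants to be torsion. Finally, in characteristic $p$ you would need tame-case versions of Ritt's first and second theorems (the second is due to Zannier in this generality; the first is not automatic), which your sketch does not supply.

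For contrast, the paper's proof (via Theorem \ref{poly2-gen}) runs along a completely different line: it works at the common fixed point $\infty$, where $e_f(\infty)=\deg f$ and $e_g(\infty)=\deg g$ are $>1$ and prime to $\Char K$, and applies the Jiang--Zieve power-series lemmas (Lemmas \ref{JZ1}--\ref{JZ3} and \ref{4}). After conjugating by a formal B\"ottcher coordinate $L$, either $\langle f,g\rangle$ is already free, or $L^{-1}fL=X^m$ and $L^{-1}gL=\xi X^n$ with $\xi$ a root of unity, in which case $\langle f,g\rangle$ has polynomially bounded growth; then the hypothesis $\Prep(f)\neq\Prep(g)$ together with Theorem \ref{common} (canonical heights) yields a free subsemigroup $\langle f^j,g^j\rangle$, contradicting polynomial growth. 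In particular the paper does \emph{not} dispense with the height argument in the residual case --- that is exactly where $\Prep(f)\neq\Prep(g)$ enters --- whereas your sketch asks the decomposition combinatorics to carry that weight, and as written it does not.
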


When $f \in \bC[x]$ has degree greater than 1, its Julia set is
determined by $\Prep(f)$.  Thus, Theorem \ref{poly2}
implies that if $f, g \in \bC[x]$ have degree greater than one and do
not share the same Julia set, then $\langle f, g \rangle$ is a free semigroup on two generators.

We are also able to prove the following for abelian varieties of any
dimension in any characteristic.

\begin{thm}\label{av-thm}
  Let $A$ be an abelian variety.  Let $\cS$ be a finitely generated
  semigroup of finite morphisms from $A$ to itself.  Then either $\cS$ has
  polynomially bounded growth or $\cS$ contains a nonabelian free semigroup.  
\end{thm}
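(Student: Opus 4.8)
The plan is to separate the maps in $\cS$ into a ``linear'' part (an isogeny) and a ``translation'' part. By rigidity, every finite morphism $f\colon A\to A$ can be written uniquely as $f(x)=\psi_f(x)+c_f$ with $\psi_f\in\End(A)$ an isogeny and $c_f\in A$, and $(f\circ g)(x)=\psi_f\psi_g(x)+\bigl(\psi_f(c_g)+c_f\bigr)$, so $\pi\colon\cS\to\End(A)$, $f\mapsto\psi_f$, is a semigroup homomorphism. Write $\cS=\langle f_1,\dots,f_k\rangle$ with $f_i(x)=\psi_i(x)+c_i$, put $T:=\pi(\cS)$, and let $G:=\langle T\rangle$ be the group it generates inside $(\End(A)\otimes\Q)^\times$; this makes sense because an isogeny is a unit of the finite-dimensional semisimple $\Q$-algebra $D:=\End(A)\otimes\Q$. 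Thus $T$ is a finitely generated \emph{cancellative} subsemigroup of the finitely generated linear group $G$.

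First I would dispose of the case in which $T$ contains a nonabelian free semigroup. By the linear-semigroup version of the Tits alternative for cancellative semigroups due to Okni\'nski and Salwa \cite{Ok1}, either $T$ contains a nonabelian free semigroup, or $G$ is virtually nilpotent; in the latter case $T$, being a subsemigroup of a group of polynomial growth, has polynomially bounded growth. In the former case choose $f,g\in\cS$ with $\pi(f),\pi(g)$ generating a free subsemigroup of $T$: any relation between two distinct words in $f$ and $g$ would map under $\pi$ to a relation between the corresponding distinct words in $\pi(f),\pi(g)$, which is impossible, so $\langle f,g\rangle$ is free and we are done.

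So assume $G$ is virtually nilpotent; it remains to control the translation parts. For a word $w=(i_1,\dots,i_n)$ one computes $f_w(0)=\sum_{j=1}^{n}\bigl(\psi_{i_1}\cdots\psi_{i_{j-1}}\bigr)c_{i_j}$, so every translation part of a word of length $\le n$ lies in the $\ZZ[G]$-submodule $\sB\subseteq A$ generated by $c_1,\dots,c_k$ and is built from the partial products in $T\cup\{1\}$, of which there are only polynomially many in $n$ by the previous paragraph. Since $\End(A)$ is a finitely generated $\ZZ$-module and each $\psi_i$ has a quasi-inverse of the form $\phi_i/\deg\psi_i$, the subring $R\subseteq\End(A)[1/m]$ generated by the $\psi_i$ and their inverses (where $m=\mathrm{lcm}_i\deg\psi_i$) is finitely generated as a $\ZZ[1/m]$-module, hence so is $\sB$; as $\ZZ[1/m]$ is a PID this gives $\sB\cong\ZZ[1/m]^r\oplus\sB_{\mathrm{tors}}$ with $\sB_{\mathrm{tors}}$ \emph{finite}. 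Mapping $\sB$ onto $V:=\sB\otimes_\ZZ\Q$ carries $\cS$ into the finitely generated linear group $\widehat\Gamma:=G\ltimes V$ with fibers of size at most $\lvert\sB_{\mathrm{tors}}\rvert$, so up to a bounded factor the growth of $\cS$ equals that of its image. Now $\widehat\Gamma$ is finitely generated and virtually solvable, so by the Milnor--Wolf theorem \cite{Mil, Wol} it is either virtually nilpotent --- whence $\widehat\Gamma$, and therefore $\cS$, has polynomially bounded growth, finishing the proof --- or it has exponential growth. In the latter case some $\gamma\in G$ acts on $V$ with an eigenvalue that is not a root of unity; a standard reduction, using that $G$ is virtually nilpotent (so that a virtually nilpotent group generated by quasi-unipotent matrices is virtually unipotent), lets one arrange that this $\gamma$ is the linear part $\psi=\pi(s)$ of an honest element $s\in\cS$. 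By Kronecker's theorem $\psi$ then has, at some place $v$ of $\Qbar$, an eigenvalue $\lambda$ with $\lvert\lambda\rvert_v\ne 1$, so $s$ acts on the $v$-adic points of $A$ with attracting or repelling behaviour on a nonzero subspace. Producing a second element $s'\in\cS$ --- say a suitable product $s^a t s^a$ with $t\in\cS$ chosen, as exponential growth of $\widehat\Gamma$ permits, so that the relevant $v$-adic fixed point differs from that of $s$ --- one runs the usual ping-pong argument on small $v$-adic neighbourhoods of the two fixed points to conclude that $\langle s^N,(s')^N\rangle$ is a nonabelian free semigroup for $N$ large.

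The main obstacle is this last case: converting the exponential growth of $\widehat\Gamma=G\ltimes V$ into a nonabelian free \emph{sub}semigroup of $\cS$, rather than merely of $\widehat\Gamma$. Two points need care. First, one must descend the ``expanding eigenvalue'' from the abstract group $G$ to the image $T=\pi(\cS)$, so as to obtain a genuine $s\in\cS$ with hyperbolic linear part; here one uses the nilpotence of $G$, which rules out the ``product escapes'' phenomenon responsible for non-quasi-unipotent behaviour in free-like groups. Second, one must realize two distinct attracting/repelling fixed points by elements of $\cS$ and verify the ping-pong inclusions $v$-adically --- this is exactly where the argument parallels the proof of Theorem~\ref{common}, and, as there, one may need to pass to high iterates and, if torsion among the translation directions interferes, to translate by elements of the finite group $\sB_{\mathrm{tors}}$. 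Everything else --- the reduction to the linear part, the appeals to Okni\'nski--Salwa and to Milnor--Wolf, and the finiteness of $\sB_{\mathrm{tors}}$ --- is routine.
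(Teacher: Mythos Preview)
Your proposal has a genuine gap in the final case, and the detour that leads you there is avoidable.

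The problematic case is when $G$ is virtually nilpotent but $\widehat{\Gamma}$ has exponential growth. There you sketch a $v$-adic ping-pong argument, but you yourself flag this as ``the main obstacle,'' and what you write is not a proof: you need an element of $\cS$ (not merely of $G$) whose linear part has a non-unit eigenvalue at some place, a second element whose relevant fixed point is genuinely different, and then a verification of ping-pong inclusions for the \emph{semigroup} $\cS$ rather than the ambient group. None of these steps is routine, and the ``standard reduction'' you invoke (passing from quasi-unipotent to unipotent inside a virtually nilpotent group, then descending to $T=\pi(\cS)$) does not by itself produce the required elements of $\cS$. There are also smaller slips: $\widehat{\Gamma}:=G\ltimes V$ with $V$ a $\Q$-vector space is \emph{not} finitely generated as a group, so Milnor--Wolf does not apply to it as stated; and your $\sB$ is declared a $\ZZ[G]$-submodule of $A$, but the inverses in $G$ do not act on $A$ --- you really want the $\ZZ[T]$-submodule generated by the $c_i$, which is what your computation of $f_w(0)$ actually uses.

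The paper sidesteps all of this by applying Okni\'nski--Salwa \emph{once}, to the full affine semigroup rather than to the linear parts. The point you are missing is that one can arrange for the translation constants to live in a finitely generated abelian group from the start: choose a finitely generated field $K$ over which everything is defined and with $A(K)$ Zariski dense; by the Lang--N\'eron theorem $A(K)$ is finitely generated, so $A(K)=A(K)_{\tors}\oplus G_0$ with $G_0\cong\ZZ^n$. Then $\cS$ embeds into the affine maps of $A(K)$, and projecting to $G_0$ (with fibres bounded by a function of $|A(K)_{\tors}|$) lands $\cS$ in a cancellative subsemigroup of $\Aff_n(\Q)\subset\GL_{n+1}(\Q)$. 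Now \cite[Theorem~1]{Ok1} finishes in one stroke: either that image contains a nonabelian free semigroup (which pulls back to $\cS$) or the group it generates is virtually nilpotent (so polynomial growth, hence polynomial growth for $\cS$). No separate treatment of the translation parts, no Milnor--Wolf, no ping-pong. Your own observation that the $\ZZ[T]$-module generated by $c_1,\dots,c_k$ is finitely generated over $\ZZ$ (since $\ZZ[T]\subseteq\End(A)$ is) would serve equally well in place of $A(K)$; the moral is to keep the linear and translation parts together and invoke \cite{Ok1} for the combined affine semigroup.
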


Since every irreducible curve $C$ with an infinite semigroup of
nonconstant maps $f: C \lra C$ has genus 0 or 1, this means that any
semigroup of morphisms from a curve to itself either has polynomial
growth or contains a nonabelian free semigroup (see Corollary
\ref{all-curves}).

Ritt \cite{R1} studied the semigroup of polynomials under composition
and gave necessary and sufficient conditions for two polynomials to
commute under composition and determined relations for the semigroup
of polynomials under composition.  It is very possible that in this
case, some of the results here can be obtained using Ritt's work,
although there do appear to be some additional subtleties involved.
We also point out that the Tits alternative has been considered for
automorphism groups of algebraic varieties, with a complete result for
projective varieties in characteristic 0 (see \cite{T-C} for a survey) as
well as some results in characteristic $p$ (see \cite{Hu}).  The Tits
alternative has also been proved for the Cremona group
$\Bir(\bP^2)$ in all characteristics (see \cite{Cantat}). 

Pakovich \cite{Pak} has proved if $\cS$ is a semigroup of non-special
polynomials over the complex numbers, then either $\cS$ contains a
nonabelian free semigroup or $\cS$ is amenable.  He also
extends this to certain classes of rational functions.  We note that
one can use F{\o}lner conditions along with Theorem \ref{linear} to
prove that if $\cS$ is any finitely generated semigroup of non-special
rational functions in characteristic 0, then either $\cS$ contains a
nonabelian free semigroup or $\cS$ is left amenable.  On
the other hand, it is not clear how to adapt our techniques
to semigroups that are not finitely generated, though it may be
possible to do so in the non-special case.  Hindes \cite{Hindes} has
proved that certain conditions on a semigroup of rational functions
over $\bQb$ guarantee that the semigroup is free; the conditions are
much more restrictive than those of Theorem \ref{common} but have the
advantage of ensuring that certain semigroups are free (and don't
merely contain a nonabelian free semigroup).

An outline of the paper is as follows.  We begin with some
preliminaries on semigroups and growth in semigroups in Section
\ref{prelim}.  In Section \ref{common-sec}, we prove Theorem
\ref{common}.  We begin the proof in Section \ref{ping} with a proof
of Proposition \ref{free}, a variant on the ping-pong lemma (see
\cite{Tits}), that can be applied to a wide variety of functions.
Then we introduce canonical height functions, both Weil and Moriwaki,
which allow us to use Proposition \ref{free} to derive Theorem
\ref{common}.  We close the section with Examples \ref{AV} and
\ref{p-counter}, which show that a converse to Theorem \ref{common} is
not possible in higher dimensions in characteristic 0 or even in
dimension 1 in characteristic $p$.  In Section \ref{linear-sec}, we
prove Theorems \ref{linear} and \ref{rational}.  We do so by proving
Propositions \ref{g1} and \ref{g2}, which may be thought of as
converses to Theorem \ref{common} in the special case of rational
functions in characteristic 0; Theorems \ref{linear} and
\ref{rational} follow immediately from combining Propositions \ref{g1}
and \ref{g2} with Theorem \ref{common}.  Proposition \ref{g1} treats the
  case of non-special rational functions; the proof uses a result due
  to Ye (see \cite[Theorem 1.5]{Ye}) along with an argument about the
  action of a semigroup of morphisms on a point with a finite orbit
  under that semigroup.  The proof of Proposition \ref{g2} is then a
  case-by-case analysis of the different sorts of special rational
  functions.  Section \ref{poly-sec} contains the proofs of Theorems
  \ref{poly1} and \ref{poly2}, which follow quickly from some lemmas
  of Jiang and Zieve \cite{JZ} about B\"ottcher coordinates and formal
  power series.  We then prove Theorem \ref{av-thm} in Section
  \ref{abelian-v} using standard results on abelian varieties along
  with a theorem from \cite{Ok1}.  We conclude the paper in Section
  \ref{further-sec} with some questions.

\vskip2mm
\noindent {\em Acknowledgments.}  We would like to thank Alex Carney,
Dragos Ghioca, Wade Hindes, Liang-Chung Hsia, Patrick Ingram, Shu
Kawaguchi, Fedor Pakovich, Juan Rivera-Letelier, Sema Salur, Hexi Ye,
Shouwu Zhang, and Michael Zieve for many helpful conversations.  We
would also like to thank the Simons Foundation and the University of
Paderborn for hosting conferences at which some of the problems here
were discussed.

\section{Preliminaries}\label{prelim}

We give a brief overview of the basics of semigroups.  Let $\cS$ be a finitely generated semigroup and let $S$ be a finite set of generators for $\cS$.  Then we can form the \emph{growth function} of $\cS$ with respect to the generating function $S$ as follows.  We define $d_S(n)=|S^{\le n}|$, where $S^{\le n}$ is the set of elements of $\cS$ that can be expressed as a product of elements of $S$ of length at most $n$.  The function $d_S(n)$ is weakly increasing as a function of $n$ and while this function depends upon our choice of generating set, we observe that if $T$ is another generating set for $\cS$ then there exists a positive integer $a$ such that $T\subseteq S^{\le a}$ and $S\subseteq T^{\le a}$ and so we have the inequalities
$$d_S(n)\le d_T(an)\qquad {\rm and}\qquad d_T(n)\le d_S(an).$$ Thus if we declare that two weakly increasing functions $f,g:\mathbb{N}\to \mathbb{N}$ are \emph{asymptotically equivalent} if there is a positive integer $C$ such that $f(n)\le g(Cn)$ and $g(n)\le f(Cn)$ then the growth function is independent of our choice of generating set up to this asymptotic equivalence.  
Given a finitely generated semigroup $\cS$ with finite generating set
$S$, we say that $\cS$ has \emph{polynomially bounded growth} if
$d_S(n)={\rm O}(n^{\kappa})$ for some positive constant $\kappa$; we
say that $\cS$ has \emph{linear growth} if there are positive
constants $C_1, C_2$ such that $C_1n \leq d_S(n) \leq C_2 n$ for all
$n$; and we say that $\cS$ has \emph{exponential growth} if there is a
positive constant $C>1$ such that $d_S(n)>C^n$ for all $n$
sufficiently large.  It is not difficult to check that the properties
of having linear growth, polynomially bounded growth, and exponential
growth are all preserved under asymptotic equivalence and so we can
speak unambiguously of $\cS$ having these properties without making
reference to a generating set.

A semigroup $\cS$ is \emph{left cancellative} if whenever $ax=ay$ with
$a,x,y\in \cS$ we have $x=y$; right cancellativity is defined
analogously. A \emph{cancellative} semigroup is one that is both left
and right cancellative.  Note that if $\cS$ is a semigroup of
surjective maps, then $\cS$ is right cancellative since $xa = ya$
implies that $x=y$ whenever $a$ is surjective.  Hence, in particular,
semigroups of nonconstant rational functions are right cancellative;
on the other hand $X^2 \circ (-X) = X^2 \circ (X)$, so semigroups of
rational functions are not always left cancellative.  

We will introduce the theory of height functions in the next section.
It seems more natural to us to present them in the context of the proof of
Theorem \ref{common} than to do so in advance.

\section{Proof of Theorem \ref{common}}\label{common-sec}

\subsection{A variant of the ping-pong lemma} \label{ping}

In our work here, the functions $\tau$, $\tau_f$, and $\tau_g$ will be
some sort of real-valued height functions (either Weil or Moriwaki).
The arguments in this section work in a more general setting, and we
state Proposition \ref{free} accordingly.  

Let $\cU$ be a set, let $f,g: \cU \lra \cU$ be surjective maps, and
let $\tau: \cU \lra \R$ be any function that is not bounded in
absolute value.
Suppose that there are positive real numbers $d_1, d_2 > 1$, and a
real number $C$ such that
\begin{equation} \label{bound}
 \begin{split}
   |\tau(f(z)) - d_1 \tau(z)| < C, \\
   |\tau(g(z)) - d_2 \tau(z)| < C.
 \end{split}
\end{equation}
We say that $d_1$ is the {\em degree} of $f$ and $d_2$ is the {\em
  degree} of $g$ and write $\deg f = d_1$, $\deg g = d_2$.

Recall that one can use \eqref{bound} to construct canonical functions
  as follows
  \begin{equation}\label{can}
 \begin{split}
  \tau_f(z) = \lim_{n \to \infty} \frac{\tau(f^n(z))}{d_1^n} =
  \tau(z) + \sum_{i=0}^\infty \frac{\tau(f^{i+1}(z)) -  d_1 \tau(f^i(z))}{d_1^{i+1}},
  \\
   \tau_g(z) = \lim_{n \to \infty} \frac{\tau(g^n(z))}{d_2^n} =
  \tau(z) + \sum_{i=0}^\infty \frac{\tau(g^{i+1}(z)) -  d_2 \tau(g^i(z))}{d_2^{i+1}}.
\end{split}
\end{equation}
Note that
\[ \left| \sum_{i=0}^\infty \frac{\tau(f^{i+1}(z)) -
    d_1\tau(f^i(z))}{d_1^{i+1}} \right| < C \sum_{i=1}^\infty \frac{1}{d_1^i} \] and
\[ \left|  \sum_{i=0}^\infty \frac{\tau(g^{i+1}(z)) -
    d_2 \tau(g^i(z))}{d_2^{i+1}} \right|  < C \sum_{i=1}^\infty
  \frac{1}{d_2^i}.  \]
The telescoping sum argument above is due to Tate and was used by Call
and Silverman \cite{CS93} in their construction of canonical heights.
Kawaguchi \cite{Kawa1, Kawa2} has further developed the theory of canonical
heights in the context of semigroups.

Let $d = \min(d_1, d_2)$, and let
\[ C ' = \sum_{i=1}^\infty \frac{1}{d^i}. \]

Using the same
telescoping series argument, we see that for any $n$, we have
\begin{equation}\label{close}
  \begin{split}
  \left| \frac{\tau(f^n(z))}{d_1^n} - \tau_f(z) \right| < \frac{C'}{d_1^n}, \\
  \left| \frac{\tau(g^n(z))}{d_2^n} - \tau_g(z) \right| < \frac{C'}{d_2^n}.
\end{split}
\end{equation}

We will prove the following variant of Tits' ping-pong lemma (see \cite{Tits}).  

\begin{prop}\label{free}
  Let $\cU$ be a set, let $\tau: \cU \lra \bR$ be a function that is
  unbounded in absolute value and let $f,g: \cU \lra \cU$ be
  surjective maps that satisfy
  \eqref{bound}.  Let $\tau_f$ and $\tau_g$ be as defined in
  \eqref{can}.  Suppose that there is some $z \in \cU$ such that
  $\tau_f(z) \not= \tau_g(z)$.  Then there is a positive integer $j$ such that
\begin{itemize}
\item[(i)] we have $af^j \not= b
g^j$ for all $a,b \in \langle f, g \rangle$; and
\item[(ii)] the semigroup $\langle f^j, g^j \rangle$ is a free semigroup on two generators.
\end{itemize}
\end{prop}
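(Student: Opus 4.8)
The plan is to exploit the fact that the canonical heights $\tau_f$ and $\tau_g$ transform exactly under $f$ and $g$: we have $\tau_f(f(z)) = d_1 \tau_f(z)$ and $\tau_g(g(z)) = d_2 \tau_g(z)$, and moreover $|\tau_f - \tau_g|$ is bounded (both differ from $\tau$ by a bounded amount, by the estimates following \eqref{can}), say $|\tau_f(w) - \tau_g(w)| < M$ for all $w \in \cU$. Fix the point $z_0$ with $\tau_f(z_0) \neq \tau_g(z_0)$; by replacing $z_0$ with a suitable forward image under $f$ or $g$ and rescaling, or simply by noting that $\tau_f$ and $\tau_g$ are unbounded, I may assume $|\tau_f(z_0)|$ and $|\tau_g(z_0)|$ are as large as I like. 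The key inequality is this: if $w$ is any point with $\tau_f(w)$ much larger in absolute value than $\tau_g(w)$ — more precisely, once $|\tau_f(w)|$ exceeds some threshold depending only on $M$ and the $d_i$ — then applying $g^j$ multiplies $\tau_g$ by $d_2^j$ while $\tau_f(g^j(w))$ stays comparable to $\tau_f(w)$ (since $|\tau_f - \tau_g| < M$ forces $|\tau_g(w)|$ to be large too, hence $|\tau_g(g^j w)| = d_2^j |\tau_g(w)|$ is even larger, hence $|\tau_f(g^j w)|$ is large). The upshot is a region $\Omega_f = \{w : |\tau_f(w)| > T,\ |\tau_f(w)| > |\tau_g(w)|\}$ and a symmetric region $\Omega_g$, together with a choice of $j$ large enough that $f^j(\Omega_f \cup \Omega_g) \subseteq \Omega_f$ and $g^j(\Omega_f \cup \Omega_g) \subseteq \Omega_g$, with $\Omega_f \cap \Omega_g = \emptyset$. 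This is exactly the ping-pong configuration, but for a semigroup rather than a group, so I only need the two forward-invariance inclusions and disjointness, not invertibility.

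Next I would carry out the bookkeeping to choose $T$ and $j$ explicitly. Starting from $z_0$, note that $\tau_f(z_0) \neq \tau_g(z_0)$ means one of them, say WLOG $\tau_f(z_0)$, is strictly larger in absolute value; then for $n$ large, $f^n(z_0)$ has $|\tau_f| = d_1^n|\tau_f(z_0)| \to \infty$ while $|\tau_g(f^n z_0)| \le |\tau_f(f^n z_0)| + M$, and a short computation shows $|\tau_g(f^n z_0)|$ actually grows like $d_1^n$ as well but stays strictly below $|\tau_f|$ once we account for the fixed gap — here I need to be slightly careful and it is cleaner to first pass to a point where the ratio $|\tau_f|/|\tau_g|$ is bounded away from $1$, which I can arrange since the gap $|\tau_f(z_0) - \tau_g(z_0)|$ is a fixed positive number while I can push $|\tau_f(z_0)|$ up by iterating. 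Once $\Omega_f$ is known to be nonempty (it contains $f^n(z_0)$ for $n$ large) and $g^j(\Omega_f) \subseteq \Omega_g$, $f^j(\Omega_f) \subseteq \Omega_f$, and symmetrically, a standard induction on word length shows that distinct words in $f^j, g^j$ send a fixed point of $\Omega_f$ (say) to points distinguished by which of $\Omega_f, \Omega_g$ they land in at each stage, reading the word from the left — the first letter determines the final region, and one strips letters off inductively. This gives both (i) and (ii): if $af^j = bg^j$ as maps for $a, b \in \langle f,g\rangle$, apply both sides to a point deep in $\Omega_f$; tracking membership in $\Omega_f$ versus $\Omega_g$ after the innermost $f^j$ resp.\ $g^j$ and then observing both $a$ and $b$ preserve largeness of $\tau_f + \tau_g$ yields a contradiction, and freeness of $\langle f^j, g^j\rangle$ is the special case $a, b \in \langle f^j, g^j \rangle$ together with the cancellation already built into the ping-pong argument.

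The step I expect to be the main obstacle is making the region definitions robust enough that the forward inclusions $f^j(\Omega_f \cup \Omega_g) \subseteq \Omega_f$ genuinely hold for a single $j$ working for all points simultaneously — the difficulty is that a point in $\Omega_g$ can have $\tau_f$ component of either sign and of size comparable to its (large, negative or positive) $\tau_g$ component, so after applying $f^j$ one must check that $|\tau_f|$ has grown enough to again dominate $|\tau_g|$, which itself grew by a factor close to $d_1^j$. Because the multipliers are the \emph{same} $d_1$ for both coordinates under $f^j$, the ratio $|\tau_f|/|\tau_g|$ is nearly preserved, not improved, by a single application; the fix is to use that on $\Omega_g$ we start with $|\tau_g| > |\tau_f|$ and after $f^j$ the error terms (bounded by $C'$, from \eqref{close}, in the non-canonical model, or exactly zero in the canonical model) are negligible against $d_1^j \cdot (\text{large})$, so in fact I should work directly with the exact canonical heights where $\tau_f \circ f = d_1 \tau_f$ on the nose, choose $\Omega_f$ using a strict gap like $|\tau_f| > 2|\tau_g|$ and $|\tau_f| > T$, and verify that one application of $g^j$ with $d_2^j > 4$ already turns $|\tau_f| \le 2|\tau_g| $-type points into points with $|\tau_g|$ hugely dominant. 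Once $j$ is pinned down this way, the rest is the routine ping-pong induction.
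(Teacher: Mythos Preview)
Your approach has a genuine gap that you partially diagnose but do not resolve. The decisive obstruction is that $|\tau_f - \tau_g|$ is \emph{uniformly bounded} on all of $\cU$: both $\tau_f$ and $\tau_g$ differ from $\tau$ by at most $C'$, so $|\tau_f(w) - \tau_g(w)| \le 2C'$ for every $w$. Hence whenever $|\tau_f(w)|$ is large, $|\tau_g(w)|$ is nearly the same size and the ratio $|\tau_f(w)|/|\tau_g(w)|$ is forced toward $1$. Your proposed region $\{\,|\tau_f| > T,\ |\tau_f| > 2|\tau_g|\,\}$ is therefore empty once $T$ exceeds a fixed multiple of $C'$, and no multiplicative-gap condition can ever hold. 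You correctly observe that applying $f^j$ rescales both canonical heights by essentially the same factor $d_1^j$, so the ratio is ``nearly preserved, not improved''; but the cure you propose (a strict gap $|\tau_f|>2|\tau_g|$) is exactly what this observation rules out. Concretely, applying $g^j$ to $z$ gives $\tau_g(g^j z) = d_2^j \tau_g(z)$ while $\tau_f(g^j z)$ lies within $2C'$ of that same number, so $|\tau_g|$ never becomes ``hugely dominant'' over $|\tau_f|$; the two become asymptotically equal. An additive-gap version fails for the same reason: $\tau_f(f^j z) - \tau_g(f^j z)$ can land anywhere in $[-2C', 2C']$ regardless of the sign of $\tau_f(z) - \tau_g(z)$, so no ping-pong table in $\cU$ can be built from $\tau_f$ and $\tau_g$ alone.

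The paper's proof uses a different invariant: the \emph{normalized} height $\tau(w(\beta))/\deg w$ at a fixed test point $\beta$ with $\epsilon := |\tau_f(\beta) - \tau_g(\beta)| > 0$. A telescoping estimate (the lemma immediately preceding the proof, equation \eqref{w}) shows that this normalized quantity is determined, up to error $C'/d^j$, by the rightmost $j$ letters of the word $w$ (i.e.\ the letters acting first on $\beta$). Choosing $j$ with $C'/d^j < \epsilon/4$, any word of the form $af^j$ has normalized height within $\epsilon/2$ of $\tau_f(\beta)$, and any word $bg^j$ within $\epsilon/2$ of $\tau_g(\beta)$. Since two equal elements of $\langle f,g\rangle$ have equal degree (this is where unboundedness of $\tau$ is used), an equality $af^j = bg^j$ would force equality of the normalized heights at $\beta$, a contradiction; this gives (i) directly, for arbitrary $a,b\in\langle f,g\rangle$. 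Part (ii) then follows by induction on word length together with right cancellation (surjectivity). The idea you are missing is the division by $\deg w$: it makes the contribution of the outer letters decay geometrically, so that only the innermost block of length $j$ matters.
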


We begin with one more definition.  Let $w= \varphi_m \cdots
\varphi_1$, where each $\varphi_j$ equals $f$ or $g$.  We define the
degree of $w$ as 
\begin{equation}\label{deg}
\deg w = \prod_{j=1}^m \deg \varphi_j.
 \end{equation}
 Since $|\tau(w(z)) - \deg w \tau(z)|$ is bounded for all $z$ (by
 \eqref{bound}) and $\tau$ is unbounded, we see that the definition in
 \eqref{deg} is independent of the word representing $w$.

\begin{lem}
  Let $w = \varphi_m \cdots \varphi_1$ where $\varphi_i$ is equal
  to $f$ or $g$ for each $i$.  Let $s_i = \varphi_i \cdots \varphi_1$ (for $i \leq
  m$).  
    With notation as above, we have
   \begin{equation}\label{w}
     \left| \frac{\tau(w(z))}{\deg w}  - \frac{\tau(s_j(z))}{\deg s_j} \right| <
     \frac{C'}{d^j}  
   \end{equation}
   for $j = 1, \dots, m$.  
  \end{lem}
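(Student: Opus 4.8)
The plan is to reuse the telescoping (Tate) argument that produced $\tau_f$ and $\tau_g$ in \eqref{can}, but to run it along the chain of partial words $s_j, s_{j+1}, \dots, s_m = w$ instead of along the iterates of a single map. The two ingredients needed are both already in hand: the degree is multiplicative along words (this is exactly \eqref{deg} together with the remark that $\deg w$ does not depend on the chosen representative), so $\deg s_{i+1} = (\deg \varphi_{i+1})\deg s_i$ and hence $\deg s_i = \prod_{k=1}^{i}\deg\varphi_k \ge d^i$ for all $i$; and $s_{i+1} = \varphi_{i+1}\circ s_i$, so a single step of the chain is governed by applying \eqref{bound} to the letter $\varphi_{i+1}\in\{f,g\}$ at the point $s_i(z)$.

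Concretely, I would first write the left side of \eqref{w} as the telescoping sum
\[
 \frac{\tau(w(z))}{\deg w} - \frac{\tau(s_j(z))}{\deg s_j}
 \;=\; \sum_{i=j}^{m-1}\left( \frac{\tau(s_{i+1}(z))}{\deg s_{i+1}} - \frac{\tau(s_i(z))}{\deg s_i} \right),
\]
using $s_m = w$. Since $s_{i+1}(z) = \varphi_{i+1}(s_i(z))$ and $\deg s_{i+1} = (\deg\varphi_{i+1})\deg s_i$, the $i$-th summand equals
\[
 \frac{1}{\deg s_i}\left( \frac{\tau(\varphi_{i+1}(s_i(z)))}{\deg\varphi_{i+1}} - \tau(s_i(z)) \right),
\]
and \eqref{bound}, applied to $\varphi_{i+1}$ at the point $s_i(z)$, bounds the quantity in the outer parentheses by $C/\deg\varphi_{i+1}\le C/d$ in absolute value. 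Combined with $\deg s_i \ge d^i$, the $i$-th summand is therefore less than $C/d^{i+1}$ in absolute value.

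Summing this geometric estimate from $i=j$ to $m-1$ and then extending the sum to infinity gives
\[
 \left| \frac{\tau(w(z))}{\deg w} - \frac{\tau(s_j(z))}{\deg s_j}\right|
 \;<\; \sum_{i=j}^{m-1}\frac{C}{d^{i+1}} \;<\; \frac{1}{d^{j}}\sum_{i=1}^{\infty}\frac{C}{d^{i}} \;=\; \frac{C'}{d^{j}},
\]
which is exactly \eqref{w}; here $C'$ absorbs the additive constant $C$ in precisely the same way as in the passage from \eqref{can} to \eqref{close}. I do not expect any genuine obstacle here, as the argument is pure bookkeeping; if anything is the crux, it is lining up the indices of the telescoping chain $s_j,\dots,s_m$ and keeping track of the uniform lower bound $\deg s_i \ge d^i$, both of which follow immediately from the multiplicativity of $\deg$ on words recalled above.
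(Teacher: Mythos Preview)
Your argument is correct and is essentially identical to the paper's own proof: both write $\frac{\tau(w(z))}{\deg w} - \frac{\tau(s_j(z))}{\deg s_j}$ as the telescoping sum $\sum_{i=j}^{m-1}\bigl(\tau(s_{i+1}(z)) - (\deg\varphi_{i+1})\tau(s_i(z))\bigr)/\deg s_{i+1}$, bound each numerator by $C$ via \eqref{bound}, use $\deg s_{i+1}\ge d^{i+1}$, and sum the resulting geometric series. The only cosmetic difference is that you factor the summand as $\frac{1}{\deg s_i}\cdot\frac{1}{\deg\varphi_{i+1}}\bigl(\tau(\varphi_{i+1}(s_i(z)))-(\deg\varphi_{i+1})\tau(s_i(z))\bigr)$ before bounding, whereas the paper bounds the combined fraction directly.
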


  \begin{proof}
   Let $e_\ell = \deg \varphi_\ell$ for each $\ell =1, \dots, m$.  Then $\deg s_j
   =\prod_{\ell = 1}^{j} e_\ell$.  Thus, as in \eqref{bound}, we have a
   telescoping series
   \begin{equation}\label{tele}  \frac{\tau(w(z))}{\deg w}  - \frac{\tau(s_i(z))}{\deg s_i} =
     \sum_{j=i}^{m-1} \frac{\tau(s_{j+1}(z)) - e_{j+1}
       \tau(s_j(z))}{\prod_{\ell = 1}^{j+1} e_\ell}.
   \end{equation}
   Now
   \[ |\tau(s_{j+1}(z)) - e_{j+1} \tau(s_j(z))| = |\tau(\varphi_{j+1}(s_j(z)) -
     (\deg \varphi_{j+1}) \tau(s_j(z)) | < C \]
   for all $j$ by \eqref{bound}.  Thus
   \[   \left| \sum_{j=i}^{m-1} \frac{\tau(s_{j+1}(z)) - e_{j+1}
       \tau(s_j(z))}{\prod_{\ell = 1}^{j+1} e_\ell} \right| \leq
   \frac{1}{d^j} \sum_{i=1}^\infty \frac{C}{d^i} \leq \frac{C'}{d^j}. \]
This completes the proof, by \eqref{tele}.  
 \end{proof}

 Now, we are ready to prove Proposition \ref{free}.  
 \begin{proof}[Proof of Proposition \ref{free}]
We choose $\beta$ so that $\tau_f(\beta) \not=
   \tau_g(\beta)$.    Let $\epsilon = |\tau_f(\beta) - \tau_g(\beta)|$.  Choose
   $j$ so that $C'/d^j < \epsilon/4$, where $d = \min(d_1, d_2)$ as
   above.  Let $a f^j$ and $b g^j$ be words in $f$ and $g$ such
   that $\deg a f^j = \deg b g^j$.
   Then, by \eqref{close} and \eqref{w}, we we have
   \begin{equation*}
     \begin{split}
     \left| \tau_f(\beta) - \frac{\tau(a f^j(\beta))}{\deg a f^j} \right| <
     \epsilon /2, \\
     \left |\tau_g(\beta) - \frac{\tau(b g^j(\beta))}{\deg b g^j} \right| <
     \epsilon /2.
\end{split}
   \end{equation*} 
     Thus,
     \[ \frac{\tau(a f^j(\beta))}{\deg a f^j} \not= \frac{\tau(b
         g^j(\beta))}{\deg b g^j}.\]
     Since  $\deg a f^j = \deg b g^j$, this means that
     \begin{equation}\label{not} 
       a f^j(\beta) \not= b g^j(\beta).
     \end{equation}

   Now, let $u = \varphi_m \cdots \varphi_1$ and $w = \theta_n \cdots
   \theta_1$, where each $\varphi_i$ and $\theta_k$ is equal to $f^j$
   or $g^j$.  Suppose that $u = w$.  We will show by induction on
   $\max(m,n)$ that $m =n $ and $\theta_i = \varphi_i$ for $i=1,
   \dots, m$.  If $m = n =1$, then we must have $\theta_1 = \varphi_1$
   since $f^j \not= g^j$, because $\tau_f \not= \tau_g$.  For the inductive step, it will
   suffice to show that $\varphi_1 = \theta_1$ since we may then
   cancel (as $f^j$
   and $g^j$ are surjective) to obtain $\varphi_m \cdots \varphi_2
   = \theta_n \cdots \theta_2$ and apply the inductive hypothesis.  We
   argue by contradiction.  
   If $\varphi_1 \not= \theta_1$, then we may assume without
   loss of generality that $\varphi_1 = f^j$ and $\theta_1 =
   g^j$.  But then since $\deg u = \deg w$ (because $u = w$), we
   must have 
 \[ \varphi_m \cdots \varphi_1(\beta) \not= \theta_n \cdots
   \theta_1 (\beta), \]
 by \eqref{not}, a contradiction, so  $\theta_1 = \varphi_1$, and our proof is
complete.

 \end{proof}

 \subsection{Height functions}\label{height-section}

 We will prove Theorem \ref{common} by letting $\tau$ be a height
 function, either a Weil height $h$ or a Moriwaki height $\fh$, and
 using Proposition \ref{free}, using the fact that the canonical
 heights attached to these will be zero at exactly the points that are
 preperiodic.  There may be other sorts of functions where Proposition
 \ref{free} may be used though.  For example, if we let $\tau: \bC
 \lra \bR$ be defined by $\tau(z) = \log (\max |z|, 0)$, and $f,g \in
 \bC[x]$ are polynomials of degree greater than 1, we see that $\tau_f$ and
 $\tau_g$ vanish precisely on the filled Julia sets of $f$ and $g$
 respectively. Since the Julia set is simply the boundary of the
 filled Julia set, Proposition \ref{free} thus implies that if the
 Julia sets of $f$ and $g$ are not equal, then there is a $j$
 such that $\langle f^j, g^j \rangle$ is a free semigroup on two generators.

 For a more general exposition of the Weil height functions, see \cite{DioGeo}
 and \cite{BG}.  The Moriwaki height functions we use were introduced
 in \cite{Mori1, Mori2}.  

 Let $V$ be a projective variety and let $f, g: V \lra V$.  Since $V$
 is finitely presented, there is a finitely generated field $K$ such
 that $f$, $g$, and $V$ are all defined over $K$.  If $K$ is not
 finite, then there is a set $\bM_K$ of nontrivial absolute values $| \cdot
 |_v$ on $K$ along with positive integers $e_v$ such that the product
 formula
 \[ \prod_{v \in \bM_K} |z|_v^{e_v} = 1 \]
 holds for all nonzero $z \in K$.  

 When $K$ is a number field, these are
 simply the usual archimedean and $p$-adic absolute values, suitably
 normalized.  When $K$ is a function
 field over a field $k$, we choose the absolute values from prime divisors on a
 variety $V$ over $k$ whose function field is a finite extension of
 $\bQ$ when we are in characteristic 0 and a finite extension of
 $\F_p$ when we are in characteristic $p$.   The set of $x \in K$ such
 that $|x|_v = 1$ for all $v \in \bM_K$ is called the {\em field of
   constants}.  

 By extending the $| \cdot |_v$ to $\Kbar$, we obtain a Weil height
 function on the projective space $\bP^n$ by defining
 \[ h_{\bP^n}(z_0: \dots: z_n) =  \frac{1}{m} \sum_{v \in \bM_K} \sum_{i=1}^m \log
   \max (|z_0^{[i]}|_v, \cdots, |z_n^{[i]}|_v) \]
 where $(z_0^{[i]}: \cdots:z_n^{[i]})$, $i=1, \dots, m$ is the set
 of conjugates of $(z_0: \cdots: z_n)$ in $\Kbar$ over $K$ (note that
 while this does depend on our choice of coordinates, a change of
 coordinates will only change the definition by a bounded constant --
 see \cite{DioGeo} or \cite{BG} for details).  

 When $\CL$ is an ample line bundle on $V$, we can associate a height
 function to $h$ to $\CL$ by letting $\iota: V \lra \bP^n$ be an
 embedding such that $\iota^* \cO_{\bP^n}(1) =\CL^{\bigotimes e}$ (such an $\iota$ and
 $e$ exist when $\CL$ is ample) and taking $h_\CL(z) = \frac{1}{e} h_{\bP^n}
 (\iota(z))$.  

If $\CL$ is an ample line bundle on $V$ with associated height function $h_\CL$  and
$\varphi^*\CL \cong \CL^{\bigotimes d}$, where $d >1$, we have
 \begin{equation}\label{Weil-bound}
   |h_\CL(\varphi(z)) - d h_\CL(z)| < C
   \end{equation}
   for all $V(\Kbar)$. We can attach a
   canonical height to $\varphi$ as in \eqref{can} (see \cite{CS93}) by letting
   $h_\varphi(z) = \lim_{n \to \infty} \frac{h_\CL(\varphi^n(z))}{d^n}$.  

Note that $h_\varphi(\varphi(z)) = d h_\CL(z)$ by construction, so if $z
\in \Prep(\varphi)$, then clearly $h_\varphi(z) = 0$. 
   
When $K$ is a number field or a finitely generated function field of characteristic $p$
with a finite field of constants and $h$ is a height function associated
to an ample line bundle $\CL$, we have the following (\cite{Northcott},
\cite[Section 1.2]{BakerFinite}). 

\begin{thm}\label{Northcott} (Northcott)
  Let $K$ be a number field or finitely generated function field in
  characteristic $p$.  Let $h_\CL$, $\varphi$, and $h_\varphi$ be as
  above; when $K$ is a function field, assume that its field of
  constants is finite.  For any constants $A$ and $B$ there are at
  most finitely many $z \in V(\Kbar)$ such that $h_\CL(z) \leq A$ and
  $[K(z): K] \leq B$.  Since $|h_\varphi - h_\CL|$ is bounded, this
  means that $h_\varphi(z) = 0$ if and only if $z \in \Prep(\varphi)$.
\end{thm}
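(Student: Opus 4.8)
The plan is to establish the Northcott finiteness assertion first and then read off the characterization of preperiodic points as a formal consequence. The first step is a reduction to projective space: by the construction of $h_\CL$ recalled above, $h_\CL(z) = \tfrac{1}{e}\, h_{\bP^n}(\iota(z))$ for a fixed embedding $\iota\colon V \hookrightarrow \bP^n$ and a fixed integer $e \geq 1$, and the embedding $\iota$ does not change the field $K(z)$; hence it suffices to show that for fixed $A$ and $B$ there are only finitely many $w \in \bP^n(\Kbar)$ with $h_{\bP^n}(w) \leq A$ and $[K(w):K] \leq B$. It is worth recording at the outset the easy half of the final claim, which uses no finiteness: if $z$ is preperiodic, then its forward orbit under $\varphi$ is finite, so $h_\varphi$ takes only finitely many values on it, while $h_\varphi(\varphi^n(z)) = d^n h_\varphi(z)$ with $d > 1$, which forces $h_\varphi(z) = 0$.

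For the finiteness statement over a number field $K$ this is the classical theorem of Northcott, and I would recall the standard argument. Writing $w = (w_0 : \cdots : w_n)$ with coordinates in $L = K(w)$, each ratio $w_i/w_j$ is an algebraic number whose logarithmic Weil height is bounded in terms of $A$ and whose degree over $\bQ$ is at most $B[K:\bQ]$; the one-variable Northcott theorem — bounded height together with bounded degree forces the coefficients of the minimal polynomial, being symmetric functions of the conjugates, into a finite set via Mahler measure estimates — then leaves finitely many possibilities for each coordinate ratio, hence for $w$. When $K$ is a finitely generated function field of characteristic $p$ with finite field of constants, $K$ is the function field of a variety over a finite field and the same passage to coordinate ratios applies; the one-variable input is now the finiteness of a Riemann--Roch type space of functions with pole divisor of bounded degree over a finite ground field, which is a finite set precisely because that ground field is finite. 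Alternatively one cites \cite{Northcott} and \cite[Section 1.2]{BakerFinite} directly. The finite-constant-field hypothesis is exactly what makes this work: over $\bC(t)$ the constants alone already give infinitely many points of height zero, which is why that case is later handled via Moriwaki heights instead.

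With the finiteness statement in hand, the remaining implication is immediate. Suppose $h_\varphi(z) = 0$; then $h_\varphi(\varphi^n(z)) = d^n h_\varphi(z) = 0$ for every $n \geq 0$, and since $|h_\varphi - h_\CL|$ is bounded by some constant $C$, we obtain $h_\CL(\varphi^n(z)) \leq C$ for all $n$. Because $\varphi$ and $V$ are defined over $K$, every $\varphi^n(z)$ lies in $V(K(z))$, so $[K(\varphi^n(z)):K] \leq [K(z):K]$; applying the finiteness statement with $A = C$ and $B = [K(z):K]$ shows that $\{\varphi^n(z) : n \geq 0\}$ is finite, i.e.\ $z \in \Prep(\varphi)$. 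The only point requiring genuine care is the function field case of the Northcott property — in particular, checking that the normalization of $h$ coming from the chosen prime divisors, together with the finiteness of the constant field, really does yield the finiteness statement. Everything else is either the classical number-field argument or a formal manipulation of the functional equation $h_\varphi \circ \varphi = d\, h_\varphi$ together with the bound $|h_\varphi - h_\CL| = O(1)$; in practice I would dispatch the function field case by reference to the literature rather than reproving it.
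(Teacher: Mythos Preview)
The paper does not prove this theorem: it is stated with citations to \cite{Northcott} and \cite[Section 1.2]{BakerFinite} and then used as a black box. Your proposal is a correct and more detailed sketch than anything appearing in the paper --- the reduction to $\bP^n$ via the embedding defining $h_\CL$, the classical Mahler-measure/symmetric-function argument over number fields, the Riemann--Roch finiteness over function fields with finite constant field, and the deduction of the preperiodic-point characterization from the functional equation $h_\varphi\circ\varphi = d\,h_\varphi$ together with $|h_\varphi - h_\CL| = O(1)$ are all standard and correctly assembled. Since the paper offers no proof to compare against, there is no divergence of approach to discuss; your final remark that one would in practice cite the literature for the function-field case is exactly what the paper does for the entire statement.
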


Northcott's theorem does not hold over function fields of
characteristic 0 for Weil heights.  However, Moriwaki \cite{Mori1,
  Mori2} has used metrics on line bundles and Arakelov intersection
theory to associate a height function $\fh_\CL$ to an ample line bundle
$\CL$ such that a form of Northcott's theorem does hold.  As with Weil
heights, if $\varphi^*\CL \cong \CL^{\bigotimes d}$ for an ample line
bundle $\CL$, then by construction (see \cite[Section 2.4]{YZ13}) there
is a constant $C$ such that
\begin{equation}\label{Mori-bound}
 |\fh_\CL(\varphi(z)) - d \fh_\CL(z)| < C
  \end{equation}
for all $z \in V(\Kbar)$, where $\fh_\CL$ is a Moriwaki height associated
to $\CL$. We may form a canonical height $\fh_\varphi$ for $\fh_{\CL}$ by
taking the limit
\[ \fh_\varphi(z) = \lim_{n \to \infty}
  \frac{\fh_\CL(\varphi^n(z))}{d^n} .\]
We have the following (see \cite{Mori1, Mori2}).

\begin{thm}\label{Moriwaki} (Moriwaki)
  For any constants $A$ and $B$ there are at most finitely many
  $z \in V(\Kbar)$ such that $\fh_\CL(z) \leq A$ and
  $[K(z): K] \leq B$.  Since $|\fh_\varphi - \fh_\CL|$ is bounded,
  this means that $\fh_\varphi(z) = 0$ if and only if
  $z \in \Prep(\varphi)$.
\end{thm}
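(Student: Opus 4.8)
\textbf{Proof proposal for Theorem \ref{Moriwaki}.}

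The plan is to deduce the statement from Moriwaki's arithmetic Hilbert--Samuel machinery rather than to reprove it from scratch; the two assertions to establish are (a) the Northcott finiteness property for $\fh_\CL$, and (b) the equivalence $\fh_\varphi(z)=0 \iff z\in\Prep(\varphi)$, which follows formally from (a) once the canonical height is in place. For (a), recall that Moriwaki works over a finitely generated field $K$ of transcendence degree $t$ over $\bQ$ (or over a prime field), fixes a choice of projective arithmetic variety $B$ with function field $K$ together with nef hermitian line bundles $\overline{H}_1,\dots,\overline{H}_t$ on $B$ (the ``polarization''), and for an ample $\CL$ on $V$ defines $\fh_\CL$ via arithmetic intersection numbers of a model of $(V,\CL)$ over $B$ against the $\overline{H}_i$. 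The key input is Moriwaki's arithmetic Bogomolov/Hilbert--Samuel inequality (\cite{Mori1, Mori2}), which gives, for each real $A$, an upper bound on the number of $K$-points — and, after a standard argument with Weil restriction or with bounding the arithmetic degree of the closure of a point, on the number of points of bounded degree $[K(z):K]\le B$ — of height $\fh_\CL(z)\le A$. I would state this as: the set $\{z\in V(\Kbar): \fh_\CL(z)\le A,\ [K(z):K]\le B\}$ is finite. Strictly speaking this is exactly Moriwaki's theorem, so here I am transcribing its statement into the language of the present paper and recording that the polarization has been fixed once and for all for $(K,V,\CL)$.

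Next I would build the canonical height. By \eqref{Mori-bound} we have $|\fh_\CL(\varphi(z))-d\,\fh_\CL(z)|<C$ for all $z\in V(\Kbar)$, so the Tate telescoping argument of \eqref{can} (applied with $\tau=\fh_\CL$, $d_1=d$) shows that the limit
\[
\fh_\varphi(z)=\lim_{n\to\infty}\frac{\fh_\CL(\varphi^n(z))}{d^n}
\]
exists for every $z\in V(\Kbar)$, that $|\fh_\varphi(z)-\fh_\CL(z)|\le C/(d-1)$ is uniformly bounded, and that $\fh_\varphi(\varphi(z))=d\,\fh_\varphi(z)$. In particular $\fh_\varphi\ge 0$ would need to be checked: since $\fh_\CL$ is bounded below on each $V(L)$ with $L/K$ finite? — actually I do not need nonnegativity, only the functional equation. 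Note also that $\varphi$ does not increase the field of definition, so $[K(\varphi^n(z)):K]\le[K(z):K]$ for all $n$.

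Finally I would prove the equivalence. If $z\in\Prep(\varphi)$, the forward orbit $\{\varphi^n(z)\}_{n\ge0}$ is finite, hence $\{\fh_\CL(\varphi^n(z))\}$ is bounded, so $\fh_\varphi(z)=\lim \fh_\CL(\varphi^n(z))/d^n=0$. Conversely, suppose $\fh_\varphi(z)=0$. Then for every $n$, $\fh_\varphi(\varphi^n(z))=d^n\fh_\varphi(z)=0$, and since $|\fh_\CL-\fh_\varphi|$ is bounded by some constant $C'$ we get $\fh_\CL(\varphi^n(z))\le C'$ for all $n$. Moreover $[K(\varphi^n(z)):K]\le B:=[K(z):K]$ for all $n$. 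By the finiteness statement (a) with $A=C'$ and this $B$, the set $\{\varphi^n(z):n\ge0\}$ is contained in a finite set, so two iterates coincide, i.e. $z$ is preperiodic. This completes the proof.

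The main obstacle is purely expository: formula \eqref{Mori-bound} and the Northcott property both presuppose a fixed choice of nef polarizing hermitian line bundles on a model $B$, and one must make sure the same choice is used for $\fh_\CL$, for all the $\fh_\varphi$, and across the finitely many maps appearing later in Theorem \ref{common}; the analytic content — existence of the limit, boundedness of $\fh_\varphi-\fh_\CL$, the functional equation, and the orbit-degree bound — is entirely routine once that choice is pinned down, and the genuinely hard finiteness input is quoted directly from \cite{Mori1, Mori2}.
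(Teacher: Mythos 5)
Your proposal is correct and follows essentially the route the paper intends: the paper states this theorem without proof, quoting \cite{Mori1, Mori2} for the Northcott-type finiteness, and the equivalence $\fh_\varphi(z)=0 \iff z\in\Prep(\varphi)$ is exactly the standard argument you give (Tate's telescoping limit, bounded difference $|\fh_\varphi-\fh_\CL|$, the functional equation $\fh_\varphi(\varphi(z))=d\,\fh_\varphi(z)$, the degree bound $[K(\varphi^n(z)):K]\le[K(z):K]$, and finiteness of the orbit). Your remark about fixing the polarization data on the model $B$ once and for all is a fair expository caveat but does not change the substance.
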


Now we are ready to prove Theorem \ref{common}.  We use Weil heights
to treat the case where the field $K$ is a number field or function
field of characteristic $p$.  For function fields of characteristic 0,
we use Moriwaki heights.  Note that we could treat the case of
rational functions over function fields of characteristic 0 using Weil
heights rather than Moriwaki heights, since Baker \cite{BakerFinite}
has proved a dynamical form of Northcott's theorem for Weil heights in
the case of rational functions, assuming a non-isotriviality
condition.  This may be possible in higher dimensions, too, as there
are more general dynamical Northcott-type results for non-isotrivial maps due to
Chatzidakis and Hrushovski \cite{CH1, CH2} (see also
\cite{GV}), but non-isotriviality
conditions there are a good deal more complicated.
   
 \begin{proof}[Proof of Theorem \ref{common}]

   If $K$ is a number field or a function field of characteristic $p$
   endowed with absolute values having a finite field of constants, we
   let $\tau = h_\CL$ and apply Proposition \ref{free} with
   $\tau_f = h_f$ and $\tau_g = h_g$.  Note that we meet the
   conditions of the proposition since \eqref{Weil-bound} holds.
   Thus, if $h_f \not= h_g$, then there is a $j$ such that
   $\langle f^j, g^j \rangle$ is a free semigroup on two generators.
   By Theorem \ref{Northcott}, if $\Prep(f) \not= \Prep(g)$, then there
   is a $z \in V(\Kbar)$ such that exactly one of $h_f(z)$ and
   $h_g(z)$ is zero, which means that $h_f$ cannot equal $h_g$.

   If $K$ is a function field, we argue the same way, letting
   $\tau = \fh_\CL$ with $\tau_f = \fh_f$ and $\tau_g = \fh_g$.  The
   conditions of Proposition \ref{free} hold by \eqref{Mori-bound} and
   the existence of a $z \in V(\Kbar)$ that is preperiodic for exactly
   one of $f$ and $g$ implies the existence of a $z \in V(\Kbar)$ such
   that $\fh_f(z) \not= \fh_g(z)$ (by Theorem \ref{Moriwaki}), which
   in turn implies that there is a $j$ such that
   $\langle f^j, g^j \rangle$ is a free semigroup on two generators by Proposition \ref{free}.

\end{proof}

The following corollary answers a conjecture posed by Cabrera and
Makienko \cite{CM}.  

\begin{cor}\label{same-measure}
  Let $f,g \in \bC(x)$ both have degree greater than 1.  Let $d \mu_f$
  and $d \mu_g$ be the measures of maximal entropy for $f$ and $g$,
  respectively.  If $d \mu_f \not= d \mu_g$ then here is a
  $j$ such that $\langle f^j, g^j \rangle$ is a free semigroup on two generators.
\end{cor}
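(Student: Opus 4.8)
The plan is to deduce Corollary~\ref{same-measure} directly from Theorem~\ref{common}, with the main work being a translation between the analytic language of measures of maximal entropy and the arithmetic language of preperiodic points. First I would recall that for a rational function $f \in \bC(x)$ of degree $d_f > 1$, the map $f$ acting on $\bP^1$ is polarized by $\cO_{\bP^1}(1)$, since $f^*\cO_{\bP^1}(1) \cong \cO_{\bP^1}(1)^{\bigotimes d_f}$; thus Theorem~\ref{common} applies verbatim to the pair $f, g$ over $\bC$ (which is a finitely generated field extension of $\bQ$ after descending to a field of definition, so the height machinery of Section~\ref{height-section} is available). It therefore suffices to show that $d\mu_f \not= d\mu_g$ implies $\Prep(f) \not= \Prep(g)$.

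The key step is the following equidistribution/support fact: the measure of maximal entropy $d\mu_f$ is the unique $f$-invariant measure of maximal entropy, its support is the Julia set $J(f)$, and—more to the point—$d\mu_f$ is characterized as a weak-$*$ limit of the normalized counting measures on the iterated preimages $f^{-n}(z)$ for all but at most two exceptional points $z$, and equivalently as the pullback-invariant measure $\tfrac{1}{d_f} f^* d\mu_f = d\mu_f$. From this one extracts that $J(f)$ is the closure of the repelling periodic points, hence $J(f) \subseteq \overline{\Prep(f)}$, and conversely every preperiodic point lies in the Fatou set or the Julia set in a way that... more cleanly, I would instead invoke the standard dichotomy: if $\Prep(f) = \Prep(g)$ as subsets of $\bP^1(\overline{\bQ})$ (or of $\bP^1(\bC)$), then in particular the sets of repelling periodic cycles coincide up to the finitely many that are non-repelling, so their closures $J(f)$ and $J(g)$ coincide. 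Once $J(f) = J(g)$, a theorem characterizing $d\mu_f$ among measures supported on a common compact set (for instance via the potential-theoretic description of $d\mu_f$ as $\Delta G_f$ for the Green's function, together with the fact that two rational maps with the same Julia set and the same measure-theoretic structure have proportional Green's functions) forces $d\mu_f = d\mu_g$. The cleanest citation here is to the fact that $d\mu_f$ depends only on the \emph{grand orbit equivalence relation}, or alternatively one uses that $d\mu_f = d\mu_g$ is equivalent to $h_f = h_g$ on $\bP^1(\overline{\bQ})$ by the arithmetic equidistribution theorem (Baker--Rumely, Chambert-Loir, Favre--Rivera-Letelier), and $h_f = h_g$ is exactly what fails when $\Prep(f)\neq\Prep(g)$.

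Carrying this out concretely, the steps in order are: (1) observe $f,g$ are polarized on $\bP^1$, so Theorem~\ref{common} gives the conclusion provided $\Prep(f)\neq\Prep(g)$; (2) prove the contrapositive of the reduction: assume $\Prep(f)=\Prep(g)$ and show $d\mu_f = d\mu_g$; (3) for step (2), descend $f,g$ to a number field or function field $K$, note that by Theorem~\ref{Northcott} (or Theorem~\ref{Moriwaki}) the hypothesis $\Prep(f)=\Prep(g)$ is equivalent to $\hat h_f$ and $\hat h_g$ having the same zero set on $\bP^1(\overline K)$; (4) upgrade ``same zero set'' to $\hat h_f = \hat h_g$ using that two canonical heights on $\bP^1$ with the same (Zariski-dense, since infinite) zero locus must be equal—this is where I would cite the arithmetic equidistribution / unlikely-intersections input, e.g. \cite{BakerFinite} or Yuan--Zhang, or for the number field case the original Baker--Rumely argument; (5) conclude $d\mu_f = d\mu_g$ from $\hat h_f = \hat h_g$ via the Chambert-Loir--Favre--Rivera-Letelier equidistribution theorem, which identifies the archimedean local measure attached to $\hat h_f$ with $d\mu_f$.

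The main obstacle is step (4): passing from equality of the zero sets $\Prep(f) = \Prep(g)$ to equality of the canonical height functions $\hat h_f = \hat h_g$. This is genuinely a theorem and not a formality—it is essentially the content of the ``preperiodic points determine the dynamics'' circle of results (Baker--DeMarco, Yuan--Zhang) and in the function-field characteristic-$0$ case requires the Moriwaki-height version of equidistribution. If one wants to avoid this machinery entirely, the fallback is to argue purely complex-analytically: $\Prep(f)=\Prep(g)$ forces $J(f)=J(g)$ (closures of repelling cycles), and then a classical rigidity result—two rational maps with the same Julia set have $d\mu_f = d\mu_g$ precisely when their Green's functions are equal, which holds automatically when the Julia set is not exceptional—but this fallback has its own subtleties around Lattès and power/Chebyshev maps where the Julia set is very symmetric, so I would present the height-theoretic route as primary and remark on the complex-dynamical one.
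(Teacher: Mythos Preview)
Your proposal is correct and follows essentially the same route as the paper: reduce to Theorem~\ref{common} by showing that $d\mu_f \neq d\mu_g$ forces $\Prep(f) \neq \Prep(g)$. The paper accomplishes this in one line by citing \cite[Theorem~1.5]{YZ13} (which states that $\Prep(f)\cap\Prep(g)$ infinite implies $\mu_f=\mu_g$), whereas your steps (3)--(5) effectively unwind the proof of that theorem through the equality of canonical heights and archimedean equidistribution; the underlying mathematics is identical, just cited at a different level of granularity.
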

\begin{proof}
 Theorem 1.5 of \cite{YZ13} (see also \cite{Carney}) states that if $\Prep(f) \cap \Prep(g)$ is
 infinite for
 $f, g \in \bC(x)$, then
 $\mu_f= \mu_g$.  
\end{proof}

\begin{remark}
  In fact, Theorem 1.5 of \cite{YZ13} implies that if
  $\Prep(f) = \Prep(g)$, then the canonical measures associated to $f$
  and $g$ are equal to each other in much more generality.  However,
  equality of these measures is a much weaker condition than equality
  of the set of preperiodic points.  For example, equality of measures
  of maximal entropy at a nonarchimedean place is much weaker than
  equality of the set of preperiodic points, since polarized morphisms
  having good reduction at a non-archimedean place $v$ will have the
  same canonical measure at $v$.  Even over $\bC$, one can have
  $\mu_f = \mu_g$ but $\Prep(f) \not= \Prep(g)$ for special rational
  functions $f$ and $g$ (let $f(X) = X^2$ and $g(X) = \omega X^2$
  where $|\omega| = 1$ but $\omega$ is not a root of unity, for example).  On the
  other hand, equality of measures of maximal entropy for non-special rational
  functions over $\bC$ has powerful consequences, due to work of Levin
  \cite{Levin1}; the results of Ye \cite{Ye} that we use in the next
  section rely on Levin's results.
\end{remark}

\subsection{Some counterexamples}\label{counter}
 
Propositions \ref{g1} and \ref{g2} provide a converse to Theorem \ref{common} for
rational functions in characteristic 0.  One might ask more generally,
if it is true that when $f,g: V \lra V$ are morphisms polarized by the
same line bundle for a variety $V$ in any dimension over any field, the
equality $\Prep(f) = \Prep(g)$ must imply that $\langle f, g \rangle$
cannot contain a nonabelian free semigroup.  It turns out
this is not true for polarized morphisms of varieties of dimension
greater than one in characteristic 0, as Example \ref{AV} shows.  In
characteristic $p$, it is not even true for polynomials, as Example
\ref{p-counter} shows.

\begin{example}\label{AV}
  Let $A$ be an abelian variety defined over a number field such that
  $\mathrm{End}^0(A) \otimes_\mathbb{Q} \mathbb{R}$ is the Hamiltonian
  quaternion algebra $\mathbb{H}$.  (That such abelian varieties exist
  is well-known, see Theorem B.33 of \cite{Lew99}, for example.) By
  Theorem 2 of \cite{GMS99}, $1+2i$ and $1+2j$ generate a free
  multiplicative subgroup of $\mathbb{H}$ of order $2$. We also have
  $\phi^\dagger \phi =[1-2i][1+2i]= [5]$ and
  $\psi^\dagger \psi = [1-2j][1+2j] = [5]$. Therefore $\phi$ and $\psi$
  are both polarized by the theta divisor $\Theta$ on $A$ by
  Proposition 3.1 of \cite{Paz13}.  Since $\phi$ and $\psi$ both
  commute with $[m]$ for any $m$, we must have
  $\Prep(\phi) = \Prep(\psi) = A_{\text {tors}}$.  
\end{example}

\begin{example}\label{p-counter}
  Let $K = \F_p$ and let $d, e > 1$ be integers such that
  $p \nmid de$.  Then if $f(x) = x^d$ and $g \in \F_p[t]$ is any
  polynomial of degree $e$ that is not a monomial, the semigroup
  $\langle f, g \rangle$ is a free semigroup on two generators by \cite[Lemma 3.1]{JZ}.  Note
  that $\Prep(f) = \Prep(g) = {\overline \F_p}$ since $f$ and $g$ are
  both defined over $\F_p$.
\end{example}

\section{Proofs of Theorems \ref{rational} and \ref{linear}}\label{linear-sec}

We will now prove Theorems \ref{rational} and \ref{linear}.  We will
do so by proving results on the growth of finitely generated semigroups
$\cS$ such that $\Prep(f) = \Prep(g)$ for all $f, g \in \cS$ with
degrees greater than 1; these are Propositions \ref{g1} and \ref{g2}.  Combining with Theorem \ref{common} then
gives Theorems \ref{rational} and \ref{linear}.   We begin with a
lemma about semigroups of maps that contain constant maps.

\begin{lem}\label{constant}
  Suppose $S$ is a finite set of maps from a set $\cU$ to itself and
  that $f$ is a map that sends all of $\cU$ to a single element of
  $\cU$. Let $S_1 = S \cup \{ f \}$.  If $S^{\leq n}$ is the
  set of distinct maps represented by words of length at most $n$
  in $S$ and $S_1^{\leq n}$ is the set of distinct maps represented
  by words of length at most $n$ in $S_1$, then $|S_1^{\leq n}| \leq
    2 |S^{\leq n}|$.
\end{lem}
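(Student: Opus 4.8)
The plan is to understand what words in $S_1 = S \cup \{f\}$ can represent, given that $f$ is constant (it collapses all of $\cU$ to a single point $p \in \cU$). The key observation is that once the letter $f$ appears in a word $w = \psi_k \cdots \psi_1$ (reading right to left as composition), everything to the right of the leftmost occurrence of $f$ is irrelevant: if $\psi_i = f$ is the last (leftmost) letter equal to $f$, then $w = \psi_k \cdots \psi_{i+1} \circ f \circ (\psi_{i-1} \cdots \psi_1)$, and since $f$ sends all of $\cU$ to $p$, the composite equals the map sending every point to $\psi_k \cdots \psi_{i+1}(p)$. In other words, $w$ equals the constant map with value $u \cdot p$ where $u = \psi_k \cdots \psi_{i+1}$ is a word in $S$ alone (possibly empty, giving the constant map with value $p$ itself), of length at most $n - 1$.

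First I would partition $S_1^{\leq n}$ into two pieces: the maps represented by words of length at most $n$ containing \emph{no} occurrence of $f$, and those represented by some word of length at most $n$ containing \emph{at least one} occurrence of $f$. The first piece is exactly $S^{\leq n}$, so it has at most $|S^{\leq n}|$ elements. For the second piece, the observation above shows every such map is a constant map whose value lies in $\{p\} \cup \{ u(p) : u \in S^{\leq n-1}\}$, i.e., in the set $\{ u(p) : u \in S^{\leq n-1} \cup \{\mathrm{id}\}\}$. I would then note this is at most $|S^{\leq n-1}| + 1 \le |S^{\le n}| + 1$ values; but actually a cleaner bound is to observe the second piece injects into $S^{\leq n}$ via "evaluate at $p$" together with the empty word, which I can absorb — more carefully, the map $u \mapsto (\text{constant map with value } u(p))$ from $S^{\leq n-1}\cup\{\mathrm{id}\}$ onto the second piece is surjective, and $|S^{\le n-1} \cup \{\mathrm{id}\}| \le |S^{\le n}|$ provided $S^{\le n}$ is nonempty (which it is, for $n \ge 1$; the case $n=0$ is trivial since both sides are small). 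Hence the second piece has size at most $|S^{\leq n}|$, and adding the two bounds gives $|S_1^{\leq n}| \leq 2|S^{\leq n}|$.

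The only genuine subtlety — the step I expect to need the most care — is bookkeeping around the empty word / identity map and the possibility of overlap between the two pieces (a constant map might coincidentally already be represented by an $f$-free word in $S$). Overlap only helps the inequality, so it can be ignored, and the empty-word issue is handled by the remark that $\{u(p): u \in S^{\le n-1}\cup\{\mathrm{id}\}\}$ has at most $|S^{\le n}|$ elements for $n\ge 1$ (one may even just weaken the claim to $|S_1^{\le n}| \le 2|S^{\le n}|+1$ and note the $+1$ is absorbed, or treat $n=0$ separately where $S_1^{\le 0}=\{\mathrm{id}\}$ has one element). So the real content is entirely the collapsing observation about the leftmost occurrence of $f$; the rest is counting.
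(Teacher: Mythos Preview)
Your proposal is correct and follows essentially the same argument as the paper: split $S_1^{\le n}$ into $f$-free words (bounded by $|S^{\le n}|$) and words containing $f$, then use the leftmost occurrence of $f$ to see that any such word is the constant map with value $w_1(p)$ for some $f$-free prefix $w_1$ of length at most $n-1$, giving at most $|S^{\le n}|$ further maps. The paper handles the empty-word bookkeeping simply by taking $S^{\le n}$ to include the identity (length-zero word), which absorbs your ``$+1$'' concern without further comment.
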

\begin{proof}
  It will suffice to show that the number of words in $S_1^{\leq n}$
  containing $f$ is bounded by $|S^{\leq n}|$.  Let
  $w \in S_1^{\leq n}$ contain $f$.  We write $w = w_1 f w_2$ where
  $w_1$ does not contain $f$ (note that $w_1$ may be empty).  Let
  $v$ be the element of $\cU$ such that $f(u) = v$ for all
  $u \in \cU$.  Then $w(u) = w_1(v)$ for all $u \in \cU$.  Since
  $w_1 \in S^{\leq n}$, there are at most $|S^{\leq n}|$ such 
  $w_1(v)$, and our proof is done.
\end{proof} 

\begin{remark}
  While Lemma \ref{constant} allows us to treat semigroups of rational
  functions containing constant maps, we cannot expect to obtain
  results in higher dimensions for semigroups containing morphisms
  that are neither constant nor finite, because of the examples in
  \cite{Ok2}.
 \end{remark}

Throughout this section, $\cS$ will denote a finitely generated semigroup
of rational functions in $\bC(x)$ and $\cS^+$ will denote the set of
elements in $\cS$ of degree greater than 1.   

We begin with a simple lemma.

\begin{lem}\label{linfin}
Let $f \in \bC(x)$ be a non-special rational function of degree greater than
1.  Then the set of $\sigma \in \bC(x)$ of degree 1 such that $\mu_f
= \mu_{\sigma f}$ is finite.
\end{lem}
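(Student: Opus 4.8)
\emph{Proof sketch.} The plan is to reduce the lemma to the finiteness of the group of M\"obius symmetries of the measure of maximal entropy $\mu_f$, and then to invoke a rigidity property of non-special maps.

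First I would record that every degree-one $\sigma$ with $\mu_{\sigma f}=\mu_f$ must preserve $\mu_f$. Set $d=\deg f=\deg(\sigma\circ f)$ and $\mu=\mu_f=\mu_{\sigma f}$. The measure of maximal entropy of a rational map $h$ of degree $d\ge 2$ satisfies the balanced identity $h^{*}\mu_h=d\,\mu_h$, where $h^{*}$ denotes the branched pull-back of measures. Applying this to both $f$ and $\sigma\circ f$ and using $(\sigma\circ f)^{*}=f^{*}\circ\sigma^{*}$ gives $f^{*}(\sigma^{*}\mu)=(\sigma\circ f)^{*}\mu=d\,\mu=f^{*}\mu$. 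Since $f_{*}f^{*}=d\cdot\mathrm{id}$ on measures, the map $f^{*}$ is injective, so $\sigma^{*}\mu=\mu$; as $\sigma$ is a bijection this says $\sigma_{*}\mu_f=\mu_f$. Hence
\[
\{\sigma\in\bC(x):\deg\sigma=1,\ \mu_{\sigma f}=\mu_f\}\ \subseteq\ \Sigma(f):=\{\sigma\in\mathrm{PGL}_2(\bC):\sigma_{*}\mu_f=\mu_f\},
\]
and it suffices to prove that $\Sigma(f)$ is finite.

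Next I would analyze $\Sigma(f)$, which is a subgroup of $\mathrm{PGL}_2(\bC)$ that is closed in the analytic topology (push-forward is continuous for the weak-$*$ topology) and preserves $J(f)=\mathrm{supp}(\mu_f)$. Since $\deg f\ge 2$, the measure $\mu_f$ has no atoms. If $\Sigma(f)$ were infinite, then either it is infinite and discrete --- hence an elementary Kleinian group (a non-elementary one admits no invariant probability measure on $\bP^1$) containing a parabolic or loxodromic element, which would force $\mu_f$ to be supported on at most two points, contradicting atomlessness --- or it is positive-dimensional. Discarding the one-parameter subgroups that contain a parabolic or attracting fixed point (which again force atoms), the only surviving possibility is that $\mu_f$ is invariant under a nontrivial group of rotations: after a change of coordinates, under every $z\mapsto e^{i\theta}z$, or under the full rotation group of the sphere.

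The main obstacle is this last step: ruling out such rotational symmetry. Here I would invoke the fact --- essentially due to Levin, and part of the machinery underlying the theorem of Ye used in the next section --- that the measure of maximal entropy of a rational function of degree $\ge 2$ has an infinite group of M\"obius symmetries only when the function is special. Concretely, invariance of $\mu_f$ under a circle of rotations forces $f$ to be conjugate to $z^{\pm d}$, and invariance under the full rotation group forces $f$ to be a Latt\`es map; since $f$ is non-special, neither can occur, so $\Sigma(f)$ is finite and the lemma follows. Should a citation in exactly this form be unavailable, I would instead derive the required rigidity directly from the functional equation $f^{*}\mu_f=d\,\mu_f$, analyzed near the fixed points of the hypothetical symmetry group, which pins $f$ down up to conjugacy.
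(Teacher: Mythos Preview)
Your approach is essentially the same as the paper's: both arguments show that any degree-one $\sigma$ with $\mu_{\sigma f}=\mu_f$ must preserve $\mu_f$ (equivalently, is a symmetry of $\cJ_f$), and then invoke Levin's theorem that the symmetry group of $\mu_f$ is finite when $f$ is non-special. The paper simply asserts the first step and cites \cite[Theorem~1]{Levin1}; you supply a clean justification of the first step via the injectivity of $f^{*}$ on measures (the paper could equally have used the push-forward identity $(\sigma f)_{*}\mu=\sigma_{*}f_{*}\mu=\sigma_{*}\mu$), and then sketch some of the internal structure of Levin's argument before ultimately citing it. The sketch is broadly correct, though the endpoint claims are not quite precise as stated---for example, $\mathrm{SO}(3)$-invariance of $\mu_f$ would force $\mu_f$ to be normalized spherical area, which is not the maximal-entropy measure of a generic Latt\`es map, and what actually remains to be excluded is the Chebyshev case as well---so the direct citation of Levin, which both you and the paper make, is the right way to close the argument.
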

\begin{proof}
  Let $\cJ$ denote the Julia set of $f$ and $\sigma f$ and let $\mu$
  denote their measure of maximal entropy.  Then $\sigma(\cJ) = \cJ$
  and $\mu(A) = \mu(\sigma(A)$) for any set $A$ in $\bP^1(\bC)$.  Thus,
  $\sigma$ is a symmetry on $\cJ$ in the sense of \cite[Definition
  1]{Levin1}.  Since $f$ is non-special the set of such symmetries is
  finite by \cite[Theorem 1]{Levin1}.
 \end{proof} 

We will use the following result due to Ye \cite{Ye}.  Since his argument
appears with slightly different notation as an implication in the
proof of \cite[Theorem 1.5]{Ye} (on page 393), rather than as a lemma
or theorem, we provide a proof using the same argument here.

\begin{lem}\label{HexiLem}
  Let $f, g \in \bC(x)$ be rational functions such that
  $\mu_f = \mu_g$ and $\deg f = \deg g > 1$.  Suppose that there is an
  $\alpha \not= \infty$ in 
  $\cJ_f = \cJ_g$ such that
  $f(\alpha) = g(\alpha) = \alpha$ and $f'(\alpha) = g'(\alpha)$.
  Then $f = g$.
\end{lem}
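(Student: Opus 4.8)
The plan is to reduce everything to a statement about a linearizing coordinate at $\alpha$ and then exploit the self-similarity of the measure of maximal entropy near a repelling fixed point. Conjugating by a Möbius transformation (which alters neither the hypotheses nor the conclusion) we may assume $\alpha=0$. Since $\alpha\in\cJ_f$ is fixed it is non-attracting, so $\lambda:=f'(\alpha)=g'(\alpha)$ has $|\lambda|\geq 1$; the main case is $|\lambda|>1$, which I treat first. By Koenigs' theorem there are unique germs of biholomorphisms $\phi_f,\phi_g$ at $\alpha$ with $\phi_f(\alpha)=\phi_g(\alpha)=0$, $\phi_f'(\alpha)=\phi_g'(\alpha)=1$, $\phi_f\circ f=\lambda\phi_f$ and $\phi_g\circ g=\lambda\phi_g$. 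In the coordinate $w=\phi_f(t)$ the map $f$ becomes $w\mapsto\lambda w$ while $g$ becomes $\tilde g:=\psi^{-1}\circ(\lambda w)\circ\psi$, where $\psi:=\phi_g\circ\phi_f^{-1}$ fixes $0$ with $\psi'(0)=1$. If $\psi$ commutes with $w\mapsto\lambda w$ then, comparing Taylor coefficients and using $|\lambda|>1$, we get $\psi=\mathrm{id}$, hence $\phi_f=\phi_g$, hence $f=g$ near $\alpha$ and therefore everywhere. So the whole problem reduces to proving $\tilde g(w)=\lambda w$.

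Now I bring in the measure. Let $\nu$ be the push-forward of $\mu_f=\mu_g$ (restricted to a small neighborhood of $\alpha$) under $\phi_f$; since $\alpha\in\cJ_f$ and $\cJ_f$ is perfect, $0\in\mathrm{supp}\,\nu$. From $f^*\mu_f=(\deg f)\mu_f$, $g^*\mu_g=(\deg g)\mu_g$ and $\deg f=\deg g=d$, transporting by $\phi_f$ gives, for all sufficiently small Borel sets $B$,
\[ \nu(\lambda B)=\nu\big(\tilde g(B)\big)=d\,\nu(B). \]
Hence the tangent-to-identity germ $T:=\tilde g\circ(w/\lambda)$ satisfies $\nu(T(B))=\nu(B)$, and proving $\tilde g(w)=\lambda w$ is the same as proving $T=\mathrm{id}$. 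Suppose $T\neq\mathrm{id}$ and write $T(w)=w+c\,w^{k+1}+O(w^{k+2})$ with $k\geq 1$, $c\neq 0$. Conjugating $T$ by the dilations $w\mapsto\lambda^{n}w$ yields germs $T_n(w):=\lambda^{n}T(\lambda^{-n}w)=w+c\lambda^{-nk}w^{k+1}+\cdots$ which, using $\nu(\lambda^{n}B)=d^{n}\nu(B)$ repeatedly, again preserve $\nu$, and which converge to $\mathrm{id}$ with $(T_n-\mathrm{id})/(c\lambda^{-nk})\to w^{k+1}$ uniformly on a fixed small disc. Thus, away from $w=0$, suitable iterates of the $T_n$ approximate the time-$\tau$ maps of the local flow of the holomorphic vector field $V=w^{k+1}\partial_w$ for each small complex time $\tau$; since each $T_n^{\,m}$ preserves $\nu$, the local flow of $V$ preserves $\nu$. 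Straightening $V$ (locally $V=\partial_\xi$) shows $\nu$ is invariant under all small translations in the $\xi$-coordinate, hence a constant multiple of Lebesgue measure there, hence $\nu$ — and so $\mu_f$ — is absolutely continuous with respect to area near $\alpha$, and then on all of $\cJ_f$ (as $f$ is topologically exact there and conformal almost everywhere).

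By Zdunik's theorem, a rational map of degree $\geq 2$ whose measure of maximal entropy is absolutely continuous with respect to area is a Lattès map; so unless $f$ is Lattès we have a contradiction and $f=g$. If $f$ is Lattès, then so is $g$ (same measure of maximal entropy), and $\mu_f=\mu_g$ forces $f$ and $g$ to arise from the same quotient $\pi\colon E\to\bP^1$ of an elliptic curve, say $f=\pi\circ(z\mapsto\lambda_f z)\circ\pi^{-1}$ and $g=\pi\circ(z\mapsto\lambda_g z)\circ\pi^{-1}$ with $|\lambda_f|^2=|\lambda_g|^2=d$; lifting the common fixed point to $E$ and comparing $f'(\alpha)$ with $g'(\alpha)$ gives $\lambda_f=\pm\lambda_g$ (the sign according to whether the lift is a ramification point of $\pi$), and $f=g$ in both cases, using that $z\mapsto -z$ descends to the identity on $\bP^1$. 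The step I expect to be the real obstacle is the rescaling argument of the second paragraph: passing rigorously from "each $T_n$ preserves $\nu$ and $T_n\to\mathrm{id}$" to "the local flow of $w^{k+1}\partial_w$ preserves $\nu$" needs uniform control because the $T_n$ are defined on domains that first shrink and then grow, and one must also pin down the precise form of Zdunik's theorem invoked. A separate issue is that the first paragraph assumed $|\lambda|>1$: when $|\lambda|=1$ the point $\alpha$ is parabolic or a Cremer point (it cannot be a Siegel point, which would lie in the Fatou set), and Koenigs coordinates together with the dilation rescaling must then be replaced by Fatou coordinates on a repelling petal (after passing to an iterate to make $\lambda=1$) and the corresponding parabolic renormalization.
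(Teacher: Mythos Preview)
Your germ $T=\tilde g\circ(w/\lambda)$ is, once you undo the conjugation by $\phi_f$, nothing but $g\circ f^{-1}$; the paper works with $R:=f\circ g^{-1}$ directly and never linearizes. Since $R'(\alpha)=f'(\alpha)/g'(\alpha)=1$ irrespective of $|\lambda|$, the Leau--Fatou flower theorem applies to $R$ unconditionally: if $R\ne\mathrm{id}$ there are attracting and repelling petals at $\alpha$, some petal meets $\cJ_f=\operatorname{supp}\mu$ (the petals cover a punctured neighborhood and $\cJ_f$ is perfect), a fundamental domain for $R$ in that petal then has positive $\mu$-measure, and since $R$ preserves $\mu$ locally (from $f^*\mu=g^*\mu=d\mu$ with $\deg f=\deg g$) and acts as a translation in Fatou coordinates, the petal would have infinite $\mu$-measure --- a contradiction. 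That is the entire proof. Your rescaling to the flow of $w^{k+1}\partial_w$, the appeal to Zdunik's theorem, and the separate Latt\`es analysis are an elaborate detour that in effect rebuilds the petal picture from scratch; the difficulty you flag in making the rescaling rigorous disappears once you apply Leau--Fatou to the multiplier-$1$ germ $R$ directly.

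There is also a genuine gap in your outline beyond the acknowledged technicalities. In the Cremer case ($|\lambda|=1$ with irrational rotation number) no iterate of $f$ has multiplier $1$ at $\alpha$, so your proposed fix ``pass to an iterate to make $\lambda=1$ and use Fatou coordinates'' simply does not apply, and there is no linearizing coordinate to substitute for Koenigs. The paper's route sidesteps this completely, because $R=f\circ g^{-1}$ has multiplier exactly $1$ whether $\alpha$ is repelling, parabolic, or Cremer for $f$ and $g$. (Your Latt\`es endgame is also thinner than it looks --- Latt\`es maps descend from affine maps $z\mapsto az+b$ on $E$, not just from $z\mapsto\lambda z$ --- but this is moot once the detour is abandoned.)
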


\begin{proof}
  We will show that $R:= f \circ g^{-1}$ is the identity map. Otherwise,
  since $R$ has multiplier equaling $1$ at $\alpha$, it
  determines attracting and repelling flowers near $\alpha$. Suppose that
  there is some point $x$ near $\alpha$ in the Julia set that is also in some
  attracting petal of the flowers determined by $R$ (see \cite[Section
  10]{Milnor-Complex}). Then there is some
  fundamental domain of $R$ for this petal, which contains some
  neighborhood of this point $x$. As the measure of maximal entropy
  $\mu$ is supported in the Julia set, the fundamental domain won’t
  have zero measure. Since $R$ acts on this petal like a
  transformation (in appropriate coordinates) and $R$ preserves the
  measure $\mu$ (since $\deg f = \deg g$), the $\mu$-measure of this
  petal cannot be finite.
\end{proof}

\begin{lem}\label{derivative}
Let $f, g \in \bC(x)$ be non-special rational functions of degree
greater than 1  such that $\Prep(f) = \Prep(g)$ and $\deg f = \deg g$.  Let
$\cJ$ denote $\cJ_f = \cJ_g$.
Suppose there is a periodic cycle $\{ x_1, \dots, x_r \}$
for $f$ and $g$ in
$\cJ \cap \bC$ such that $f(x_i) = g(x_i) = x_{i+1}$
for $i = 1, \dots, r-1$ and $f(x_r) = g(x_r) = x_1$.
Then we have the following:
\begin{enumerate}
\item $f'(x_1)/g'(x_1)$ is a root of unity.
 \item If $f'(x_1) = g'(x_1)$, then $f = g$.  
\end{enumerate}\end{lem}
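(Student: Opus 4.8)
The plan is to study, near a point of the given cycle, the local dynamics of the correspondence $f\circ g^{-1}$, following the argument used in the proof of Lemma~\ref{HexiLem}. First the reductions. Since $\Prep(f)=\Prep(g)$ is infinite and $f,g$ are non-special of degree $d>1$, Theorem~1.5 of \cite{YZ13} gives $\mu_f=\mu_g$; write $\mu$ for this common measure, so $\operatorname{supp}\mu=\cJ$, and recall the balance relation $f^{*}\mu=g^{*}\mu=d\mu$. A periodic point lying in the Julia set is never a critical point (a periodic critical point would give a superattracting, hence Fatou, cycle), so each $x_i$ is a non-critical point of both $f$ and $g$. Let $\gamma$ be the local inverse of $g$ near $x_2$ with $\gamma(x_2)=x_1$, and put $R=f\circ\gamma$; since $f(x_1)=g(x_1)=x_2$ this is a biholomorphism of a neighbourhood of $x_2$ onto a neighbourhood of $x_2$ fixing $x_2$, with multiplier
\[
R'(x_2)=f'(x_1)\,\gamma'(x_2)=\frac{f'(x_1)}{g'(x_1)}=:\lambda .
\]
By complete invariance of $\cJ$, both $R$ and $R^{-1}=g\circ f^{-1}$ (the branch sending $x_2$ to $x_1$) map $\cJ$ into $\cJ$ near $x_2$. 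Moreover $R$ preserves $\mu$ near $x_2$: for a small set $A$ near $x_2$, unramifiedness of $g$ at $x_1$ together with $g^{*}\mu=d\mu$ gives $\mu(\gamma(A))=\mu(A)/d$, while unramifiedness of $f$ at $x_1$ together with $f^{*}\mu=d\mu$ gives $\mu(f(B))=d\mu(B)$ for $B$ near $x_1$; taking $B=\gamma(A)$ yields $\mu(R(A))=\mu(A)$. This cancellation of the two ramification factors is exactly what the phrase ``$R$ preserves $\mu$ (since $\deg f=\deg g$)'' in Lemma~\ref{HexiLem} encodes.

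For part~(2) we have $\lambda=1$, so $R$ is either the identity germ or parabolic (tangent to the identity) at $x_2$. This is precisely the configuration handled in the proof of Lemma~\ref{HexiLem}, with $x_2$ playing the role of $\alpha$ and $\gamma$ the role of the chosen branch of $g^{-1}$: that proof uses only that $f\circ g^{-1}$ fixes a point of $\cJ$ with multiplier $1$, that $\operatorname{supp}\mu=\cJ$, and that the relevant branch preserves $\mu$ --- all of which we have verified. Hence the flower/fundamental-domain argument forces $R$ to be the identity near $x_2$, i.e.\ $f=g$ on a neighbourhood of $x_1$, and therefore $f=g$ on $\bP^1$ by the identity theorem.

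For part~(1) I must show $\lambda$ is a root of unity. Suppose first $|\lambda|\neq1$; replacing $R$ by the germ $R^{-1}$ (which also preserves $\cJ$ and $\mu$ near $x_2$) if necessary, we may assume $|\lambda|<1$, so $x_2$ is attracting for $R$. Choose a round disc $U\ni x_2$ with $\overline{R(U)}\subset U$ and write $U\setminus\{x_2\}=\bigsqcup_{n\ge0}A_n$ with $A_n=R^{n}(U)\setminus R^{n+1}(U)=R^{n}(A_0)$. If $A_0$ missed $\cJ$ then, applying the $\cJ$-preserving germ $R^{-1}$ repeatedly, every $A_n$ would miss $\cJ$, forcing $\cJ\cap U\subseteq\{x_2\}$ and contradicting perfectness of $\cJ$; so $A_0$ meets $\cJ$, hence so does each $A_n=R^{n}(A_0)$, whence $\mu(A_n)>0$. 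Since $R$ preserves $\mu$ we get $\mu(A_n)=\mu(A_0)$ for all $n$, so $\mu(U)\ge\sum_{n\ge0}\mu(A_0)=\infty$, which is absurd. Thus $|\lambda|=1$. If $\lambda=e^{2\pi i\theta}$ with $\theta$ irrational, then $R$ is an irrationally indifferent germ at $x_2\in\operatorname{supp}\mu$ preserving the finite measure $\mu$: in the linearizable (Siegel) case $\cJ$ contains the $R$-invariant real-analytic circle through a nearby point of $\cJ$, which is impossible for a non-special map; the same conclusion also follows whenever the cycle is repelling for $f$ or for $g$, by linearizing the corresponding iterate at $x_2$ and letting its powers conjugate $R$ to $w\mapsto\lambda w$, which then preserves the linearized Julia set and thus forces it to contain a circle. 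The remaining case --- $R$ non-linearizable (Cremer-type) with the cycle indifferent for both $f$ and $g$ --- should be disposed of using P\'erez-Marco's hedgehog theory for the germ $R$. Hence $\lambda$ is a root of unity.

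The step I expect to be the main obstacle is exactly this last one: ruling out a non-linearizable irrationally indifferent germ $R=f\circ\gamma$ in the case where the common cycle is indifferent for both maps. Everything else is a direct adaptation of the measure-theoretic and petal arguments already used for Lemma~\ref{HexiLem}, the crucial point being that the single local branch $R=f\circ\gamma$ of $f\circ g^{-1}$ preserves $\mu$ precisely because the ramification contributions of $f$ and of $g$ to the balance relations $f^{*}\mu=g^{*}\mu=d\mu$ cancel.
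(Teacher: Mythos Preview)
Your treatment of part~(2) is correct and is essentially the local form of the petal argument from the proof of Lemma~\ref{HexiLem}. The paper reaches the same conclusion via a small extra device that turns out to be the key to part~(1) as well: it sets $h_1=f\circ g^{\,r-1}$ and $h_2=g^{\,r}$. These are \emph{global} rational maps of equal degree, both fixing $x_2$, with $h_1'(x_2)=\alpha_1\prod_{i\ge 2}\beta_i$ and $h_2'(x_2)=\beta_1\prod_{i\ge 2}\beta_i$ (writing $\alpha_i=f'(x_i)$, $\beta_i=g'(x_i)$); for part~(2) one applies Lemma~\ref{HexiLem} directly to $h_1,h_2$ at $x_2$ and then right-cancels $g^{\,r-1}$.

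For part~(1), however, your approach has a genuine gap, which you yourself flag. The Cremer-type case is left unproved, and the Siegel-case sketch is not solid either: the assertion that a non-special map cannot have a real-analytic arc in its Julia set is a nontrivial statement that would need its own proof and reference, and the sentence about ``linearizing the corresponding iterate at $x_2$ and letting its powers conjugate $R$ to $w\mapsto\lambda w$'' does not work as written, since $R=f\circ\gamma$ has no reason to commute with $f^{r}$ or $g^{r}$, so a linearizing coordinate for one of those iterates need not linearize $R$. The paper bypasses this entire local analysis. Since $h_1,h_2$ are non-special, have the same preperiodic points, and share the fixed point $x_2\in\cJ$, \cite[Theorem~1.5]{Ye} gives an $n$ with $h_1^{\,n}=h_2^{\,n}$; comparing derivatives at $x_2$ then yields $(\alpha_1/\beta_1)^{n}=1$ immediately. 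Passing to the global maps $h_1,h_2$ is precisely what lets one invoke Ye's structural theorem rather than attempt the delicate irrationally-indifferent germ analysis by hand.
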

\begin{proof}
  Let $\alpha_i=f'(x_i)$ and let $\beta_i=g'(x_i)$ for $i=1, \dots,
  r$.  Note that none of the $\alpha_i$ and $\beta_i$ are zero since
  the $x_i$ are in the Julia set for $f$ and $g$.  
Now let $h_1=fg^{r-1}$ and let $h_2=g^r$.  Then $h_1(x_2) = h_2(x_2) =
x_2$.  We have
\begin{equation}\label{prod1}
  h_1'(x_2) = \alpha_1 \prod_{i=2}^n \beta_i
  \end{equation}
  and
  \begin{equation}\label{prod2}
h_2'(x_2) = \beta_1 \prod_{i=2}^n \beta_i
\end{equation}
by the chain rule.  

Since $\Prep(h_1) =
\Prep(h_2) = \Prep(f) = \Prep(g)$ and $\cJ$ contains a point that is
periodic for both $h_1$ and $h_2$, it follows from \cite[Theorem
1.5]{Ye} that there is an $n$ such that $h_1^n = h_2^n$.  By
\eqref{prod1} and \eqref{prod2}, this means that
$(\alpha_1/\beta_1)^n = 1$, so $f'(x_1)/g'(x_1)$ is a root of unity,
as desired.  Furthermore, if $f'(x_1) = g'(x_1)$, then \eqref{prod1}
and \eqref{prod2} imply that $h_1'(x_2) =
h_2'(x_2)$, which means that $h_1 = h_2$, by Lemma \ref{HexiLem}.
Now, if $f g^{r-1} = g^r$, then $f =g$ by right cancellation.
 \end{proof}

 \begin{lem}\label{deg-bound}
   Let $\cS$ be a finitely generated semigroup of rational functions
   in $\bC(x)$ such that the elements of $\cS^+$ are non-special.
   Suppose that $\cS$ is not empty and that $\Prep(f) = \Prep(g)$ for
   all $f,g \in \cS^+$.  Then there is a constant $N$ such that for
   all $d \geq 1$, the number of elements of $\cS$ of degree $d$ is
   less than or equal to $N$.
  \end{lem}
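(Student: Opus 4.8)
The plan is to bound the number of elements of $\cS$ of each degree $d$ by controlling, for each degree-$d$ element $f \in \cS^+$, finitely many rigid pieces of data that determine $f$. First I would fix a point $x_0 \in \cJ \cap \bC$ (where $\cJ$ is the common Julia set of all elements of $\cS^+$) lying on a periodic cycle for some fixed element $f_0 \in \cS^+$; such a point exists since repelling periodic points are dense in the Julia set and we may avoid $\infty$ by conjugating. The key structural observation is that for any $f \in \cS^+$, the orbit closure of $x_0$ under $\langle f, f_0 \rangle$ stays in $\cJ$, so by the density of periodic points and a pigeonhole/compactness argument one can arrange that $x_0$ (or some fixed finite set of points) is periodic for $f$ as well; this lets us apply Lemma \ref{derivative}. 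The point is that Lemma \ref{derivative}(2) says that once a periodic cycle in $\cJ \cap \bC$ is shared and the derivative at a cycle point agrees, the two maps coincide. So the number of degree-$d$ elements of $\cS^+$ is controlled by the number of possible values of a derivative at a fixed periodic point — and Lemma \ref{derivative}(1) says all such derivatives are roots-of-unity multiples of one another.

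Next I would make this quantitative. Having reduced to a fixed shared periodic cycle $\{x_1,\dots,x_r\}$ in $\cJ \cap \bC$, every degree-$d$ element $g$ of $\cS^+$ that fixes this cycle (appropriately) has $g'(x_1) = \zeta \, f^{*}{}'(x_1)$ for some root of unity $\zeta$, where $f^{*}$ is a fixed reference map. To get a genuine finite bound on the number of such $\zeta$, I would use that $\cS$ is finitely generated: each $g$ of degree $d$ is a word of bounded combinatorial type in the generators, and the multiplier at a periodic point of a composite is a product of multipliers of the generators along the cycle, which lie in a finitely generated multiplicative group $\Gamma \subset \bC^*$. The roots of unity in $\Gamma$ form a finite cyclic group, so there are at most $|\Gamma_{\tors}|$ possible ratios, hence at most $|\Gamma_{\tors}|$ elements of $\cS^+$ of any given degree $d$ realizing a fixed periodic-cycle configuration. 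Finally one sums over the (bounded number of) possible periodic-cycle configurations of bounded period to get a single constant $N$ independent of $d$; I would also fold in Lemma \ref{linfin} and Lemma \ref{constant} to handle the degree-$1$ elements and any constant maps that may occur.

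The main obstacle, I expect, is the step where one forces a \emph{common} periodic cycle in $\cJ \cap \bC$ for $g$ together with the reference map. Lemma \ref{derivative} requires a genuine periodic point shared by both maps lying in the affine part of the Julia set, and there is no reason an arbitrary $g \in \cS^+$ shares a periodic point with $f_0$ a priori. The trick will be to use $\Prep(g) = \Prep(f_0)$: a periodic point of $f_0$ in $\cJ$ is preperiodic for $g$, and by passing to iterates and exploiting that preperiodic points of bounded height are finite in number over the finitely generated field of definition (or, over $\bC$, that $g$ permutes the finite set of $g$-periodic points of each period), one can extract a point that is genuinely periodic for both, at the cost of replacing $f_0$ and $g$ by suitable iterates. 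Bookkeeping the effect of these iterate-replacements on the degree — so that a bound "at most $N$ of each degree" is preserved rather than being diluted across degrees — is the delicate part, and I would handle it by noting that passing from $g$ to $g^k$ for a uniformly bounded $k$ only spreads a fixed-degree family across boundedly many new degrees, which still yields a uniform $N$ after enlarging the constant.
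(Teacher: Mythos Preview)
Your identification of the main obstacle is exactly right, but your proposed fix --- passing to iterates of $g$ and $f_0$ to force a shared periodic point --- is where the argument breaks. Lemma~\ref{derivative} does not merely require that $f$ and $g$ each have a periodic point in $\cJ$; it requires a cycle on which $f$ and $g$ \emph{agree pointwise}: $f(x_i)=g(x_i)=x_{i+1}$. Your iterate trick produces a $g$-periodic point from an $f_0$-periodic point, but gives no reason that $f_0$ and $g$ act identically along the resulting cycle, and the ``uniformly bounded $k$'' claim is not justified (and even if it were, the bookkeeping you describe does not recover a bound on the number of degree-$d$ maps $g$ themselves, only on their $k$-th iterates).

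The paper sidesteps this entirely by constructing, once and for all, a finite set $\cO\subset\bC$ stable under every element of $\cS^+$: pick any $\gamma\in\cP\cap\bC$, let $K$ be a finitely generated field over which $\gamma$ and all generators of $\cS$ are defined, and take $\cO$ to be the $\cS^+$-orbit of $\gamma$; then $\cO\subseteq\Prep(f_0)\cap\bP^1(K)$, which is finite by Northcott. Now partition the degree-$d$ elements of $\cS^+$ by their restriction to $\cO$ (at most $|\cO|^{|\cO|}$ classes). Within a class, any two $f,g$ satisfy $f|_\cO=g|_\cO$, so any $f$-cycle in $\cO$ is automatically a $g$-cycle on which $f$ and $g$ agree pointwise --- exactly the hypothesis of Lemma~\ref{derivative}. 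The derivative ratio $f'(x_1)/g'(x_1)$ is then a root of unity lying in $K$, and $K$ has only finitely many roots of unity; this replaces your $\Gamma_{\tors}$ argument and requires no control on how the multiplier decomposes along a word. You were very close: the finiteness-over-$K$ input you mention is precisely what makes $\cO$ finite, but it should be used to build a single common invariant set rather than to bound tail lengths of individual orbits.
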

  \begin{proof}
    There are finitely many elements in $\cS$ of degree 1 by Lemma
    \ref{linfin}, since there is an $f \in \cS$ of degree greater than
    1 such that $\Prep(\sigma f) = \Prep(f)$ for all $\sigma \in \cS$
    of degree 1.

    Now, let $\cJ$ be the Julia of the elements in $\cS$ having degree
    greater than 1.  Let $\cP$ be the set of preperiodic points of the
    elements $\cS^+$.  Then $f(\cP) = \cP$ for all $f \in \cS^+$.
    Choose a $\gamma \in \bC \cap \cP$.  Let $K$ be a finitely
    generated field over which $\gamma$ and every element of $\cS$ is
    defined.  Since $\Prep(f) \cap K$ is finite for each $f$ of degree
    greater than 1, it follows that the orbit $\cO$ of $\gamma$ under
    $\cS^+$ is finite.  After change of coordinates, we may assume
    that $\cO$ does not contain the point at infinity.

    Let $n$ be the number of roots unity in $K$.  Let $f \in \cS^+$
    have degree $d$.  We will show that there are at most $n$ elements
    $g \in \cS^+$ such that $\deg g = d$ and $g|_\cO = f|_\cO$.  Let
    $\{ x_1, \dots, x_r \}$ be a periodic cycle for $f$ in $\cO$ (note
    that there must be one since $\cO$ is finite) such that
    $f(x_i) = x_{i+1}$ for $i=1,\dots, r-1$ and $f(x_r) = x_1$.  Then
    for any $g \in \cS$ such that $\deg g = d$ and $g|_\cO = f|_\cO$,
    we may apply Lemma \ref{derivative} to conclude that
    $f'(x_1)/g'(x_1)$ is a root of unity in $K$.  Furthermore, given
    any $g_1, g_2 \in \cS$ of degree $d$ such that
    $g_1|_\cO = g_2|_\cO = f|_\cO$, we have $g_1 = g_2$ whenever
    $g_1'(x_1) = g_2'(x_1)$.  Thus, the number of $g \in \cS$ such
    that $\deg g = d$ and $g|_\cO = f|_\cO$ is bounded by the number
    of roots of unity in $K$, which is $n$.  Since $\cO$ is finite,
    there are $|\cO|^{|\cO|}$ maps from $\cO$ to itself, so there are
    at most $n |\cO|^{|\cO|}$ elements of $\cS$ of degree $d$ for any
    $d>1$.
    \end{proof}

\begin{lem}\label{degree}
  Suppose that $\cS$ is a semigroup of rational functions in $\bC(x)$
  such that every element of $\cS^+$ is non-special and has the same
  set of preperiodic points.  Then there is some $a\ge 2$ such that
  every element of $\cS$ has degree $a^n$ for some $n\ge 0$.
\end{lem}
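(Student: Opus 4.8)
The goal is to show that all degrees appearing in $\cS$ are powers of a single integer $a \geq 2$. The starting point is Lemma \ref{deg-bound}, which tells us that $\cS$ has polynomially bounded growth in a very strong sense: the number of elements of any fixed degree is uniformly bounded by some $N$. Combined with the multiplicativity of degree (the degree of a composition is the product of the degrees), this forces a strong multiplicative constraint on the set $D = \{ \deg f : f \in \cS^+ \}$ of degrees occurring among elements of $\cS$ of degree greater than 1. The plan is to exploit the fact that $D$ is closed under multiplication (since $\cS^+$ is closed under composition) together with the counting bound to rule out the possibility that $D$ generates a multiplicative submonoid of $\mathbb{Z}_{>1}$ with two multiplicatively independent elements.

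First I would reduce to the statement that any two elements $d_1, d_2 \in D$ are multiplicatively dependent, i.e., $d_1^{s} = d_2^{t}$ for some positive integers $s,t$; once this is known, a standard argument shows all of $D$ is contained in $\{a^n : n \geq 1\}$ for $a$ the smallest element of $D$ (take $a = \min D$; every other $d \in D$ satisfies $d^s = a^t$, and unique factorization forces $d$ to be a power of $a$). So suppose for contradiction that $d_1, d_2 \in D$ are multiplicatively independent. Pick $f, g \in \cS^+$ with $\deg f = d_1$, $\deg g = d_2$. The number of words of length exactly $k$ in $f$ and $g$ is $2^k$, and the degree of such a word $\varphi_k \cdots \varphi_1$ is $d_1^{i} d_2^{k-i}$ where $i$ is the number of occurrences of $f$. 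Since $d_1$ and $d_2$ are multiplicatively independent, the map $(i, k-i) \mapsto d_1^i d_2^{k-i}$ is injective, so the $k+1$ values $\{ d_1^i d_2^{k-i} : 0 \leq i \leq k\}$ are distinct; moreover each is at most $(\max(d_1,d_2))^k =: M^k$. Thus among words of length $k$ we find elements of $\cS^+$ realizing at least $k+1$ distinct degrees, all bounded by $M^k$.

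Now I would count: the elements of $\cS^+$ represented by words of length at most $k$ in $\{f,g\}$ all have degree at most $M^k$, so they all have degree in the set $\{2, 3, \dots, M^k\}$; by Lemma \ref{deg-bound} there are at most $N$ elements of $\cS$ of each such degree, hence at most $N \cdot M^k$ elements of $\cS$ total in $S^{\leq k}$ for $S = \{f, g\}$. On the other hand, the number of distinct compositions $\varphi_k \cdots \varphi_1$ must be at least the number of distinct degrees they realize, which we just saw is at least $k+1$ — but that alone is far too weak. The real leverage is different: I should instead count directly. For a fixed degree value $v = d_1^i d_2^{k-i}$, the words of length $k$ mapping to that degree are exactly the $\binom{k}{i}$ words with $i$ copies of $f$; all of these are elements of $\cS$ of the single degree $v$, so by Lemma \ref{deg-bound} at most $N$ of them are distinct elements of $\cS$. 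Taking $i = \lfloor k/2 \rfloor$ gives $\binom{k}{\lfloor k/2 \rfloor}$ words, which grows like $2^k/\sqrt{k} \to \infty$, collapsing onto at most $N$ elements — this is not yet a contradiction either, since the semigroup is not left cancellative. So the honest contradiction comes from summing over all degrees realized at length exactly $k$: the total number of length-$k$ words is $2^k$, they realize at most $M^k$ distinct degree values but actually only $k+1$, and each degree class contains at most $N$ distinct semigroup elements, so $|\{$ elements of $\cS$ from length-$k$ words $\}| \leq N(k+1)$; yet we can check this set must also be related back to growth.

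**The main obstacle.** The genuine difficulty is precisely that $\cS$ is only right-cancellative, not left-cancellative (as the paper emphasizes with $X^2 \circ (-X) = X^2 \circ X$), so one cannot conclude that distinct words give distinct elements, and a naive growth argument fails. The right way around this, I expect, is to pass to the degree-$1$ part: compositions that agree as maps but differ as words must differ by a degree-$1$ element somewhere, and Lemma \ref{deg-bound} (via Lemma \ref{linfin}) bounds the degree-$1$ elements too. More precisely, if $w$ and $w'$ are words in $\{f,g\}$ of length $k$ with the same number of $f$'s (hence equal degree) representing the same element of $\cS$, one shows by a cancellation/unique-factorization-of-compositions argument — using that the common preperiodic-point set and Julia set are preserved, so the degree-$1$ ambiguities form a finite group as in Lemma \ref{linfin} — that the number of length-$k$ words collapsing to a single element is at most a constant $N'$ independent of $k$. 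Then $2^k = (\text{number of length-}k\text{ words}) \leq N' \cdot |\{\text{length-}k\text{ elements}\}| \leq N' \cdot N \cdot (k+1)$, which is absurd for large $k$. This contradiction establishes that $d_1, d_2$ are multiplicatively dependent, and the lemma follows. I would structure the writeup as: (1) state the multiplicative-dependence reduction; (2) set up the word-counting with multiplicatively independent $d_1, d_2$; (3) invoke Lemma \ref{deg-bound} plus the finiteness of degree-$1$ elements to bound word-collapse; (4) derive the contradiction $2^k = O(k)$.
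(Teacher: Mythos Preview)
Your proposal has a genuine gap at exactly the step you flag as the main obstacle. You assert that the number of length-$k$ words in $\{f,g\}$ collapsing to a single element of $\cS$ is bounded by a constant $N'$ independent of $k$, but the justification offered (``degree-$1$ ambiguities form a finite group as in Lemma \ref{linfin}'') does not apply: when two words $w = \varphi_k \cdots \varphi_1$ and $w' = \psi_k \cdots \psi_1$ in $\{f,g\}$ represent the same map, no degree-$1$ element appears anywhere in either expression, so Lemma \ref{linfin} gives no leverage. Right cancellation helps only when $\varphi_1 = \psi_1$. In effect your bound $N'$ is equivalent to asserting that $\langle f,g \rangle$ has exponential growth, and nothing you have established rules out, for instance, $fg = gf$ --- in which case all $\binom{k}{i}$ words with $i$ copies of $f$ collapse to a single element, so no such $N'$ can exist. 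The counting framework (Lemma \ref{deg-bound} supplying an \emph{upper} bound on elements per degree) simply cannot by itself furnish the \emph{lower} bound on the number of distinct elements that your intended contradiction $2^k \leq N' \cdot N \cdot (k+1)$ requires.

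The paper's proof bypasses all of this with a one-line citation: Levin's theorem \cite[Theorem 3]{Levin1} says directly that for non-special $f,g$ with the same Julia set there exist positive integers $m,n$ with $f^m g^n = f^{2m}$; comparing degrees yields $(\deg f)^m (\deg g)^n = (\deg f)^{2m}$, i.e.\ $(\deg g)^n = (\deg f)^m$. Multiplicative dependence of any two degrees in $\cS^+$ follows at once, and the remaining step is the elementary reduction you already described. Your detour through growth counting would, even if repaired, ultimately require an input of comparable strength to Levin's relation, so it is both longer and not more elementary.
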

\begin{proof}
By \cite[Theorem 3]{Levin1}, for any $f,g \in \cS^+$ there are $m$ and $n$ such that $f^m g^n
= f^{2m}$ so $(\deg f)^m = (\deg g)^n$.   Since the set $\cU$ of possible
degrees of elements in $\cS^+$ is a finitely generated subsemigroup of
the positive natural numbers under multiplication, it follows that
$\cU$ is contained in a semigroup generated by a single element $a$.   
\end{proof}

Now we are ready to state and prove Proposition \ref{g1}.

\begin{prop}\label{g1}
  Let $\cS$ be a finitely generated semigroup of rational functions in
  $\bC(x)$ such that $\cS^+$ contains a non-special rational function
  of degree greater than 1.  Suppose that $\mu_f = \mu_g$ for all
  $f, g \in \cS^+$.  Then $\cS$ has linear growth.
\end{prop}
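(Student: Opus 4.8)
The goal is to show that $\cS$ has linear growth, which means producing both an upper bound $d_S(n) \le C_2 n$ and a lower bound $d_S(n) \ge C_1 n$ for a generating set $S$. By Lemma \ref{constant}, we may discard any constant maps in a generating set at the cost of a factor of $2$, so we assume $\cS$ consists of nonconstant rational functions. The linear \emph{upper} bound is the heart of the matter: I would first invoke Lemma \ref{degree} to fix an integer $a \ge 2$ so that every element of $\cS$ has degree $a^k$ for some $k \ge 0$, and then invoke Lemma \ref{deg-bound} to fix the constant $N$ bounding the number of elements of $\cS$ of any given degree $d$. The point is that a word of length $\le n$ in the generators has degree at most $D^n$ where $D$ is the maximal degree of a generator, so it has degree $a^k$ with $k = O(n)$; hence the elements of $S^{\le n}$ are distributed among $O(n)$ distinct degree values $a^0, a^1, \dots, a^{O(n)}$, and by Lemma \ref{deg-bound} each such degree is realized by at most $N$ elements of $\cS$. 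Therefore $|S^{\le n}| \le N \cdot (O(n) + 1) = O(n)$, giving the linear upper bound.

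For the \emph{lower} bound, linear growth requires $d_S(n)$ to grow at least linearly; this fails only if $\cS$ is finite. If $\cS$ is finite then in particular every element has degree $1$ (an element of degree $>1$ generates an infinite semigroup under composition, since degrees multiply), so $\cS$ is a finite semigroup of Möbius transformations — but by hypothesis $\cS^+$ contains a non-special rational function of degree greater than $1$, so $\cS$ is infinite. An infinite finitely generated semigroup has $d_S(n) \to \infty$, and combined with the linear upper bound and the fact that $d_S$ is weakly increasing and integer-valued, one gets $d_S(n) \ge C_1 n$ for suitable $C_1 > 0$: indeed if $d_S$ were $o(n)$ it would still have to be unbounded, but an unbounded weakly increasing integer function that is $O(n)$ need not be $\ge C_1 n$ in general — so I need to argue more carefully. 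The cleanest route: since $\cS$ is infinite and finitely generated, for every $m$ there is a word of some length $\ell(m)$ representing an element not in $S^{\le \ell(m)-1}$; I would show $\ell(m)$ can be taken to grow at most linearly in $m$ by a pigeonhole argument on degrees again — each ``new degree level'' $a^k$ contributes at least one new element once $k$ is first reached, and $k$ reaches any given value at a word-length linear in $k$ (take a power of a single degree-$>1$ generator). This forces $d_S(n) \ge C_1 n$.

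**Main obstacle.** The subtle point is the lower bound, specifically ensuring that the growth is genuinely linear rather than merely $O(n)$ while being, say, $O(\log n)$ — one must exhibit $\Omega(n)$ distinct elements. This is where the existence of a degree-$>1$ element $f_0 \in \cS^+$ is used essentially: the powers $f_0, f_0^2, f_0^3, \dots$ have strictly increasing degrees $a^{k_0}, a^{2k_0}, \dots$ (writing $\deg f_0 = a^{k_0}$ with $k_0 \ge 1$), hence are pairwise distinct, and $f_0^j$ lies in $S^{\le cj}$ where $c$ is the length of the chosen word for $f_0$ in the generators. This gives $d_S(cj) \ge j$, i.e. $d_S(n) \ge n/c$ for $n$ in the arithmetic progression $c\ZZ$, and monotonicity of $d_S$ extends this to all $n$ up to adjusting the constant: $d_S(n) \ge \lfloor n/c \rfloor \ge (n - c + 1)/c$. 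Combined with the upper bound from Lemmas \ref{degree} and \ref{deg-bound}, this establishes linear growth, completing the proof.

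\begin{proof}[Proof of Proposition \ref{g1}]
Let $S$ be a finite generating set for $\cS$.  By Lemma \ref{constant}, removing any constant maps from $S$ changes $d_S(n)$ by at most a factor of $2$, so it suffices to prove the claim under the assumption that every element of $S$, and hence of $\cS$, is a nonconstant rational function.

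Since $\mu_f = \mu_g$ for all $f, g \in \cS^+$, Theorem 1.5 of \cite{YZ13} gives that $\Prep(f) = \Prep(g)$ for all $f, g \in \cS^+$ (the set $\Prep(f) \cap \Prep(g)$ contains all of $\cP$ and is in particular infinite).  By hypothesis, the elements of $\cS^+$ are non-special.  Hence the hypotheses of Lemmas \ref{deg-bound} and \ref{degree} are met.  By Lemma \ref{degree}, there is an integer $a \ge 2$ such that every element of $\cS$ has degree $a^k$ for some integer $k \ge 0$.  By Lemma \ref{deg-bound}, there is a constant $N$ such that for every $d \ge 1$, the number of elements of $\cS$ of degree $d$ is at most $N$.

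\textbf{Upper bound.}  Let $D = \max_{\varphi \in S} \deg \varphi$.  If $w$ is a word of length at most $n$ in the elements of $S$, then $\deg w \le D^n$ by \eqref{deg}.  Thus every element of $S^{\le n}$ has degree equal to $a^k$ for some $k$ with $0 \le k \le n \log D / \log a$.  The number of such values of $k$ is at most $n \log D / \log a + 1$, and each such degree $a^k$ is realized by at most $N$ elements of $\cS$.  Therefore
\[
  d_S(n) = |S^{\le n}| \le N \left( \frac{n \log D}{\log a} + 1 \right) \le C_2 n
\]
for a suitable constant $C_2 > 0$ and all $n \ge 1$.

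\textbf{Lower bound.}  By hypothesis, $\cS^+$ is nonempty; fix $f_0 \in \cS^+$, so $\deg f_0 = a^{k_0}$ with $k_0 \ge 1$.  The powers $f_0, f_0^2, f_0^3, \dots$ have degrees $a^{k_0}, a^{2k_0}, a^{3 k_0}, \dots$, which are pairwise distinct, so these elements of $\cS$ are pairwise distinct.  Write $f_0 = \psi_c \cdots \psi_1$ as a word of length $c$ in the elements of $S$.  Then $f_0^j$ is represented by a word of length $cj$, so $f_0, f_0^2, \dots, f_0^j$ are $j$ distinct elements of $S^{\le cj}$.  Hence $d_S(cj) \ge j$ for all $j \ge 1$.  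Since $d_S$ is weakly increasing, for any $n \ge 1$ we have $d_S(n) \ge d_S(c \lfloor n/c \rfloor) \ge \lfloor n/c \rfloor \ge (n - c + 1)/c$, so $d_S(n) \ge C_1 n$ for a suitable constant $C_1 > 0$ and all $n$ sufficiently large.

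Combining the two bounds, $\cS$ has linear growth.
\end{proof}
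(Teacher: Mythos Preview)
Your argument follows the same architecture as the paper's: reduce to nonconstant maps via Lemma \ref{constant}, invoke Lemma \ref{degree} to confine all degrees to powers of a fixed $a \ge 2$, invoke Lemma \ref{deg-bound} to bound the number of elements of each degree by a constant $N$, and combine these for the linear upper bound; the lower bound comes from iterating a single element of $\cS^+$. Your lower-bound argument is in fact slightly more careful than the paper's, which simply writes $|S^{\le n}| \ge n+1$ using ${\rm id}, f, f^2, \ldots, f^n$ for $f \in \cS^+$ without worrying about whether $f$ is itself a generator.

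There are, however, two slips in your verification of the lemma hypotheses. First, you write ``By hypothesis, the elements of $\cS^+$ are non-special,'' but the hypothesis only asserts that $\cS^+$ \emph{contains} one non-special map; the paper deduces that \emph{all} of $\cS^+$ is non-special from \cite{Levin1} (a special map cannot share its measure of maximal entropy with a non-special one). Second, and more substantively, your invocation of Theorem~1.5 of \cite{YZ13} to pass from $\mu_f = \mu_g$ to $\Prep(f) = \Prep(g)$ cites the implication in the wrong direction---that theorem says an infinite common preperiodic set implies equal measures, not the converse---and the parenthetical about $\cP$ is circular, since $\cP$ is only well-defined once you already know the preperiodic sets coincide. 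The paper does not make this passage explicit either; it simply applies Lemmas \ref{degree} and \ref{deg-bound} directly. The point is that the \emph{proofs} of those lemmas, via \cite{Levin1} and \cite{Ye}, actually only require $\mu_f = \mu_g$ in the non-special case, despite the lemma statements being phrased in terms of $\Prep$. So the fix is either to cite Levin's work for the implication $\mu_f = \mu_g \Rightarrow \Prep(f) = \Prep(g)$ in the non-special case, or simply to observe that the lemmas apply under the weaker measure hypothesis.
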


\begin{proof}
  It suffices to prove this when every element of $\cS$ is
  nonconstant, by Lemma \ref{constant}. Now, since some element of
  $\cS^+$ is non-special, by hypothesis, we see that all elements of
  $\cS^+$ are non-special (see \cite{Levin1}, for example).  Let
  $f_1,\ldots ,f_s$ be generators for $\cS$.  Then for each $i$, we
  have that there is some $a\ge 2$ such that ${\rm deg}(f_i)=a^{m_i}$,
  by Lemma \ref{degree}; let $M=\max(m_1,\ldots ,m_s)$.  Now consider
  the set $S^{\le n}$ of elements in $\cS$ formed by taking a
  composition of length $\le n$ of elements from $f_1,\ldots
  ,f_s$. These elements all have degree in
  $\{1,a,a^2,\ldots ,a^{Mn}\}$, so there is a natural number $N$ such
  that $|S^{\le n}| \leq MNn$ for all $n$, by Lemma \ref{deg-bound}.
  Clearly, $|S^{\le n}| \ge n+1$ since the elements
  ${\rm id},f,f^2,\ldots ,f^n$ are pairwise distinct for $f \in \cS^+$
  and so we see that $\cS$ has linear growth as claimed.
\end{proof}

We are now ready to prove state and prove Proposition \ref{g2}.  

\begin{prop}\label{g2}
  Let $\cS$ be a finitely generated semigroup
  of rational functions in $\bC(x)$.  Suppose that for any $f, g \in
  \cS^+$, we have $\Prep(f) = \Prep(g)$.  Then $\cS$ has polynomially bounded
  growth.   
\end{prop}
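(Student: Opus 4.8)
The plan is to push everything through the structure theory of special rational functions. By Lemma \ref{constant}, adjoining a constant generator changes the growth function by at most a bounded factor, so we may assume $\cS$ has no constant generators; we may also assume $\cS^+\neq\emptyset$, since otherwise $\cS$ is, up to constant maps, a finitely generated semigroup of M\"obius transformations, a case we do not treat here. Because $\Prep(f)=\Prep(g)$ is infinite for all $f,g\in\cS^+$, Theorem 1.5 of \cite{YZ13} (the input for Corollary \ref{same-measure}) gives that the measures of maximal entropy satisfy $\mu_f=\mu_g$ for all $f,g\in\cS^+$. If some element of $\cS^+$ is non-special then all of them are, by \cite{Levin1}, and Proposition \ref{g1} gives that $\cS$ has linear, hence polynomially bounded, growth. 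So it remains to treat, by a case analysis on type, the situation where every element of $\cS^+$ is special.

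A special rational function of degree $>1$ is conjugate to a power map $z^{\pm d}$, to $\pm T_d$, or to a Latt\`es map, and its measure of maximal entropy --- respectively arc-length on a circle, the equilibrium measure of a segment, and the pushforward of Haar measure under a finite covering $E\to\bP^1$ --- has a support that determines the type up to M\"obius conjugacy (and then the type determines the uniformizing data). So all elements of $\cS^+$ share one type, and after a fixed change of coordinates there is a single covering $\pi\colon G\to\bP^1$ --- with $G=\mathbb{G}_m$ in the power/Chebyshev case and $G=E$ an elliptic curve in the Latt\`es case, and $\pi$ the quotient by a fixed finite $\Gamma\le\mathrm{Aut}(G)$ --- such that every $f\in\cS^+$ lifts through $\pi$ to a self-map $\Phi_f$ of $G$ of the form (an endomorphism $m_f\in\End(G)$ of degree $\deg f$) followed by (a group-translation $t_f$), with $f$ recovered from $\Phi_f$ up to $\Gamma$. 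Two facts then force polynomial growth: (i) $\End(G)$ is commutative --- it is $\bZ$ for $\mathbb{G}_m$ and $\bZ$ or an order in an imaginary quadratic field for $E$ --- and under composition the endomorphism parts multiply, so $\Phi_f\Phi_g$ has endomorphism part $m_fm_g$; and (ii) the translations that can occur all lie in one fixed finite subgroup $\Gamma_0\le G$, namely $E[\zeta_0-1]$ (so $E[2]$ for the usual degree-$2$ Latt\`es maps) in the Latt\`es case, and a fixed finite group of roots of unity in the power/Chebyshev case (the least common multiple of the orders of the coefficients of the finitely many generators --- and here the hypothesis $\Prep(f)=\Prep(g)$ is exactly what forces those coefficients to be roots of unity). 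Moreover $\Gamma_0$ is carried into itself by each of the operations $t\mapsto m_it+t'$, so the translation part of the lift of any word in the generators again lies in $\Gamma_0$.

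The degree-$1$ elements of $\cS$ are handled by the same rigidity: if $\sigma\in\cS$ has degree $1$ and $f\in\cS^+$, then $\sigma f\in\cS^+$ satisfies $\mu_{\sigma f}=\mu_f$, hence lifts to a self-map of $G$ over the same $\pi$, and comparing with the lift of $f$ shows $\sigma$ itself lifts to a self-map whose endomorphism part is a unit of $\End(G)$ and whose translation part lies in $\Gamma_0$. Consequently every nonconstant element of $\cS$ lifts, through $\pi$, to an endomorphism $m$ of $G$ followed by a translation $t\in\Gamma_0$, where $m$ runs over the multiplicative monoid generated by the finitely many endomorphism parts $m_1,\dots,m_s$ of the generators. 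Since $\End(G)$ is commutative, the endomorphisms arising from words of length at most $n$ are of the form $\prod m_i^{e_i}$ with $e_i\ge0$ and $\sum e_i\le n$, so there are only $O(n^s)$ of them; as $\Gamma_0$ is finite and $f$ is determined by the pair (endomorphism part, translation part) up to the finite group $\Gamma$, we get $|S^{\le n}|=O(n^s)$, i.e. $\cS$ has polynomially bounded growth.

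I expect the main obstacle to be the second paragraph: proving that all elements of $\cS^+$ are special of one common type over one common uniformization, and pinning down the exact shape of the lifts and of the finite translation group $\Gamma_0$ in each of the three cases. This is where the classification of special maps and the rigidity of the measure of maximal entropy (Zdunik-type theorems, and the structure theory of Latt\`es maps) are needed, together with a fiddly case-by-case verification --- in particular that the translation parts cannot escape a fixed finite group. That last point is what keeps the power-map case from behaving like $\langle z^2,\,2z^2\rangle$, which has exponential growth but violates the hypothesis of the proposition since $2$ is not a root of unity and $\Prep(z^2)\neq\Prep(2z^2)$.
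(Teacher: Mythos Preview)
Your approach is genuinely different from the paper's. The paper argues case by case: for each special type it invokes the Kawaguchi--Silverman classification \cite{KawaSil} to show that only boundedly many maps of a given degree share the common preperiodic set (Chebyshev gives $\pm T_d$; power gives $\xi x^d$ with $\xi$ a root of unity in a fixed finitely generated field; Latt\`es gives finitely many $\pi_f$ of degree $2,3,4,6$ and finitely many conjugating M\"obius maps, via \cite[Theorems~27 and~30]{KawaSil} and \cite{Milnor-Lattes}), and then bounds the number of possible degrees in $S^{\le n}$. You instead lift the whole semigroup through a single finite quotient $\pi\colon G\to\bP^1$ to affine self-maps $z\mapsto mz+t$ of a one-dimensional algebraic group, and let the commutativity of $\End(G)$ together with the finiteness of the translation group do the counting uniformly. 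This is conceptually cleaner, and your observation that in the Latt\`es case the translation part must lie in $E[\zeta_0-1]$ is a nice intrinsic replacement for the paper's appeal to finiteness of $E(K)_{\tors}$. Two points need work, however. First, you explicitly skip $\cS^+=\emptyset$; the paper handles this as Case~I via \cite[Theorem~1.5]{Ok1}, and you should cite it too. Second---and you rightly flag this as the main obstacle---the existence of a \emph{single} $\pi$ is not immediate in the Latt\`es case: the paper allows a different $\pi_f$ for each $f$ and controls the finite ambiguity externally, whereas your route (the common $\mu$ determines $\deg\pi$, hence $\Gamma$, hence $\pi$ up to the finite group of M\"obius symmetries of $\mu$) is correct in outline but must be spelled out. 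Likewise, the lifting of the degree-$1$ elements needs more than ``compare with the lift of $f$,'' since $\Phi_f$ is not invertible; what you actually want is that any M\"obius $\sigma$ preserving $\mu$ lifts to an automorphism of $G$ normalizing $\Gamma$, which then lands in your affine description with $m\in\End(G)^\times$ and $t\in\Gamma_0$.
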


\begin{proof}
  Again, we may assume that every element of $\cS$ is nonconstant, by
  Lemma \ref{constant}.
  Because of Proposition \ref{g1}, we need only treat the cases where
  every element of $\cS$ is linear or $\cS^+$ contains a special
  rational function.  We treat them case-by-case.

\noindent {\bf Case I.  Every element of $\cS$ is linear}.  In this case, the result is
contained in \cite[Theorem 1.5]{Ok1}.

\noindent {\bf Case II. Some element of $\cS^+$ is conjugate to a Chebychev polynomial.}

Let $f \in \cS^+$ be conjugate to a Chebychev polynomial.  Then there is a
$\sigma \in \bC(x)$ such that
$\sigma^{-1} f \sigma = T_d \in \Qbar(x)$, where $T_d$ is the
Chebychev polynomial of degree $d$.  If there is some $g \in \cS^+$
such that $\sigma^{-1} g \sigma \notin \Qbar(x)$, then clearly
$\Prep(f) \not= \Prep(g)$.  Hence we may assume that
$\sigma^{-1} \cS^+ \sigma \subseteq \Qbar(x)$. Furthermore we may
assume that $h_{\sigma^{-1} g \sigma} = h_{T_d}$ for all $g \in \cS^+$
by Theorem \ref{Northcott}.  Thus, we may apply \cite[Theorem
1]{KawaSil} to conclude that for all $g \in \cS^+$, we have some $d$
such that $\sigma^{-1} g \sigma = \pm T_d$.  Let $S^{\le n}$ be the words of
length $n$ in some finite set $S$ of generators for $\cS$.  Then the number
of possible degrees for elements of $S^{\le n}$ is bounded by $O(n^s)$
where $s = |S|$, so the number of
elements of $S^{\le n}$ of degree greater than 1 is bounded by
$O(n^s)$ since there are at most two elements in $\cS^+$ having the
same degree by the above. The number of elements of $S^{\le n}$ of
degree 1 is also bounded by a polynomial in $n$ by \cite[Theorem
1.5]{Ok1}.  Hence $\cS$ has polynomially bounded growth.

\noindent {\bf Case III. Some element of $\cS^+$ is conjugate to $X^m$.}
 
There is some $f \in \cS^+$ and some $\sigma \in \bC(x)$ such that
$\sigma^{-1} f \sigma = X^m$ for some integer $m$ with $|m| > 2$.
Then, as in the Chebychev case, either
$\sigma g \sigma^{-1} \in \Qbar(x)$ for every $g \in \cS^+$.  By
\cite[Theorem 2]{KawaSil}, we must also have
$\sigma g \sigma^{-1} = \xi x^n$.  Since all the elements of $\cS$ are
defined over $K$, there are finitely many roots of unity $\xi$ that
are possible.  Thus there is a bound on the number of elements of
$\cS^+$ of any given degree.  Let $S^{\le n}$ be the words
in $\cS$ of length $n$ on some finite set of generators.  As in the
Chebychev case, the number of words of in $S^{\leq n}$ of degree
greater than 1 is bounded by $O(n^s)$ where $s$ is the number of
generators and the number of elements of $S^{\le n}$ of degree 1 is
also bounded by a polynomial in $n$ by \cite[Theorem 1.5]{Ok1}.

\noindent {\bf Case IV. Some element of $\cS^+$ is Latt\`es.}

Let $K$ be a finitely generated field such that every element of $\cS$
is defined over $K$.  Recall that a Latt\`es map for an elliptic curve
$E$ is a map $f: \bP^1 \lra \bP^1$ of degree greater than 1 such that
there are nonconstant maps $\phi: E \lra E$ and $\pi: E \lra \bP^1$
with the property that $f \pi = \pi \phi$.  Suppose that some element
$f$ of $\cS$ is a Latt\`es map for an elliptic curve $E$.  Since
$\Prep(g) = \Prep(f)$ for every $g \in \cS^+$, it follows from
\cite[Theorem 27]{KawaSil} that every element of $\cS^+$ is a Latt\`es
map for $E$.  Hence, for each $f \in \cS^+$, there is a
$\pi_f: E \lra \bP^1$ and a $\phi_f: E \lra E$ such that
$f \pi_f = \pi_f \phi_f$.  By \cite[Theorem 3.1]{Milnor-Lattes}, the
degree of $\pi_f$ is either 2,3,4, or 6 (note that the possibilities
of 3, 4, and 6 can only arise when the endomorphism ring of $E$ is an
order in an imaginary quadratic field containing roots of unity other
than $\pm 1$).  Furthermore, by Theorem \cite[Theorem 30]{KawaSil}, if
$\deg f = \deg g$ and $\deg \pi_f = \deg \pi_g$, then
$g = \sigma_g f \sigma_g^{-1}$ for some $\sigma_g \in K(x)$ of degree
1.  After replacing $K$ by a suitable finite extension, we may assume
that there are at least three points in $\Prep(f) \cap K$.  Since
$\sigma$ must send $\Prep(f) \cap K$ to itself, there are finitely
many possible $\sigma_g$.

So now we have that there is a finite set $\Pi$ of maps
$\pi: E \lra E$ such that for any $g \in \cS^+$ we have
$g \pi_g = \pi_g \theta_g$ for some $\pi_g \in \Pi$ and some
$\theta_g: E \lra E$.  Let $S^{\le n}$ be the set of elements of $\cS$
represented by words of length $n$ in some set of generators for
$\cS$.  Since there are at most $|\Pi|$ possible choices for $\pi_g$
for $g \in \cS^+\cap S^{\le n}$, we will done if we can show that the
number of $\theta_g$ for $g \in \cS^+ \cap S^{\le n}$ is bounded by a
polynomial in $n$ (as before, the number of elements of degree 1 in
$S^{\le n}$ is bounded by a polynomial in $n$ by \cite[Theorem
1]{Ok1}).  Let $s$ be the number of distinct degrees of elements in
some generating set of $\cS$.  The number of possible degrees of
elements in $S^{\le n}$ is then bounded by $O(n^s)$.  Now let $M$ be
the maximal degree of an element of our generating set and let
$\cP = \{p_1, \dots, p_t\}$ be the set of primes dividing the degrees
of the elements of our generating set.  Then every element of
$S^{\le n} \cap \cS^+$ has degree bounded by $M^n$ and is divisible
only by elements of $\cP$.  Now, each $\theta_g$ can be written as
$m_g + t_g$ where $m_g \in \End(E)$ and $t_g$ is translation by
torsion point.  Since there are only finitely many torsion points of
$E$ in $K$, only finitely many translations arise, and we know that
the number of different possible degrees of $m_g$ for
$g \in \cS^+ \cap S^{\le n}$ is bounded by a polynomial in $n$, we
will be done if we can show that the number of $m_g$ having fixed
degree for $g \in \cS^+ \cap S^{\le n}$ is bounded by a polynomial in
$n$.  If $\End(E) = \bZ$, this is clear (that number is 2).  Otherwise
$\End(E)$ is an order in an imaginary field.  In the ring of integers
of an imaginary quadratic field the number of ideals having norm
$N = p_1^{e_1} \cdots p_t^{e_t}$ is bounded by
$(e_1 + 1) \cdots (e_t + 1)$ (this can be seen by noting that an ideal
having norm $p^\ell$ has at most $\ell+1$ possible factorizations --
see \cite[Chapter 17]{Hardy-Wright}, for example).  Since there at
most 6 units in an order in an imaginary quadratic field, this means
that the number of elements having norm $N$, where $N$ is a number
whose only prime factors are in $\{p_1, \dots, p_t\}$ is bounded by
$O((\log_2 N)^t)$.  For $N \leq M^n$, we see that $(\log_2 N)^t$ is
bounded by a polynomial in $n$; thus, the number of $m_g: E \lra E$
corresponding to a $g \in \cS^+ \cap S^{\le n}$ having fixed degree
is indeed bounded by a polynomial in $n$, and our proof is complete.
\end{proof}

\begin{remark}\label{cx}
  The proof of Proposition \ref{g2} shows that for a finitely
  generated field $K$ and any rational function $f \in K(x)$ that is not a
  Latt\`es map, there is a constant
  $N(f,K)$ such that for any $d$, the set of rational functions $g$ of
  degree $d$ such that $\Prep(g) = \Prep(f)$ has at most
  by $N(f,K)$ elements.  This is clearly not true for Latt\`es maps associated
  to elliptic curves with complex multiplication since the
  number of elements of an order in a quadratic number field having
  fixed norm can be arbitrarily large.
\end{remark}

Now we can easily prove Theorems \ref{linear} and \ref{rational}.

\begin{proof}[Proof of Theorem \ref{linear}]
By Theorem \ref{common}, the semigroup $\cS$ contains a free
subsemigroup on two elements unless $\Prep(f) = \Prep(g)$ for all $f,
g \in \cS^+$.  If $\Prep(f) = \Prep(g)$ for all $f, g \in \cS^+$, then
$\cS$ has linear growth by Proposition \ref{g1}.  
\end{proof}

\begin{proof}[Proof of Theorem \ref{rational}]
  As above, the semigroup $\cS$ contains a free subsemigroup on two
  elements unless $\Prep(f) = \Prep(g)$ for all $f, g \in \cS^+$, by
  Theorem \ref{common}.  If $\Prep(f) = \Prep(g)$ for all
  $f, g \in \cS^+$, then $\cS$ has polynomially bounded growth by
  Proposition \ref{g2}.  
\end{proof}

\begin{cor}\label{fromBD}
  Let $f, g \in \bC(X)$ be two rational functions, each having degree
  greater than 1.  Then the following are equivalent:
  \begin{enumerate}
    \item[(i)] $\Prep(f) \cap \Prep(g)$ is infinite;
    \item[(ii)] $\Prep(f) = \Prep(g)$;
      \item[(iii)] for any $\varphi_1, \varphi_2 \in \langle f, g
        \rangle$, we have $\Prep(\varphi_1) = \Prep(\varphi_2)$;
  \item [(iv)] $\langle f, g \rangle$ has polynomial growth;
    \item [(v)] $\langle f, g \rangle$ does not contain a nonabelian
      free semigroup;
      \item[(vi)] for any $\ell  > 0$, the semigroup $\langle f^\ell, g^\ell
        \rangle$ is not the free semigroup on two generators.
  \end{enumerate}
\end{cor}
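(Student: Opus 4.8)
The plan is to establish the equivalences by proving a cycle of implications, supplemented by a few direct cross-links, so that each of (i)--(vi) is tied to the central dichotomy provided by Theorem \ref{common} together with Propositions \ref{g1} and \ref{g2}. The natural backbone is the chain $(i)\Rightarrow(ii)\Rightarrow(iii)\Rightarrow(iv)\Rightarrow(v)\Rightarrow(vi)\Rightarrow(i)$, with the understanding that some arrows will be more convenient to prove ``out of order'' and then composed.

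First I would handle the implication $(i)\Rightarrow(ii)$: this is exactly the content of the Fakhruddin/Yuan--Zhang-type rigidity statement invoked in Corollary \ref{same-measure} and the remark following it, namely Theorem 1.5 of \cite{YZ13} (see also \cite{Carney}), which gives that $\Prep(f)\cap\Prep(g)$ infinite forces $\mu_f=\mu_g$; for rational functions of degree $>1$ over $\bC$, equality of the measures of maximal entropy is equivalent to equality of the sets of preperiodic points. (If one prefers to avoid the measure-theoretic input, note that $(i)\Rightarrow(ii)$ for polynomials follows from the Julia-set description, and in the general rational case one can cite Baker--DeMarco, as the corollary's label ``fromBD'' suggests.) The implication $(ii)\Rightarrow(iii)$ is the only slightly delicate purely formal step: given $\Prep(f)=\Prep(g)$, one must show $\Prep(\varphi_1)=\Prep(\varphi_2)$ for all $\varphi_1,\varphi_2\in\langle f,g\rangle$. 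This follows because $\Prep(\varphi)$ for a word $\varphi=\psi_m\cdots\psi_1$ in $f,g$ is forced to equal the common set $\cP=\Prep(f)=\Prep(g)$: each $\psi_i$ permutes $\cP$ (indeed $\psi_i(\cP)=\cP$ since $\psi_i^{-1}(\cP)\supseteq\cP$ by $\psi_i$-invariance and both are preperiodic for $\psi_i$), so any point of $\cP$ has finite forward orbit under $\varphi$, giving $\cP\subseteq\Prep(\varphi)$; conversely $\Prep(\varphi)\cap\Prep(f)$ is infinite (it contains $\cP$), so by $(i)\Rightarrow(ii)$ applied to the pair $\varphi,f$ we get $\mu_\varphi=\mu_f$ and hence $\Prep(\varphi)=\cP$.

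Next, $(iii)\Rightarrow(iv)$ is immediate from Theorem \ref{rational} (or directly from Propositions \ref{g1} and \ref{g2}): if every pair of elements of $\cS^+:=\langle f,g\rangle^+$ has the same preperiodic set, then $\langle f,g\rangle$ has polynomially bounded growth. The implication $(iv)\Rightarrow(v)$ is elementary: a nonabelian free semigroup on two generators has exponential growth, and any sub-semigroup of a polynomially-bounded-growth semigroup again has polynomially bounded growth (a free subsemigroup inside $\langle f,g\rangle$ would force $d_S(n)$ to grow at least like $2^n$), so polynomial growth precludes a nonabelian free subsemigroup. For $(v)\Rightarrow(vi)$, observe that if $\langle f^\ell,g^\ell\rangle$ were the free semigroup on two generators for some $\ell>0$, it would be a nonabelian free subsemigroup of $\langle f,g\rangle$, contradicting $(v)$. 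Finally, $(vi)\Rightarrow(i)$ closes the loop: if $\Prep(f)\cap\Prep(g)$ were finite, then $\Prep(f)\not=\Prep(g)$, so by Theorem \ref{poly2}---no wait, by Theorem \ref{common} applied with $V=\bP^1$, $\CL=\cO_{\bP^1}(1)$---there is a positive integer $j$ with $\langle f^j,g^j\rangle$ a free semigroup on two generators, contradicting $(vi)$ (take $\ell=j$).

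I expect the main obstacle to be the rigidity input $(i)\Rightarrow(ii)$: everything else is either a citation to the paper's own Theorems \ref{common} and \ref{rational} or a short growth/combinatorial argument, but the step that ``infinitely many common preperiodic points $\Rightarrow$ equal preperiodic sets'' genuinely relies on the deep equidistribution/rigidity results (Yuan--Zhang, Baker--DeMarco) and must be cited carefully, since it is false for the measures alone (as the remark after Corollary \ref{same-measure} stresses, $X^2$ and $\omega X^2$ have $\mu_f=\mu_g$ but different preperiodic sets). A secondary point requiring care is making sure $(ii)\Rightarrow(iii)$ is watertight for \emph{all} words, not just $f$ and $g$ themselves; the cleanest route is the two-step argument above (invariance gives one inclusion, rigidity applied to $(\varphi,f)$ gives the reverse), rather than trying to track preperiodic sets through compositions directly.
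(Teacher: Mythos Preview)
Your overall architecture---the cycle $(i)\Rightarrow(ii)\Rightarrow(iii)\Rightarrow(iv)\Rightarrow(v)\Rightarrow(vi)\Rightarrow(i)$---matches the paper's, and your citations for $(iii)\Rightarrow(iv)$, $(iv)\Rightarrow(v)$, $(v)\Rightarrow(vi)$, and $(vi)\Rightarrow(i)$ are all correct. There are, however, two genuine gaps.

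First, your primary argument for $(i)\Rightarrow(ii)$ is wrong, and you actually catch yourself: you assert that for rational maps over $\bC$ ``equality of the measures of maximal entropy is equivalent to equality of the sets of preperiodic points,'' but then in your final paragraph you give the counterexample $X^2$ and $\omega X^2$ with $|\omega|=1$ non-root-of-unity. So the route through $\mu_f=\mu_g$ via \cite{YZ13} does not by itself yield $\Prep(f)=\Prep(g)$. The paper simply cites \cite[Theorem~1.2]{BD11} (Baker--DeMarco) for the full equivalence $(i)\Leftrightarrow(ii)$; you should do the same and drop the measure step as your main argument.

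Second, and more substantively, your $(ii)\Rightarrow(iii)$ argument has a hole. You write that each $\psi_i$ permutes $\cP=\Prep(f)=\Prep(g)$, ``so any point of $\cP$ has finite forward orbit under $\varphi$.'' But a bijection of an infinite set can perfectly well have infinite orbits, so permuting $\cP$ does not imply finite $\varphi$-orbits. What is needed is the observation the paper makes: a point $z\in\cP$ and the map $\varphi$ are defined over some finitely generated field $K$, and $\cP\cap \bP^1(K)$ is finite by Northcott (Theorem~\ref{Moriwaki} in the paper). Since $\varphi(\cP)\subseteq\cP$ and the $\varphi$-orbit of $z$ lies in $\cP\cap\bP^1(K)$, it is finite, so $z\in\Prep(\varphi)$. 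Once you have $\cP\subseteq\Prep(\varphi)$, your reverse inclusion via \cite{BD11} applied to the pair $(\varphi,f)$ is exactly what the paper does. With these two fixes your proof is complete and essentially identical to the paper's.
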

\begin{proof}
  It is clear that (iii) implies (ii), that (iv) implies (v), and that
  (v) implies (vi).  Theorem 1.2 of \cite{BD11} (see also \cite{Mim13,
    YZ17, YZ13, Carney}) states that (i) and (ii) are equivalent.  Proposition
  \ref{g2} shows that (iii) implies (iv).  By Theorem \ref{common}, we
  have (vi) implies (iii).  Hence, we will be done if we can show that
  (ii) implies (iii).  Assume (ii) holds.  Let
  $\cU = \Prep(f) = \Prep(g)$.  Then $f(\cU) \subseteq \cU$ and
  $g(\cU) \subseteq \cU$. Let $\varphi \in \langle f, g \rangle$.
  Then we have $\varphi(\cU) \subseteq \cU$. Since $\cU$ contains at
  most finitely many points defined over any finitely generated field
  by Theorem \ref{Moriwaki}, it follows that for any $z \in \cU$, the
  orbit of $z$ is finite under $\varphi$, so
  $\cU \subseteq \Prep(\varphi)$.  Since $\Prep(f)$ and $\Prep(g)$ are
  infinite, it follows from Theorem 1.2 of \cite{BD11} that we have
  $\Prep(\varphi) = \Prep(f) = \Prep(g)$.  Thus,
  $\Prep(\varphi_1) = \Prep(\varphi_2) = \Prep(f) = \Prep(g)$ for any
  $\varphi_1, \varphi_2 \in \langle f, g \rangle$.
  \end{proof} 
\section{Proofs of Theorems \ref{poly1} and \ref{poly2}}\label{poly-sec}

We will prove a theorem that is slightly more general than Theorem
\ref{poly1}.  First a bit of notation.   Let $\varphi(\alpha) =
\alpha$ for $\varphi$ a nonconstant rational function in $\bC(x)$ and
$\alpha \in \bP^1(\bC)$.  We
let $e_\varphi(\alpha)  \geq 1$ denote the degree of the term of
lowest positive degree 
in the formal power series expansion of $\varphi$ centered at $\alpha$
(i.e. in $\varphi$ written locally as an element of $K[[(X-\alpha)]]$).  

\begin{thm}\label{poly1-gen}
Let $\cS$ be a finitely generated semigroup of rational functions in
$K(X)$.  Suppose that one of the following statements holds:
\begin{enumerate}
\item Every nonconstant element of $\cS$ has degree 1.
  \item There is an $\alpha \in \bP^1(\bC)$ fixed by every element of
    $\cS$ such that that $e_f(\alpha) >1$ for some nonconstant $f \in \cS$ and
    $\Char K \nmid e_g(\alpha)$ for every nonconstant element of $\cS$.  
  \end{enumerate}
Then either $\cS$ contains a nonabelian free semigroup or
$\cS$ has polynomially bounded growth.  
\end{thm}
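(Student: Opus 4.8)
The plan is to reduce both cases to a statement about the group of units of a formal power series ring, using B\"ottcher-type coordinates as in Jiang--Zieve \cite{JZ}, and then to invoke the Okni\'nski result \cite{Ok1} on linear semigroups to handle the purely linear part. In case (1) every nonconstant element of $\cS$ is a degree-$1$ map; since degree-$1$ rational functions form the linear group $\PGL_2(K)$ acting on $\bP^1$, after handling the constant maps with Lemma \ref{constant} the semigroup $\cS$ sits inside a finitely generated cancellative linear semigroup, and \cite[Theorem 1.5]{Ok1} gives directly that $\cS$ either has polynomially bounded growth or contains a nonabelian free semigroup. So the real content is case (2), and the rest of this proposal concerns that case.

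For case (2): by Lemma \ref{constant} we may assume every element of $\cS$ is nonconstant; moving $\alpha$ to $0$, we may assume every $f\in\cS$ fixes $0$, so each $f$ has a local expansion $f(X) = c_f X^{e_f} + (\text{higher order})$ with $c_f\neq 0$ and $e_f = e_f(0)$, and $\Char K\nmid e_f$ for all $f$, with some $f_0\in\cS$ having $e_{f_0}>1$. The key step is the formal linearization/B\"ottcher argument from \cite{JZ}: since $\Char K\nmid e_f$, there is a formal power series $\psi_f(X) = X + O(X^2)\in \overline{K}[[X]]$, or more precisely a formal coordinate, conjugating $f$ into its "normal form" $\lambda_f X^{e_f}$ at $0$ when $e_f>1$ (B\"ottcher coordinate), and into $\lambda_f X$ when $e_f = 1$ (Koenigs coordinate, using $\Char K\nmid$ multiplier or handling the parabolic case separately). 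The multiplicativity of degrees, $e_{fg} = e_f e_g$, together with the composition behavior of these coordinates, means that the data $(e_f, \lambda_f)$ behaves like a pair: the $e$-part multiplies, and the $\lambda$-part transforms in a controlled way. Concretely, I would show that a word $w = \varphi_m\cdots\varphi_1$ in the generators is determined, up to finitely many choices, by the product of degrees $\prod e_{\varphi_i}$ and by a "leading coefficient" in $\overline{K}^\times$ that lies in a finitely generated multiplicative group (generated by the $\lambda$'s of the generators and their conjugates). Counting: the number of possible degree-products realized by words of length $\le n$ in $s$ generators is polynomially bounded in $n$ (it is at most the number of monomials of degree $\le n$ in $s$ variables, $O(n^s)$), and for each fixed degree-product the number of distinct elements of $\cS$ is bounded by a constant (the number of leading-coefficient values times a bounded combinatorial factor) provided the leading coefficient pins down the map. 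This last implication — that within $\cS$, an element of given degree fixing $0$ is determined by finitely much formal data — is exactly where I expect the main work, and it is the analogue of Lemma \ref{deg-bound}; it should follow because two elements $g_1,g_2\in\cS$ of the same degree with the same B\"ottcher coordinate normal form are formally conjugate, hence (being rational functions agreeing to infinite order with an automorphism of $\bP^1$ in the right coordinates, or more precisely: $g_1\circ g_2^{-1}$ fixes $0$ with multiplier $1$ and infinite formal contact) must be equal, by an argument like Lemma \ref{HexiLem} or directly from uniqueness of B\"ottcher coordinates up to an $(e-1)$-st root of unity.

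The hard part will be controlling the leading-coefficient group and checking that it is finitely generated: the B\"ottcher/Koenigs coordinates of a composite are built from those of the factors by substitution, and one must verify that the resulting leading coefficient lands in the multiplicative group generated by finitely many explicit elements (the coefficients coming from the generators) with exponents bounded linearly in the word length $n$ — so that the number of values among words of length $\le n$ is polynomially bounded. Once that bookkeeping is in place, combining the $O(n^s)$ bound on degree-products, the polynomial bound on leading-coefficient values per degree, the constant bound per (degree, coefficient) pair, and the polynomial bound on the degree-$1$ part from \cite[Theorem 1.5]{Ok1} (applied to the subsemigroup generated by the linear generators, noting the linear elements of $S^{\le n}$ form such a semigroup's $n$-ball up to constants), yields $|S^{\le n}| = O(n^{\kappa})$ for a suitable $\kappa$, which is the desired polynomially bounded growth. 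Since $\cS$ contains $\{ \mathrm{id}, f_0, f_0^2,\dots\}$ with $f_0\in\cS$ of degree $>1$, in fact one gets a clean dichotomy, and in particular $\cS$ cannot simultaneously contain a nonabelian free semigroup, completing case (2) and hence the theorem.
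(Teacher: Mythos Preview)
Your handling of case (1) matches the paper's: reduce via Lemma \ref{constant} to nonconstant maps and quote \cite{Ok1}. The problem is case (2), where your outline tries to prove polynomial growth \emph{unconditionally}, and this is simply false. Take $f=X^2$ and $g=X^2+X^3$: both fix $0$ with $e_f(0)=e_g(0)=2$, yet $\langle f,g\rangle$ is a free semigroup on two generators by Lemma \ref{JZ2}, hence has exponential growth. So no argument of the shape ``each element of $\cS$ is determined up to finitely many choices by its degree and a leading coefficient'' can possibly work in general. The failure point is precisely the step you flag as ``where I expect the main work'': in the B\"ottcher coordinate $L$ of a fixed $f_0$, another element $g$ becomes an arbitrary power series $cX^n+(\text{higher})$ in $K[[X]]$, not a monomial, and the pair $(n,c)$ does not pin down $g$. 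Your proposed justification via ``an argument like Lemma \ref{HexiLem}'' cannot be made to work here either, since that lemma is genuinely complex-analytic (Julia sets, measures of maximal entropy) and has no analogue over an arbitrary field $K$; and ``uniqueness of B\"ottcher coordinates'' only says one map has an essentially unique linearizer, not that two different maps with the same leading jet must coincide.

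What the paper does instead is use the dichotomy built into the Jiang--Zieve lemmas. Fix $f\in\cS$ with $e_f(0)=m>1$ and pass to its B\"ottcher coordinate $L$, so $L^{-1}fL=X^m$. Now test every other $g\in\cS$: if $L^{-1}gL$ has at least two terms (Lemma \ref{JZ2}), or is $\alpha X^n$ with $\alpha$ of infinite order (Lemma \ref{JZ3}), or is $\alpha X$ with $\alpha$ of infinite order (Lemma \ref{4} applied to $fg$ and $gf$), or is linear with a nonzero higher term (Lemma \ref{JZ2} applied to $f$ and $gf$), then $\cS$ contains a nonabelian free semigroup and we are done. Otherwise every $L^{-1}gL$ is $\xi X^{e}$ with $\xi$ a root of unity, and \emph{now} the counting you describe is trivially correct: at most $O(n^s)$ degrees appear in $S^{\le n}$ and only finitely many roots of unity occur, giving polynomial growth. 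The freeness lemmas are not an optional shortcut; they are what produces the dichotomy the theorem asserts.
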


We begin with some lemmas from Jiang and Zieve.

\begin{lem}\label{JZ1} (\cite[Lemma 2.1]{JZ})
  Let $f \in K[[X]]$ have a lowest degree term of degree $m > 1$ where
  $(\Char K) \nmid m$.  Then, there is a finite extension $K'$ of $K$
  and an $L \in K'[[X]]$ with lowest degree term of degree 1 such that $L^{-1}
  \circ f \circ L
  = X^m$.  
  
\end{lem}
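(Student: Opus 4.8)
This is the classical existence statement for the formal B\"ottcher coordinate of $f$ at its fixed point $0$, and the plan is to run the standard two-step argument: first normalize the leading coefficient to $1$ by a linear change of variable --- the only place a field extension enters --- and then build the conjugating series coefficient by coefficient.

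Write $f(X) = a_m X^m + a_{m+1}X^{m+1} + \cdots$ with $a_m \ne 0$. Choose $\lambda$ with $\lambda^{m-1} = a_m^{-1}$ and set $K' = K(\lambda)$, a finite extension of $K$. Conjugating $f$ by the scaling $\sigma(X) = \lambda X$ gives $\sigma^{-1}\circ f\circ\sigma(X) = \lambda^{-1}f(\lambda X) = X^m + \sum_{j\ge m+1}c_jX^j$ with all $c_j \in K'$ (since $a_m\lambda^{m-1} = 1$). Because conjugation by $\sigma$ can be absorbed into the final coordinate, we may assume $f(X) = X^m + \sum_{j\ge m+1}c_jX^j \in K'[[X]]$; the passage to $K'$ is genuinely needed, as $a_mX^m$ is conjugate to $X^m$ by a scaling only when $a_m$ is an $(m-1)$-st power in the ground field.

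It then suffices to find $L(X) = X + \sum_{n\ge 2}u_nX^n \in K'[[X]]$ with $f\circ L = L\circ(X\mapsto X^m)$, i.e. $f(L(X)) = L(X^m)$, for then $L^{-1}\circ f\circ L = X^m$. I would determine $u_2, u_3, \dots$ recursively by equating coefficients of $X^{m+1}, X^{m+2}, \dots$ on the two sides. For $n \ge 2$, expanding $f(L(X)) = L(X)^m + \sum_{j\ge m+1}c_jL(X)^j$, the coefficient of $X^{m+n-1}$ on the left equals $m\,u_n + P_n$, where $P_n$ is an explicit polynomial in $u_2,\dots,u_{n-1}$ and $c_{m+1},\dots,c_{m+n-1}$; on the right, the coefficient of $X^{m+n-1}$ in $L(X^m) = \sum_{k\ge 1}u_kX^{mk}$ is $0$ unless $m\mid n-1$, in which case it is $u_{(n-1)/m+1}$, which is already known since $(n-1)/m + 1 < n$ when $m\ge 2$. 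Thus at order $X^{m+n-1}$ the unknown $u_n$ satisfies an equation of the form $m\,u_n = (\text{a quantity depending only on }u_2,\dots,u_{n-1}\text{ and the }c_j)$, which has a unique solution in $K'$ because $m$ is invertible there --- this is exactly where $\Char K \nmid m$ is used. This pins down $L$ (given $u_1 = 1$), and $\sigma\circ L$ is then a series with linear leading term conjugating the original $f$ to $X^m$.

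I do not expect a genuine obstacle here: the construction is purely formal, so each $u_n$ is produced by a finite computation and there is no convergence issue in $K'[[X]]$; the only substantive ingredients are the unavoidable finite extension in the normalization step and the hypothesis $\Char K\nmid m$, which is precisely what permits dividing by $m$ at every stage of the recursion.
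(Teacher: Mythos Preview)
Your argument is correct: this is the standard construction of the formal B\"ottcher coordinate, and the recursion you describe goes through exactly as stated. The paper itself does not prove this lemma at all --- it merely quotes it from Jiang--Zieve \cite[Lemma 2.1]{JZ} --- so there is no in-paper proof to compare against. Your write-up supplies precisely the argument one would expect to find in the cited reference, with the two key points (the finite extension needed to extract an $(m-1)$-st root of the leading coefficient, and the invertibility of $m$ needed at each stage of the coefficient recursion) clearly isolated.
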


\begin{lem}\label{JZ2}(\cite[Lemma 3.1]{JZ})
Let $K$ be a field, let $m, n > 1$ be integers not divisible by
$\Char K$, and let $g \in K[[X]]$ have lowest-degree term of degree $n$. If $g$
has at least two terms then $\langle X^m, g \rangle$ is a free semigroup on two generators.  
\end{lem}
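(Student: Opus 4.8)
The plan is to prove that the natural surjection from the free semigroup on two letters onto $\langle X^m,g\rangle$ is injective: distinct nonempty words in the generators $f:=X^m$ and $g$ must give distinct elements of $K[[X]]$. I would do this by attaching to each word $w$ a numerical invariant read off from the power series $w(X)$ alone that recovers the innermost block of $w$, then stripping that block off by right-cancellation and recursing on a strictly shorter word.

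For the set-up, since $g$ has lowest-degree term $c_nX^n$ with $c_n\ne 0$ and at least one further nonzero term, write $g(Y)=c_nY^n(1+\varepsilon(Y))$ with $\varepsilon\in K[[Y]]$, $\varepsilon\ne 0$, $\varepsilon(0)=0$, and put $t:=\mathrm{ord}_0\varepsilon\ge 1$. For a nonempty word $w$ in $f$ and $g$ write $w(X)=C_wX^{d_w}(1+E_w(X))$ with $C_w\ne 0$, $d_w=\mathrm{ord}_0 w$, $E_w(0)=0$, and set $\nu(w):=\mathrm{ord}_0 E_w\in\ZZ_{>0}\cup\{\infty\}$ (so $\nu(w)=\infty$ exactly when $w(X)$ is a monomial). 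First I would record the composition laws $1+E_{f\circ w}=(1+E_w)^m$ and $1+E_{g\circ w}=(1+E_w)^n(1+\varepsilon(w(X)))$, together with $d_{f\circ w}=md_w$ and $d_{g\circ w}=nd_w$. Since $\Char K$ divides neither $m$ nor $n$, the term of order $\nu(w)$ in each of $(1+E_w)^m-1$ and $(1+E_w)^n-1$ is nonzero, so both have order $\nu(w)$; and $\mathrm{ord}_0\varepsilon(w(X))=t\,d_w$, with nonzero leading coefficient. Hence $\nu(f\circ w)=\nu(w)$, while $\nu(g\circ w)=\min(\nu(w),t\,d_w)$ unless $\nu(w)=t\,d_w$ and the two competing leading coefficients cancel.

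The heart of the matter is that this exceptional cancellation never occurs inside a word. If $w=\varphi_k\circ\cdots\circ\varphi_1$ involves $g$, let $\ell_0(w)\ge 0$ be the number of innermost letters equal to $f$ that precede the first occurrence of $g$. After applying those $f$'s one has a monomial, so the subsequent $g$ produces $\nu=t\,m^{\ell_0(w)}$ with no cancellation, one competing order being $\infty$; thereafter each further $f$ leaves $\nu$ unchanged, while each further $g$ is applied when the current degree $d$ satisfies $d\ge n\,m^{\ell_0(w)}$, so $t\,d>t\,m^{\ell_0(w)}=\nu$ and the minimum is attained strictly, again with no cancellation. Thus $\nu(w)=\infty$ exactly when $w$ is a power of $f$, and $\nu(w)=t\,m^{\ell_0(w)}$ otherwise. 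Since $m\ge 2$ and $t$ is a fixed positive integer, $\nu(w)$ together with $d_w$ determines whether $w$ is a power of $f$ and which one (via $d_w=m^k$), and when $w$ involves $g$ it determines $\ell_0(w)$, hence that the $\ell_0(w)+1$ innermost letters of $w$ are $f^{\ell_0(w)}$ followed by $g$. Writing $h:=g\circ f^{\ell_0(w)}$ and $w=w'\circ h$, the series $h$ has positive order and nonzero leading coefficient, hence is right-cancellable in $K[[X]]$, so $w'(X)$ is determined by $w(X)$; since $w'$ is a strictly shorter word, induction on word length then recovers $w$ from $w(X)$, which is exactly freeness on the generators $X^m$ and $g$.

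The main obstacle — and the only place where $n>1$ and $\Char K\nmid m,n$ are genuinely used — is the non-cancellation claim of the third paragraph: that the leading coefficients at degree $t\,d_w$ cannot collide when a $g$ is applied. Its resolution hinges on the observation that once the first $g$ has been applied the degree $d$ has grown past $m^{\ell_0(w)}$ (this is where $n>1$ enters), so $t\,d$ strictly exceeds the current $\nu$ and the equality of orders that a cancellation requires can no longer recur; the characteristic hypotheses keep the order-$\nu(w)$ terms of $(1+E_w)^m-1$ and $(1+E_w)^n-1$ from degenerating. Note that when $g$ has a single term ($\varepsilon\equiv 0$) the invariant $\nu$ is identically $\infty$ and carries no information, matching the fact that $\langle X^m,X^n\rangle$ is not free on two generators.
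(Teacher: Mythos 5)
The paper does not actually prove this lemma; it is quoted verbatim from Jiang--Zieve \cite[Lemma 3.1]{JZ}, so there is no in-paper argument to compare yours against. Judged on its own, your proof is correct and self-contained. The invariant $\nu(w)=\mathrm{ord}_0\bigl(w(X)/C_wX^{d_w}-1\bigr)$ is well chosen: the composition laws $1+E_{f\circ w}=(1+E_w)^m$ and $1+E_{g\circ w}=(1+E_w)^n\bigl(1+\varepsilon(w(X))\bigr)$ are right, the hypotheses $\Char K\nmid m,n$ enter exactly where you say (so that $(1+E_w)^m-1$ and $(1+E_w)^n-1$ keep order $\nu(w)$), and the hypothesis $n>1$ enters exactly where you say: after the first $g$ the degree is at least $n\,m^{\ell_0}>m^{\ell_0}$ and only grows, so every later $g$ contributes its $\varepsilon$-term at order $t\,d>t\,m^{\ell_0}=\nu$, ruling out the cancellation scenario; hence $\nu(w)=t\,m^{\ell_0(w)}$ for every word containing $g$ and $\nu(w)=\infty$ exactly for powers of $X^m$. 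Reading $\ell_0(w)$ (or the power of $f$) off the series, peeling off $h=g\circ f^{\ell_0}$ by right-cancellation (valid since a nonzero series of positive order is right-cancellable for composition over a field), and inducting on length does recover the word; the only bookkeeping worth making explicit is that after cancellation the two remaining factors are simultaneously empty or simultaneously nonempty, which follows since every nonempty word has order at least $\min(m,n)\ge 2$ while the identity has order $1$. Your closing remark that the invariant collapses when $\varepsilon\equiv 0$, matching the non-freeness of $\langle X^m,X^n\rangle$, is also apt.
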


\begin{lem}\label{JZ3}(\cite[Lemma 3.2]{JZ})
  Let $K$ be a field, let $m, n > 1$ be integers, and let $\alpha \in K^*$ have
infinite order. Then $\langle X^m, \alpha X^n\rangle$ is a free semigroup on two generators.  
\end{lem}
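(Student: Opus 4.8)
The plan is to reduce the statement to a bookkeeping fact about monomials and then close it with two short observations. Write $f = X^m$, $g = \alpha X^n$. Since $f$ and $g$ are monomials, every composition $W = \varphi_k \circ \cdots \circ \varphi_1$ with $\varphi_i \in \{f,g\}$ is a monomial, and an induction on length shows $W(X) = \alpha^{a_W} X^{d_W}$ with $a_W \ge 0$, $d_W \ge 1$, $d_W = \prod_i \deg \varphi_i$, and $a_W$ governed by the "innermost peeling" rule: if $W = V \circ \varphi_1$ then $(a_W, d_W) = (a_V,\, m\,d_V)$ when $\varphi_1 = f$ and $(a_W, d_W) = (a_V + d_V,\, n\,d_V)$ when $\varphi_1 = g$. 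Because $\alpha$ has infinite order, $\alpha^a = \alpha^{a'}$ implies $a = a'$, so $W$ (as a rational function) determines and is determined by the pair $(a_W, d_W)$; hence $\langle f,g\rangle$ is free on $\{f,g\}$ precisely when $W \mapsto (a_W, d_W)$ is injective on words. The one extra input I would isolate is the elementary inequality $0 \le a_W < d_W$, which holds for every word by induction on length (using $n \ge 2$ in the $g$-step).

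Assume first that $m \ge n$. Then I would prove injectivity of $W \mapsto (a_W, d_W)$ by induction on $d_W$. If $W, W'$ are words with the same pair and $d_W = 1$, both are empty; otherwise each has an innermost letter. If the innermost letters agree, peeling that letter off (via the recursion) gives words $V, V'$ with the same pair and with $d_V < d_W$, so $V = V'$ by induction and thus $W = W'$. The key claim is that the innermost letters cannot differ: if $W = V \circ f$ and $W' = V' \circ g$ then $d_W = m\,d_V = n\,d_{V'}$, so $a_W = a_V < d_V = d_W/m$ while $a_{W'} = a_{V'} + d_{V'} \ge d_{V'} = d_W/n$; since $m \ge n$ this forces $d_W/n \le a_W = a_{W'} < d_W/m \le d_W/n$, a contradiction. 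That settles the case $m \ge n$.

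To handle $m < n$, I would normalize by conjugation. Choose $\beta$ in $\overline{K}$ with $\beta^{n-1} = \alpha^{-1}$ and put $L(X) = \beta X$. Conjugation by $L$ is a semigroup automorphism of $\overline{K}(X)$ under composition, and a one-line computation gives $L^{-1} \circ f \circ L = \beta^{m-1} X^m$ and $L^{-1} \circ g \circ L = X^n$. Therefore $\langle f,g\rangle$ is free iff $\langle \beta^{m-1} X^m,\, X^n \rangle$ is free. In the latter semigroup the clean monomial has degree $n$ and the monomial carrying the infinite-order coefficient $\beta^{m-1}$ (still of infinite order, since $\alpha$ is) has the smaller degree $m$; relabelling, this is exactly the situation already handled, because the clean monomial now has the larger exponent. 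Finally, freeness of a two-generator semigroup of rational functions does not depend on the field of definition — a relation among the generators is an identity of rational functions — so freeness over $\overline{K}$ yields freeness over $K$.

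The hard part is really just finding this two-step normalization: the obvious induction, comparing \emph{outermost} letters, gets stuck exactly when $\gcd(m,n) = 1$, since then a mismatch of outermost letters is consistent both mod $m$ and mod $n$ and no contradiction is visible; switching to the innermost letter turns the obstruction into the interval inequality above, and the conjugation $X \mapsto \beta X$ reduces to the regime ($m \ge n$) in which that interval is empty. For context I would also note that Theorem \ref{common} (equivalently Proposition \ref{free}) immediately gives a positive integer $j$ with $\langle f^j, g^j \rangle$ free, since $\Prep(X^m) \ne \Prep(\alpha X^n)$ when $\alpha$ is not a root of unity; the elementary argument above is what upgrades this to $j = 1$.
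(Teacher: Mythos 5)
Your argument is correct, and it differs from the paper in the most basic way possible: the paper does not prove this statement at all, but imports it as a black box from Jiang and Zieve \cite[Lemma 3.2]{JZ}, so your write-up supplies a self-contained elementary proof where the paper has only a citation. The bookkeeping checks out: every word is $\alpha^{a_W}X^{d_W}$ with the stated innermost-peeling recursion, the inequality $0\le a_W<d_W$ follows by induction (the $g$-step needing $n\ge 2$), and when $m\ge n$ the comparison of innermost letters pins $a_W$ into the empty interval $[d_W/n,\,d_W/m)$, so the map from words to pairs $(a_W,d_W)$ is injective, which by the infinite order of $\alpha$ is exactly freeness. The normalization for $m<n$ is also sound: with $\beta^{n-1}=\alpha^{-1}$ one gets $L^{-1}\circ g\circ L=X^n$ and $L^{-1}\circ f\circ L=\beta^{m-1}X^m$, and $\beta^{m-1}$ has infinite order because $m-1\ge 1$ and $\beta^{n-1}=\alpha^{-1}$ forces $\beta$ to have infinite order (this one-line justification is worth writing out, as it is the only place the hypothesis $m>1$ enters the second case); freeness is preserved under conjugation and is insensitive to enlarging the base field, since a relation is an identity of rational functions. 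It is worth noting that your conjugation step is the same trick the paper itself uses in its proof of Lemma \ref{4}, where $L=\alpha^{-1/(m-1)}X$ reduces $\langle \alpha X^m,\alpha^m X^m\rangle$ to the monomial case, so your proof fits naturally alongside the paper's methods; what it buys is independence from the unpublished preprint \cite{JZ} for this particular ingredient of Theorems \ref{poly1} and \ref{poly2} (though the paper still needs \cite[Lemmas 2.1 and 3.1]{JZ} elsewhere). Your closing remark is also accurate: Theorem \ref{common} only yields freeness of $\langle f^j,g^j\rangle$ for some $j$, and the point of the elementary computation is precisely to get $j=1$.
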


Note that Lemma \ref{JZ3} is not true when $n=1$; for example if $f(X)
= X^2$ and $g(X) = 2x$, then $g^2\circ f = f \circ g$, but we still have the
following.

\begin{lem}\label{4}
  Let $K$ be a field and let $m$ be a positive integer that is not
  divisible by $\Char K$.  Then for any
  $\alpha \in K^*$ of infinite order, the semigroup $\langle X^m \circ
  \alpha X, \alpha X \circ X^m\rangle$ is a free semigroup on two generators.
  \end{lem}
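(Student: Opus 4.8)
The plan is, first, to write the two generators explicitly. With the convention that $\varphi\circ\psi$ means ``apply $\psi$, then $\varphi$'', the map $X^m\circ\alpha X$ sends $X\mapsto(\alpha X)^m=\alpha^m X^m$, and $\alpha X\circ X^m$ sends $X\mapsto\alpha X^m$; so the claim is that $\langle\,\alpha^m X^m,\ \alpha X^m\,\rangle$ is free on these two generators. (We take $m\ge 2$; for $m=1$ the two maps coincide. For $m\ge 2$ they are genuinely distinct, since $\alpha^m=\alpha$ would force $\alpha^{m-1}=1$, contradicting that $\alpha$ has infinite order.) Both generators have the form $cX^m$, and $(c_1X^m)\circ(c_2X^m)=c_1c_2^m X^{m^2}$; iterating, a word $\varphi_k\circ\cdots\circ\varphi_1$ with $\varphi_i=c_iX^m$ equals $\bigl(\prod_{i=1}^k c_i^{m^{k-i}}\bigr)X^{m^k}$.

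The slick way to finish is to conjugate into the range of Lemma~\ref{JZ3}. Work over the finite extension $K'=K(\beta)$, where $\beta$ is a root of $Z^{m-1}=\alpha^{-1}$; a semigroup relation valid over $K$ is in particular valid over $K'$, so freeness over $K'$ suffices. Conjugation by $\sigma(X)=\beta X$ is a semigroup isomorphism from $\langle\alpha^m X^m,\alpha X^m\rangle$ onto $\langle\sigma^{-1}(\alpha^m X^m)\sigma,\ \sigma^{-1}(\alpha X^m)\sigma\rangle$ taking generators to generators, so it preserves the property of being free on two generators. One computes $\sigma^{-1}\circ(cX^m)\circ\sigma=c\beta^{\,m-1}X^m=c\alpha^{-1}X^m$; hence $\alpha X^m$ is carried to $X^m$ and $\alpha^m X^m$ to $\alpha^{m-1}X^m$. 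Since $m\ge 2$ and $\alpha^{m-1}$ still has infinite order in $(K')^*$, Lemma~\ref{JZ3} (applied with both exponents equal to $m$ and with $\alpha^{m-1}$ playing the role of its $\alpha$) gives that $\langle X^m,\alpha^{m-1}X^m\rangle$ is free on two generators, which finishes the proof.

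Alternatively, one can avoid the base change and argue directly from the word formula. Writing the constant of the $i$-th generator as $\alpha^{d_i}$ with $d_i\in\{1,m\}$, a length-$k$ word equals $\alpha^{E}X^{m^k}$ with $E=\sum_{i=1}^k d_i m^{k-i}$; putting $d_i=1+(m-1)\epsilon_i$ with $\epsilon_i\in\{0,1\}$ gives $E=\tfrac{m^k-1}{m-1}+(m-1)\sum_{i=1}^k\epsilon_i m^{k-i}$. If two words agree as functions then, since $\alpha$ has infinite order, comparing $\alpha^E X^{m^k}$ forces $m^k=m^{k'}$, hence $k=k'$ (as $m\ge 2$), and then $E=E'$, hence $\sum_i\epsilon_i m^{k-i}=\sum_i\epsilon_i' m^{k-i}$; as $0$ and $1$ are legitimate base-$m$ digits, this integer together with $k$ recovers $(\epsilon_1,\dots,\epsilon_k)$, i.e.\ the word. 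Either way the argument is essentially routine; the only points that demand care are the composition convention (so that the two generators $\alpha^m X^m$ and $\alpha X^m$ are identified correctly) and the observation that conjugation --- after the harmless finite extension needed to adjoin a root of $Z^{m-1}=\alpha^{-1}$ --- transports ``free on two generators'' along with the two generators.
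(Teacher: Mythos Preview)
Your main argument is correct and is essentially the same as the paper's: compute the two generators as $\alpha^m X^m$ and $\alpha X^m$, then conjugate by the linear map $\beta X$ with $\beta^{m-1}=\alpha^{-1}$ (the paper simply writes $L=\alpha^{-1/(m-1)}X$) to transform the pair into $X^m$ and $\alpha^{m-1}X^m$, and invoke Lemma~\ref{JZ3}. You are a bit more careful than the paper in noting that one may need a finite extension to adjoin $\beta$ and in explicitly excluding $m=1$; your alternative base-$m$ digit argument is a pleasant self-contained bonus, but the core route is the same.
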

  \begin{proof}
    We have $X^m \circ \alpha X = \alpha^m X^m$ and $\alpha X \circ X^m
    = \alpha X^m$
    Let $L = \alpha^{-1/(m-1)} X$.  Then $L^{-1} \circ  \alpha X^m \circ
    L = X^m$ and  $L^{-1} \circ  \alpha^m X^m \circ
    L = \alpha^{m-1} X^m$, and $\langle X^m, \alpha^{m-1} X^m \rangle$
    is a free semigroup on two generators by Lemma \ref{JZ3}.  
    \end{proof}

    We are now ready to prove Theorem \ref{poly1-gen}.

    \begin{proof}
      Again we may assume that every element of $\cS$ is nonconstant,
      because of Lemma \ref{constant}.  
      If all the elements of $\cS$ have degree 1, then this is
      \cite[Theorem 1]{Ok1}.  We choose coordinates so that
      $\alpha = 0$ and write each element of $\cS$ as a formal power
      series in $K[[X]]$.  After passing to a finite extension and
      conjugating by some $L \in K[[X]]$, there is some $f \in \cS$
      and some $L \in K[[X]]$ where the degree of the term of lowest
      degree of $L$ is 1 such that $L^{-1} f L = X^m$ where $m > 1$
      and $(\Char K) \nmid m$, by Lemma \ref{JZ1}.  If there is some
      $g \in K[[X]]$ with $e_g(\alpha) = n > 1$ such that $L^{-1} g L$
      is not equal to $\xi X^n$ for $\xi$ a root of unity, then
      $\langle f, g \rangle$ is a free semigroup on two generators by Lemmas \ref{JZ2} and
      \ref{JZ3}.  Similarly, if there is some $g \in K[[X]]$ with
      $e_g(\alpha) = 1$ such that $L^{-1} g L$ is not equal to
      $\alpha X$ for some $\alpha \in K^*$, then
      $\langle f, gf \rangle$ is a free semigroup on two generators by Lemma \ref{JZ2}; likewise, if there
      is some $g \in K[[X]]$ with $e_g(\alpha) = 1$ such that
      $L^{-1} g L$ is equal to $\alpha X$ where $\alpha \in K^*$ is
      not a root of unity, then $\langle fg, gf \rangle$ is a free semigroup on two generators by
      Lemma \ref{4}.

Thus, we are reduced to showing that a semigroup of the form $\langle
\xi_r X^{e_1}, \dots, \xi_r X^{e_r} \rangle$, where each $\xi_i$ is a
root of unity, has
polynomially bounded growth.  Let $S^{\leq t}$ be the set of words of length
$t$ or less in $\langle
\xi_r X^{e_1}, \dots, \xi_r X^{e_r} \rangle$.  Then the number of
possible degrees of elements of $S_t$ is bounded by $O(t^r)$.  Since
the group generated by  $\{ \xi_1, \dots, \xi_r \}$ is finite, this means that
the number of elements in $S_t$ is also bounded by $O(t^r)$ and we
are done.

\end{proof}

We are now ready to prove Theorem \ref{poly2} in a slightly more
general form.

\begin{thm}\label{poly2-gen}
Let $f, g \in K(x)$ be nonconstant rational functions with a fixed point $\alpha$ such that
$e_f(\alpha)$ and $e_g(\alpha)$ are both greater than 1 and not
divisible by
$\Char K$.  If $\Prep(f) \not= \Prep(g)$, then $\langle f, g \rangle$
is a free semigroup on two generators.
  
 \end{thm}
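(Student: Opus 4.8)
The plan is to reduce, via a Böttcher normalization at the common fixed point $\alpha$, to the Jiang--Zieve lemmas \ref{JZ1}--\ref{JZ3} together with Theorem \ref{common}. First observe that a degree-$1$ rational map is unramified, so the hypotheses $e_f(\alpha) > 1$ and $e_g(\alpha) > 1$ force $\deg f > 1$ and $\deg g > 1$; hence $f$ and $g$ are both polarized by $\cO_{\bP^1}(1)$ on $\bP^1$, so Theorem \ref{common} is available for the pair $(f,g)$.

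Next I would pick a coordinate $X$ on $\bP^1$ over $K$ with $\alpha = 0$, so that $f, g \in K[[X]]$ with $f(0) = g(0) = 0$ and orders $m := e_f(\alpha) > 1$ and $n := e_g(\alpha) > 1$, neither divisible by $\Char K$. By Lemma \ref{JZ1}, after a finite extension $K'/K$ there is an $L \in K'[[X]]$ with linear leading term such that $L^{-1} f L = X^m$. Conjugation by $L$ is an automorphism of the monoid, under composition, of power series in $K'[[X]]$ vanishing at $0$, and the expansion-at-$0$ map embeds the rational functions fixing $\alpha$ into this monoid compatibly with composition; hence $\langle f, g \rangle$ is a free semigroup on two generators if and only if $\langle X^m, \tilde g \rangle$ is, where $\tilde g := L^{-1} g L \in K'[[X]]$ again has order $n > 1$.

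Now split on the shape of $\tilde g$. If $\tilde g$ has at least two nonzero terms, Lemma \ref{JZ2} gives that $\langle X^m, \tilde g \rangle$ is free; if $\tilde g = c X^n$ with $c \in (K')^*$ of infinite multiplicative order, Lemma \ref{JZ3} gives that $\langle X^m, c X^n \rangle$ is free. In either case $\langle f, g \rangle$ is free and we are done, so it suffices to exclude the remaining case $\tilde g = \xi X^n$ with $\xi$ a root of unity, of order $k$ say. There, every length-$t$ word in $X^m$ and $\xi X^n$ evaluates to $\xi^e X^{m^a n^b}$ with $a + b = t$, so it is determined by the pair $(m^a n^b, e \bmod k)$, of which there are $O(t^2)$; thus $\langle X^m, \xi X^n \rangle$, and therefore $\langle f, g \rangle$, has polynomially bounded growth. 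Consequently no $\langle f^j, g^j \rangle$ can be free on two generators, since otherwise the $2^t$ length-$t$ words in $f^j$ and $g^j$ would be $2^t$ distinct elements of $\langle f, g \rangle$ realized by words of length $jt$ in $f$ and $g$, forcing exponential growth. But $\Prep(f) \ne \Prep(g)$ together with Theorem \ref{common} does produce a $j$ with $\langle f^j, g^j \rangle$ free --- a contradiction. Hence this case cannot occur, and $\langle f, g \rangle$ is free.

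The step I expect to be the main obstacle is the handling of the last case: one must check carefully that $\tilde g = \xi X^n$ forces polynomially bounded growth of $\langle f, g \rangle$ and that this rules out freeness of \emph{every} $\langle f^j, g^j \rangle$, so that Theorem \ref{common} applies as a contrapositive. The initial bookkeeping --- passing into $K'[[X]]$, comparing freeness before and after conjugation, and checking that $\tilde g$ still has order $n$ --- is routine but worth stating cleanly, as is the (harmless) remark that $\Prep(f)$ and $\Prep(g)$ are unchanged upon replacing $K$ by a finitely generated subfield over which $f$, $g$, and $\alpha$ are defined.
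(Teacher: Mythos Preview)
Your proposal is correct and follows essentially the same approach as the paper: B\"ottcher-normalize via Lemma~\ref{JZ1}, use Lemmas~\ref{JZ2} and~\ref{JZ3} to handle all cases except $\tilde g = \xi X^n$ with $\xi$ a root of unity, observe that the remaining case has polynomially bounded growth, and invoke Theorem~\ref{common} to finish. You supply a few details the paper leaves implicit (that $e_f(\alpha),e_g(\alpha)>1$ forces $\deg f,\deg g>1$, so Theorem~\ref{common} applies; and the explicit reason polynomial growth precludes a free subsemigroup on two generators), but the structure and key lemmas are the same.
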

  
\begin{proof}
  By Lemma \ref{JZ1}, after taking a finite extension there is a
  $L \in K[[X]]$ such that $L^{-1} f L = X^m$ for $m >1$ and not
  divisible by
  $\Char K$.  By Lemmas \ref{JZ2} and \ref{JZ3}, the semigroup
  $\langle f, g \rangle$ must be free unless $L g L^{-1} = \xi X^n$
  for $\xi$ a root of unity.  As in the proof of Theorem
  \ref{poly1-gen}, the semigroup $\langle X^m, \xi X^n \rangle$ has
  polynomially bounded growth, so we see that $\langle f, g \rangle$
  is either free or has polynomially bounded growth.

 If $\Prep(f) \not= \Prep(g)$, then $\langle f, g \rangle$ contains a
free group on two elements by Theorem \ref{common} and thus does not
have polynomially bounded growth, so $\langle f, g \rangle$ must be free. 
\end {proof}
\section{Proof of Theorem \ref{av-thm}}\label{abelian-v}

We now prove Theorem \ref{av-thm}.  

\begin{proof}[Proof of Theorem \ref{av-thm}]
  By \cite[Theorem 2]{Iitaka}, any morphism $f: A \lra A$ can be
  written as $\psi +t_a$ where $\psi$ is a group endomorphism of $A$
  and $t_a$ is the translation-by-$a$ map for $a \in A$.  Thus, we can
  write $\cS$ as
  $\langle \psi_1 + t_{a_1}, \dots, \psi_m + t_{a_m} \rangle$ where
  each $\psi_i$ is a group endomorphism of $A$.  Let $K$ be a finitely
  generated field such that $A$ and all the $\psi_i$ are defined over $K$ and
  such that all the $a_i$ are in $A(K)$.  After passing to a finite
  extension, we may assume that $A(K)$ is Zariski dense in $A$.

  For any group abelian $B$, we let $T(B)$ denote the set of affine
  maps on $B$ -- that is, the set of maps from $G$ to itself that are
  compositions of group homomorphisms and translations.  Since $A(K)$
  is Zariski dense in $A$, each element of $\cS$ is uniquely
  determined by its action on $A(K)$, so there is a natural embedding
  $\iota: \cS \lra T(A(K))$.  By
  \cite{Neron}, the group $A(K)$ is finitely generated; we let $n$
  denote its free rank.  We may write
  $A(K) = A(K)_{\tors} \bigoplus G$, for some finitely generated free
  abelian group $G$ of rank $n$.  Then the projection map $\pi: A(K) \lra G$ gives rise
  to a natural map $\theta: T(A(K)) \lra T(G)$ such that
  \begin{equation}\label{inverse}
    |\theta^{-1}(h)|
    \leq (n |A(K)_{\tors}|)^{|A(K)_{\tors}|}
  \end{equation}
  for all $h \in T(G)$.  
  Let $\cS'$ be the image of
  $\cS$ under $\theta \circ \iota$.  Certainly, $\cS$ will contain a
  nonabelain free semigroup whenever $\cS'$ does; because of
  \eqref{inverse}, $\cS$ must also have polynomial growth whenever
  $\cS'$ does.  

  Since each element of $\cS$ is a finite morphism, each element of $\cS'$ must
  be finite-to-one.  Tensoring with $\bQ$, we thus see that $\cS'$ is
  isomorphic to a cancellative semigroup of $\Aff_n(\bQ)$, the group
  of invertible affine linear maps on an $n$-dimensional vector space over $\bQ$.
  Since $\Aff_n(\bQ)$ embeds into $\GL_{n+1}(\bQ)$, we may apply
  \cite[Theorem 1]{Ok1} to conclude that either $\cS'$ has
  polynomially bounded growth or $\cS'$ contains a nonabelian free semigroup, and our proof is complete.
\end{proof}

\begin{remark}
  The hypothesis that the morphisms $f:A \lra A$ are finite is
  necessary.  There are examples of semigroups of $3 \times 3$
  matrices with integer coefficients having intermediate growth in \cite{Ok2}.
  Thus, for any elliptic curve $E$, the endomorphism ring
  $\End(E \times E \times E)$ contains multiplicative semigroups of
  intermediate growth.
\end{remark}

\begin{cor}\label{all-curves}
  Let $C$ be an irreducible curve over $\bC$ and let $\cS$ be a finitely
  generated semigroup of morphisms from $C$ to itself.  Then
  either $\cS$ has polynomially bounded growth or $\cS$ contains a
  nonabelian free semigroup.
 \end{cor}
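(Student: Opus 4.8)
The plan is to pass from $C$ to a smooth projective model $\widetilde C$, dispose of constant maps via Lemma \ref{constant}, and then invoke Theorem \ref{rational} when $\widetilde C\cong\bP^1$, Theorem \ref{av-thm} when $\widetilde C$ is an elliptic curve, and the finiteness of the automorphism group when $\widetilde C$ has genus at least $2$.

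First I would remove the constant maps. Fix a finite generating set $S$ for $\cS$; only finitely many elements of $S$ are constant, and adjoining a constant map to a generating set changes the growth function by at most a factor of $2$ by Lemma \ref{constant} and cannot destroy a nonabelian free subsemigroup. Iterating over the constant generators, it suffices to prove the corollary for the subsemigroup $\cS'\subseteq\cS$ generated by the nonconstant generators; every element of $\cS'$ is then a nonconstant, hence finite, morphism from $C$ to itself.

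Next I would replace $C$ by the smooth projective curve $\widetilde C$ obtained by normalizing $C$ and completing. Each nonconstant $f\colon C\lra C$ factors through the normalization and then extends over the finitely many missing points to a unique nonconstant morphism $\widetilde f\colon\widetilde C\lra\widetilde C$, and $f\mapsto\widetilde f$ is a semigroup homomorphism; it is injective because the normalization--completion map is dominant and $C$ is reduced, so distinct self-morphisms of $C$ stay distinct after precomposition with it. Hence $\cS'$ embeds as a finitely generated semigroup of nonconstant self-morphisms of $\widetilde C$, and it suffices to treat that image.

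Finally I would split on the genus $g$ of $\widetilde C$. If $g\ge 2$, the Riemann--Hurwitz formula $2g-2=(\deg\widetilde f)(2g-2)+\deg R$ with $\deg R\ge 0$ and $2g-2>0$ forces $\deg\widetilde f=1$, so every nonconstant self-morphism of $\widetilde C$ is an automorphism; since $\operatorname{Aut}(\widetilde C)$ is finite by Hurwitz's theorem, $\cS'$ is finite and trivially has polynomially bounded growth. If $g=1$, then $\widetilde C$ is an elliptic curve over $\bC$ and Theorem \ref{av-thm} gives the dichotomy. If $g=0$, then $\widetilde C\cong\bP^1$, whose nonconstant self-morphisms are exactly the rational functions in $\bC(x)$, and Theorem \ref{rational} gives the dichotomy. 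In every case $\cS'$, and therefore $\cS$, either has polynomially bounded growth or contains a nonabelian free semigroup. The only real work lies in the two reduction steps; once we are on a smooth projective curve the conclusion is immediate from the theorems already proved together with the classical rigidity of curves of genus at least $2$, so I do not expect a genuine obstacle.
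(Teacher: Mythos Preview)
Your proposal is correct and follows essentially the same route as the paper: reduce to nonconstant maps via Lemma~\ref{constant}, pass to the normalization of the projective closure, and then split on the genus, invoking Theorem~\ref{rational} for $g=0$, Theorem~\ref{av-thm} for $g=1$, and finiteness of the automorphism group for $g\ge 2$. You supply a bit more detail than the paper does (injectivity of the passage to $\widetilde C$, the Riemann--Hurwitz computation, and the observation that nonconstant maps on an elliptic curve are automatically finite so that Theorem~\ref{av-thm} applies), but the argument is the same.
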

 \begin{proof}
   By Lemma \ref{constant}, we may assume that every element of $\cS$
   is nonconstant.  Any morphism $f: C \lra C$ extends to a morphism
   ${\tilde f}: C' \lra C'$ for $C'$ the normalization of the
   projective closure of $C$.  Hence we may assume that $C$ is
   projective and nonsingular.  If $C$ has genus greater than 1, then
   $\cS$ must be finite (see \cite[Ex. IV.5.2]{Har}, for example), so
   we may assume that $C$ is isomorphic either to $\bP^1$ or to an elliptic curve.
   Applying Theorems \ref{rational} and \ref{av-thm} then gives the
   desired conclusion, since every nonconstant map on an elliptic curve
   is finite.
  \end{proof}
\section{Further directions}\label{further-sec}

We close with some general questions. In all of these questions, $K$
will be a field of arbitrary characteristic. 

\begin{quest}\label{1}
Are there rational functions $f, g\in K(X)$ of degree greater than
one such that $\Prep(f) \not= \Prep(g)$ and $\langle f, g \rangle$ is
not a free semigroup on two generators?
\end{quest}

One might also ask for something weaker, namely that there is a $j$
depending only on $K$ such that $\langle f^j, g^j \rangle$ must be
free whenever $\Prep(f) \not= \Prep(g)$.  This might be thought of as
analogous to the uniform version of the Tits alternative proved by
Breuillard and Gelander \cite{EffectiveTits}.  Recent work of DeMarco,
Krieger, and Ye \cite{DKY} suggests it may be possible here to use
quantitative equidistribution techniques to get good
uniform bounds on $\lim \inf |h_f - h_g|$ where $h_f$ and $h_g$ are the
canonical heights associated to $f$ and $g$ as in Section \ref{height-section}, at
least in the case of number fields (see also \cite{FRL} and
\cite{PST}).  This might allow for more
precision in the conclusion of Proposition \ref{free}. 

\begin{quest}
Let $V$ be a projective variety, let $\CL$ be an ample line bundle on $V$, and
let $\cS$ be a finitely generated semigroup of morphisms $f$ that are
polarized by $\CL$.  Is it true that $\cS$ must either have polynomially
bounded growth or contain a nonabelian free semigroup?
 \end{quest}

 It might also be natural to ask for a version of the Tits alternative
 for semigroups of polarized maps that says something about the
 structure of the semigroups rather than the growth.  For example, one
 might ask if it is true that any finitely generated semigroup of
 morphisms polarized by the same line bundle must contain either a
 nilpotent subsemigroup of finite index or a free subsemigroup on two
 generators (there is a notion of nilpotence for semigroups due to
 Malcev \cite{Mal53}).  Grigorchuk \cite{Grig} has shown that finitely
 generated cancellative semigroups have polynomially bounded growth if and only if they have a group of left quotients with a
 nilpotent subgroup of finite index; this extends well-known work of
 Gromov \cite{Gro} from the group setting to the cancellative
 semigroup setting.  It follows immediately that any cancellative
 finitely generated semigroup of rational functions contains either a
 nilpotent subsemigroup of finite index or a free semigroup on two
 generators.  Thus, in the case of cancellative subgroups of rational
 functions over $\bC$, we do have a natural structural analog of the
 Tits alternative for linear groups.

 Semigroups of polarized morphisms are not cancellative in general,
 however, as noted in Section \ref{prelim}.  On the other hand, we can
 show that a finitely generated semigroup of polynomials in $\bC[z]$
 contains either a nilpotent subsemigroup of finite index or a
 nonabelian free semigroup.  We can also show that if a finitely
 generated semigroup $\cS$ of polarized morphisms contains a nilpotent
 subsemigroup of finite index then all the elements of $\cS$ have the
 same set of preperiodic points.  Finally, the proof of Theorem
 \ref{rational} can be modified to show that if $\cS$ is a finitely
 generated semigroup of rational functions over $\bC$ that does not
 contain a Latt\`es map, then either $\cS$ contains a nonabelian free
 semigroup or there is an $N$ such that the degree map is at most
 $N$-to-1 on $\cS^+$ (see Remark \ref{cx}).  It is not clear to us,
 though, what the right kinds of general structural questions are in
 the non-cancellative setting of polarized morphisms.

\providecommand{\bysame}{\leavevmode\hbox to3em{\hrulefill}\thinspace}
\providecommand{\MR}{\relax\ifhmode\unskip\space\fi MR }
\providecommand{\MRhref}[2]{%
  \href{http://www.ams.org/mathscinet-getitem?mr=#1}{#2}
}
\providecommand{\href}[2]{#2}

\end{document}